    %
    %

\documentclass[a4paper, 11pt, reqno]{amsart} 
\usepackage[dvipsnames]{xcolor}
\usepackage[english]{babel}               

%

\usepackage[T1]{fontenc} 
\usepackage{lmodern} 
 \linespread{1.03} 







\usepackage[utf8]{inputenc} 
 

  \usepackage{amssymb}      
\usepackage{amsthm}
\theoremstyle{plain}
\newtheorem{theorem}{Theorem}[section]

\newtheorem*{proposition*}{Proposition}
\newtheorem*{theorem*}{Theorem}
\newtheorem{lemma}[theorem]{Lemma}
\newtheorem*{lemma*}{Lemma}

\newtheorem{corollary}[theorem]{Corollary}
\theoremstyle{definition}

\newtheorem{remark}[theorem]{Remark}
\usepackage{mathrsfs}      
 \usepackage{bm} 
\usepackage{tikz} 
\usepackage{enumerate}    
\usepackage{mathtools} 
\usepackage{textcomp}  
\mathtoolsset{showonlyrefs} 
\numberwithin{equation}{section}  


 \usepackage[margin=3cm]{geometry}     
\usepackage[babel]{microtype} 

\usepackage[symbol]{footmisc}
 \usepackage{comment}      
 \usepackage{appendix}

\usepackage{graphicx}  
\usepackage{float}                      

  %
\usepackage[
breaklinks   = true,        
pagebackref  = true,     
]{hyperref} 
\hypersetup{
	colorlinks=true,
	pdfpagemode=UseNone,
    citecolor=Green,
    linkcolor=blue,
    urlcolor=black,
	pdfstartview=FitW
}



\def\N{\mathbb{N}}

\def\R{\mathbb{R}}
\def\E{\mathbf{E}}
\def\P{\mathbf{P}} 
\def\Var{\mathbf{Var}}

\newcommand{\ind}[1]{\mathbf{1}_{ \{ #1  \}}}    

\DeclareMathOperator*{\argmin}{arg\,min}

\newcommand*{\dif}{\ensuremath{\mathop{}\!\mathrm{d}}}

 \renewcommand{\bar}[1]{\mkern 1mu\overline{\mkern-1mu #1\mkern-1mu}\mkern 1mu}
\renewcommand{\hat}[1]{\widehat{#1}}



\title[]{Minimum-Weight Path in a Sparse Erd\H{o}s--R\'{e}nyi Graph with Signed Weights}

\author[H. Ma]{Heng Ma} 

\address[Heng Ma]
{Faculty of Data and Decision Sciences, Technion - Israel's Institute of Technology, Haifa, 32000, Israel.}
\email{hengmamath at gmail dot com}
\urladdr{\url{https://hengmamath.github.io}}

\author[P. Maillard]{Pascal Maillard} 

\address[Pascal Maillard]{Institut de Mathématiques de Toulouse, CNRS UMR 5219, Université de Toulouse, 118 route de Narbonne, 31062 Toulouse Cedex 9, France and Institut Universitaire de France.}

\email{Pascal.Maillard at math dot univ-toulouse dot fr}
\urladdr{\url{https://www.math.univ-toulouse.fr/~pmaillar/}} 

\keywords{First passage percolation; Erd\H{o}s--R\'{e}nyi graph; branching random walk; extremal process}

\date{\today}


\begin{document}
 
\begin{abstract}
 We consider a sparse Erd\H{o}s--R\'{e}nyi graph $\mathcal{G}(n,\lambda/n)$ where each edge is independently assigned a random signed weight. For two uniformly chosen vertices, we study the joint distribution of the total weights and hopcounts (number of edges) of the near-minimum weight paths connecting them. Under certain conditions on the weight distribution, which ensure in particular that these paths are typically of positive weight, we prove that the point process formed by the rescaled pairs of total weight and hopcount, converges weakly to a Poisson point process with a random intensity. This random intensity is characterized by the product of two independent copies of the Biggins martingale limit of certain branching random walk. This result generalizes the work of Daly, Schulte, and Shneer (arXiv:2308.12149) from non-negative to signed weights.  
\end{abstract}

\maketitle


\section{Introduction}
Consider a sparse Erd\H{o}s--R\'{e}nyi graph $\mathcal{G}_{n}=\mathcal{G}_{n, {\lambda}/{n}}$ on the vertex set $[n]\coloneq \{1,2,\dots,n\}$, where each edge exists independently with probability ${\lambda}/{n}$ for some fixed constant $\lambda > 1$. Each edge $e$ in $\mathcal{G}_{n}$ is assigned an i.i.d. real-valued weight $X(e) \in \mathbb{R}$, distributed according to a random variable $X$. Let $\psi_X(t) \coloneq  \ln \mathbf{E}[e^{-tX}] \in (-\infty, \infty]$, $t \in \mathbb{R}$ denote the log-Laplace transform of $X$ and denote by $\mathcal{D} = \{t\in\R: \psi_X(t) < \infty\}$ its domain of definition, which is an interval. In this paper, we investigate the minimum total weight of a simple path\footnote{Henceforth, we use path to mean a simple path.} from vertex $1$ to vertex $n$, denoted by: 
\begin{equation}
 \mathbb{X}^{*}_{n}  \coloneq  \min\left\{  X(p) : p \in \mathcal{P}(1,n) \right\} ,
\end{equation}
Here, $\mathcal{P}(1, n)$ denotes the set of all paths from $1$ to $n$ in $\mathcal{G}_{n}$, and $X(p) \coloneq \sum_{e \in p} X(e)$ represents the total weight of the path $p$. By convention, if vertices $1$ and $n$ are disconnected, i.e., $\mathcal{P}(1,n)=\emptyset$, we define $\min \emptyset =\infty$. Let $\mathcal{P}^{*}(1,n) = \{ p \in \mathcal{P}(1,n): X(p)= \mathbb{X}^{*}_{n} \}$ be the set of paths achieving this minimal total weight. We further define:
\begin{equation}
   \mathbb{H}^{*}_{n}   \coloneq   \min\left\{  H(p) : p \in \mathcal{P}^{*}(1,n)     \right\} .
\end{equation}
where $H(p)$ denotes the hopcount (number of edges) of the path $p$. Our primary focus is on the asymptotic behavior of the pair $(\mathbb{X}^*_{n}, \mathbb{H}^{*}_{n})$.

Throughout this paper, we require that there exists $\alpha\in(0,\infty) \cap \operatorname{int}\mathcal D  $ (the interior of the interval $\mathcal D$), such that
\begin{equation}\label{def-alpha}
 \lambda \mathbf{E}[ e^{-\alpha X} ]   =  \lambda e^{\psi_{X}(\alpha)}=  1,\quad\text{ and }\quad \psi_X'(\alpha) < 0.
\end{equation}
Also, for simplicity, we assume that the distribution of $X$ is non-arithmetic:
\begin{equation}\label{eq-assumption-4}
 \P( X \in  d \mathbb{Z} ) \neq 1  \ , \  \forall  \,  d >0. 
\end{equation}
We indicate in Remark~\ref{rmk-lattice-1} how to generalize our results to the arithmetic case.

For non-negative weights, i.e., $\P(X \ge 0)=1$,  the condition $\lambda \P(X = 0) < 1$  guarantees the existence of such an $\alpha$ in \eqref{def-alpha}. In that case, Daly, Schulte, and Shneer \cite{DSS23} proved that the scaled variables
$
 (  \mathbb{X}^{*}_{n} - \frac{1}{\alpha} \ln n, \frac{ 1}{\sqrt{\beta \ln n}}(\mathbb{H}^{*}_{n } - \gamma \ln n) )
$
 converge jointly in distribution to  a non-degenerate limit,  where   the constants $\gamma$ and $\beta$ are given by 
 \begin{equation}
  \gamma \coloneq   \frac{1}{\alpha |\psi'_{X}(\alpha)|} \ \text{ and } \ \beta\coloneq  \frac{\psi_{X}''(\alpha)}{\alpha |\psi_{X}'(\alpha)|^3} = \gamma \frac{\psi_{X}''(\alpha)}{|\psi_{X}'(\alpha)|^2}.
 \end{equation}  
Moreover, using the method of moments, they further characterized the full extremal statistics of the minimal weight.  They considered   the \emph{extremal process}
\begin{equation}
\mathcal{E} _{n} \coloneq  \sum_{p \in \mathcal{P}(1,n)} \delta_{(  X(p)- \frac{1}{\alpha}\ln n ,\frac{ H(p)- \gamma \ln n}{\sqrt{\beta \ln n}}   )}
\end{equation}
and showed that, as $n \to \infty$ it converges  in distribution (in the space of random measures on $\R^2$ endowed with the vague topology) to a limiting process $\mathcal{E}_\infty$. This process is a Cox process, i.e. a Poisson point process 
with a random intensity $W\tilde{W} \Lambda(\dif x,\dif h)$, where 
 \begin{equation}
  \Lambda(\dif x,\dif h) \coloneq  \gamma \cdot \Big( \alpha e^{\alpha x} \dif x  \Big) \otimes \Big( \frac{1}{\sqrt{2 \pi}} e^{-\frac{h^2}{2}}   \dif h \Big)
 \end{equation}
  and   $W$,  $\tilde{W}$ are i.i.d. copies of the Biggins martingale limit with parameter $\alpha$ of the branching random walk with  offspring distribution $\mathrm{Poi}(\lambda)$ and displacement distribution $X$, see \eqref{eq-Biggins-martingale-limit} for a precise definition.

Our main theorem extends these results to cases with signed weights. 
We also slightly strengthen the notion of convergence, by considering 
$\mathcal{E}_\infty$ and $\mathcal{E}_n$, $n\in\N$ as random elements in the space of measures on $\R^{-\infty}\times \R^{-\infty,+\infty}$, where $\R^{-\infty} = \{-\infty\}\cup \R$ and $\R^{-\infty,+\infty} = \{-\infty\}\cup\R\cup\{+\infty\}$ are extensions of the real line endowed with the usual topology (in particular, the space $\R^{-\infty,\infty}$ is compact).

\begin{theorem}\label{thm1}
Under assumptions \eqref{def-alpha}
 and \eqref{eq-assumption-4},  the process $\mathcal{E}_n$ converges in law to $\mathcal{E}_\infty$ with respect to the vague topology on the space of measures on $\R^{-\infty}\times \R^{-\infty,+\infty}$. Moreover, $\P(\mathcal{E}_n = \emptyset) \to \P(\mathcal{E}_\infty = \emptyset)$ as $n\to\infty$.
\end{theorem}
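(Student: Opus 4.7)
\emph{Step 1 (local BRW and meet-in-the-middle).} The plan is to combine a meet-in-the-middle Poissonization on bounded windows of $\R\times\R$ with a sharp first-moment bound at the weight-$-\infty$ boundary, and a classical giant-component argument for the avoidance probability. I would first explore breadth-first from vertices $1$ and $n$ down to a slowly growing depth $k=k(n)$. A standard total-variation coupling identifies the two exploration trees, with their edge weights, with two independent Galton--Watson trees of Poisson$(\lambda)$ offspring carrying independent branching random walks with step law $X$. Under \eqref{def-alpha}, the truncated additive Biggins martingales
\begin{equation*}
 W^{(1)}_{n,k} \coloneq \sum_{|u|=k}e^{-\alpha V(u)}, \qquad W^{(n)}_{n,k} \coloneq \sum_{|u|=k}e^{-\alpha\tilde V(u)},
\end{equation*}
jointly converge in distribution to two independent copies $(W,\tilde W)$ of the Biggins limit in the iterated limit $n\to\infty$, $k\to\infty$. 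On the high-probability event that the two trees are disjoint, every $p\in\mathcal P(1,n)$ with $H(p)\ge 2k$ factors uniquely as $1\to u\to v\to n$ with $u,v$ at depth $k$ and a middle simple path of length $m = H(p)-2k$ and weight $S_m$ in the graph with the two trees deleted. Conditionally on the trees, the Laplace functional $\E[e^{-\mathcal E_n(f)}]$ for a compactly supported $f\in C_c(\R\times\R)$ is a Poisson-type expression; evaluating it via the local central limit theorem for $S_m$ in the $x$-variable (which applies by the non-arithmeticity \eqref{eq-assumption-4}) and the ordinary CLT for $(2k+m-\gamma\ln n)/\sqrt{\beta\ln n}$ in the $h$-variable, and factoring out $\sum_u e^{-\alpha V(u)}\sum_v e^{-\alpha\tilde V(v)}\to W\tilde W$, produces the Cox Laplace functional $\E[\exp(-W\tilde W\int(1-e^{-f})\,d\Lambda)]$.

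\emph{Step 2 (tightness at the boundary).} The main new ingredient compared to \cite{DSS23} is controlling mass escape to the weight-$-\infty$ end. I would prove
\begin{equation*}
 \E\bigl[\#\{p\in\mathcal P(1,n): X(p)\le\tfrac{1}{\alpha}\ln n - K\}\bigr] \le Ce^{-\alpha K}
\end{equation*}
uniformly in $n$. The expected number of paths of hopcount $h$ with weight at most $y$ is bounded by $(\lambda^h/n)\,\P(S_h\le y)$, where $S_h$ is the random walk with i.i.d.\ step distribution $X$. Exponentially tilting $S_h$ by $\alpha$ yields a walk with drift $\psi_X'(\alpha)<0$ and variance $\psi_X''(\alpha)$; the local central limit theorem for the tilted walk gives
\begin{equation*}
 \frac{\lambda^h}{n}\,\P\bigl(S_h\le\tfrac{1}{\alpha}\ln n - K\bigr) \lesssim \frac{e^{-\alpha K}}{\sqrt{h}}\exp\bigl(-\tfrac{(h-\gamma\ln n)^2}{2\beta\ln n}\bigr),
\end{equation*}
and summing over $h$ on the effective Gaussian window of width $\sqrt{\beta\ln n}$ around $\gamma\ln n$ produces the stated uniform bound. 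Together with the hopcount concentration from Step~1, this upgrades compact-window convergence to full vague convergence on $\R^{-\infty}\times\R^{-\infty,+\infty}$.

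\emph{Step 3 (avoidance, and main obstacle).} Finally, $\{\mathcal E_n = \emptyset\} = \{1\not\leftrightarrow n\text{ in }\mathcal G_n\}$, and the classical giant-component theorem for $\mathcal G(n,\lambda/n)$ with $\lambda>1$ gives $\P(\mathcal E_n = \emptyset)\to 1-(1-q)^2$ with $q$ the Poisson$(\lambda)$ extinction probability. The infinite total mass of $\Lambda$ forces $\{\mathcal E_\infty = \emptyset\} = \{W\tilde W = 0\}$ almost surely; Biggins' theorem under \eqref{def-alpha} identifies $\{W=0\}$ with extinction of the underlying Galton--Watson tree, so the independence of $W$ and $\tilde W$ yields $\P(\mathcal E_\infty = \emptyset) = 1-(1-q)^2$ as well. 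The principal difficulty of the whole argument is the sharp bound in Step~2: with signed weights there is no a priori lower bound $X\ge 0$ to tame paths of atypical hopcount, and the naive Chernoff estimate $\P(S_h\le y)\le\lambda^{-h}e^{\alpha y}$ (which sufficed in the non-negative case of \cite{DSS23}) is off by a factor of order $\sqrt{\ln n}$ after summing over $h$, so the full local CLT for the tilted walk is indispensable to make the bound uniform in $n$.
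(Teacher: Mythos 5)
Your high-level architecture — BFS exploration coupled to a $\mathrm{Poi}(\lambda)$ branching random walk, meet-in-the-middle decomposition $1\to u\to v\to n$, a Cox limit with intensity $W\tilde W\Lambda$, and the avoidance probability via Biggins' theorem — matches the paper's. But the proof as sketched has a gap precisely at the step that carries most of the weight, and it misidentifies where the signed-weight difficulty actually lives.

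The crucial assertion in Step~1 is that, conditionally on the two exploration trees, ``the Laplace functional $\E[e^{-\mathcal E_n(f)}]$ is a Poisson-type expression.'' That is the conclusion, not an observation: distinct middle paths $u\to v$ and $u'\to v'$ can share edges, so the contributing indicators are dependent, and the Laplace functional does not factorize. This is exactly what the paper proves, via a Chen--Stein argument (Lemma~\ref{Stein_method}) applied conditionally on $\mathcal S_{\le r_n}(1,n)$, with a dissociating neighborhood $\mathcal N_p$ of paths sharing an edge with $p$ outside the explored region. The first-moment cross terms are controlled in Lemma~\ref{lem-correlation-1}; the joint second moments $\E[I(p;R)I(p';R)]$ in Lemma~\ref{lem-correlation-2}. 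Your proposal never engages with this dependency structure, so it does not actually establish Poisson convergence.

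Relatedly, you present the first-moment boundary bound in Step~2 as ``the main new ingredient'' over \cite{DSS23}, and argue that a naive Chernoff bound sufficed for non-negative weights. Neither claim holds up. The bound $\E\,\mathcal E_n((-\infty,\tfrac1\alpha\ln n-K]\times\R)\lesssim e^{-\alpha K}$, uniformly in $n$, is essentially the renewal-function inequality $V(x)\le Ce^{\alpha x}$ of Lemma~\ref{lem-renewal-function}(i), which both this paper and \cite{DSS23} obtain from the two-sided key renewal theorem (Chernoff alone is not summable in $h$ even for $X\ge0$, so DSS did not rely on it either). The genuinely new difficulty created by signed weights is in the \emph{second moment}: when two paths overlap on a shared journey of length $\ell_i$, the relevant factor is $\lambda^{\ell_i}\E[e^{-2\alpha S_{\ell_i}}]$, and if $\psi_X(2\alpha)\ge\psi_X(\alpha)$ these untruncated terms diverge. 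The paper handles this by conditioning the Chen--Stein covariance terms on the good events $\mathsf{G}^2_n$ (paths of hopcount $\ge r_n$ from $1$ or $n$ have weight $\ge\frac{s^*}{2}H(p)$) and $\mathsf{G}^3_n$ (all path weights $>-\frac1{\alpha'}\ln n$), which truncate $S_j$ from below and restore summability. This required the variant of Chen--Stein with an extra indicator $\chi=\ind{\mathsf G_n}$, which is the modified Lemma~\ref{Stein_method}. Your proposal contains no analogue of these truncation events, so the dependency terms would be unbounded precisely in the new regime the theorem is meant to cover. Step~3 on the avoidance probability is fine and agrees with the paper modulo packaging (the paper extracts $\le$ from the point process convergence and $\ge$ from the local limit plus Lemma~\ref{lem:W equals 0}, rather than invoking the giant-component theorem).
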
 


\begin{corollary}
Under the assumptions of Theorem~\ref{thm1}, we have the convergence
 \begin{align}
  \lim_{n \to \infty}   \Big(\mathbb{X}^{*}_{n} -\frac{1}{\alpha} \ln n, \, & \frac{ \mathbb{H}^{*}_{n } - \gamma \ln n}{\sqrt{\beta \ln n}}\Big)   \mid \mathcal{P}(1,n) \neq \emptyset \\
  &   =   (X^{*}, H^{*})  \mid   W\tilde{W}>0 \  \text{ in distribution,} 
 \end{align} 
 where $ (X^{*}, H^{*}) = \argmin\left\{ x : (x,h) \in \mathcal{E}_\infty \right\}.$ 
\end{corollary}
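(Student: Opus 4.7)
The plan is to deduce the corollary from Theorem~\ref{thm1} via the continuous mapping theorem, applied to the ``leftmost atom'' functional $F$ that sends a point measure $\mu$ on $\R^{-\infty}\times\R^{-\infty,+\infty}$ to the pair $(x^\star,h^\star)$, where $x^\star$ is the infimum of the first coordinates of $\operatorname{supp}\mu$ and $h^\star$ is the second coordinate of an atom realising this infimum (with ties broken by choosing the smallest such $h^\star$). Trivially $\{\mathcal{P}(1,n)\ne\emptyset\}=\{\mathcal{E}_n\ne\emptyset\}$, and on this event one has $F(\mathcal{E}_n)=(\mathbb{X}^*_n-\alpha^{-1}\ln n,\,(\mathbb{H}^*_n-\gamma\ln n)/\sqrt{\beta\ln n})$, so the corollary reduces to establishing continuity of $F$ at $\mathcal{E}_\infty$ almost surely, together with the convergence of the conditioning event.

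To identify the limiting law I would use that, conditionally on $(W,\tilde W)$, the process $\mathcal{E}_\infty$ is Poisson with intensity $W\tilde W\cdot\Lambda$. Since $\Lambda((-\infty,a]\times\R)=\gamma e^{\alpha a}$ is finite for every $a\in\R$ but $\Lambda(\R^2)=\infty$, almost surely $\{\mathcal{E}_\infty=\emptyset\}=\{W\tilde W=0\}$, while on the complement $\mathcal{E}_\infty$ has a.s.\ a unique leftmost atom at a finite position $(X^*,H^*)\in\R^2$ by the absolute continuity of $\Lambda$. Combining this with the total-mass convergence $\P(\mathcal{E}_n=\emptyset)\to\P(\mathcal{E}_\infty=\emptyset)$ from Theorem~\ref{thm1} yields $\P(\mathcal{P}(1,n)\ne\emptyset)\to\P(W\tilde W>0)$, which matches the conditioning on the right-hand side of the corollary.

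The main technical step, and the one I expect to require the most care, is the continuity of $F$ at $\mathcal{E}_\infty$ almost surely on $\{W\tilde W>0\}$. Here the enlarged state space introduced in Theorem~\ref{thm1} is decisive: sets of the form $[-\infty,a]\times[-\infty,+\infty]$ are compact in $\R^{-\infty}\times\R^{-\infty,+\infty}$, so vague convergence directly controls the atom count on the left tail and rules out the usual failure mode of atoms escaping to $-\infty$. Concretely, if $\mu_n\to\mu$ vaguely in this space and $\mu$ has a unique leftmost atom at $(x^\star,h^\star)\in\R^2$, then for every sufficiently small $\varepsilon>0$ (chosen at a continuity point of $\mu$), $\mu_n([-\infty,x^\star-\varepsilon]\times[-\infty,+\infty])\to 0$ while the unique atom of $\mu_n$ in $[x^\star-\varepsilon,x^\star+\varepsilon]\times\R$ converges to $(x^\star,h^\star)$, forcing $F(\mu_n)\to F(\mu)$. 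Finite-$n$ ties in $\mathbb{X}^*_n$ are immaterial, since the limiting leftmost atom is a.s.\ unique and hence the tie-breaking convention for $\mathbb{H}^*_n$ does not affect the distributional limit. The continuous mapping theorem, combined with the conditioning reduction above, then delivers the stated convergence in distribution.
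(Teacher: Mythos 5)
Your proposal is correct and follows essentially the same route as the paper: apply the continuous mapping theorem to the leftmost-atom functional, establish its a.s.\ continuity at $\mathcal{E}_\infty$ on $\{W\tilde W>0\}$ by uniqueness of the minimizing atom together with control on left-tail sets (compact in the enlarged space), and pair this with $\P(\mathcal{P}(1,n)\ne\emptyset)=\P(\mathcal{E}_n\ne\emptyset)\to\P(\mathcal{E}_\infty\ne\emptyset)=\P(W\tilde W>0)$ to handle the conditioning. The only cosmetic difference is that the paper invokes Lemma~4.1 in Kallenberg for continuity of $\nu\mapsto\nu([-\infty,x]\times\R^{-\infty,+\infty})$, whereas you argue the atom-count convergence directly; both are valid.
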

\begin{proof}
We remark that the function $\nu\mapsto \argmin\left\{ x : (x,h) \in \nu \right\}$ is continuous $\P_{\mathcal{E}_\infty}(\cdot\,|\,W\tilde W > 0)$-almost surely, since, on the event $W\tilde W > 0$, the process $\mathcal{E}_\infty$ has a unique atom $(x,h)$ which minimizes $x$ almost surely, and since $\nu\mapsto \nu([-\infty,x]\times \R^{-\infty,+\infty})$ is continuous $\P_{\mathcal{E}_\infty}$-almost surely, by \cite[Lemma~4.1]{KallenbergRandomMeasures2017}. The result follows from Theorem~\ref{thm1} by an application of the continuous mapping theorem and using the fact that $\P(\mathcal{P}(1,n) \neq \emptyset) = \P(\mathcal{E}_n \neq \emptyset)\to \P(\mathcal{E}_\infty \neq \emptyset) = \P(W\tilde W > 0)$.
\end{proof}

\begin{remark}\label{rmk-lattice-1} 
If the weight distribution $X$ satisfies  \eqref{def-alpha}
but not  \eqref{eq-assumption-4}, i.e., it is arithmetic, then  $\mathbb{X}^{*}_n$ is supported on the lattice so that  one can not except  $\mathbb{X}^{*}_n- \frac{1}{\alpha}\ln n$ still converges weakly. 
 Instead, one can characterize its subsequential limits. For the case of non-negative weights, this was established in \cite[Theorem 1.1 (b)]{DSS23}.
 Assume that $X \in \mathbb{R}$ is arithmetic with span $d>0$. For any $ \vartheta \in [-d,0]$, let $N_{\vartheta} \subset \mathbb{N}$ be a  sequence such that 
\(
\lim_{N_{\vartheta} \ni n \to \infty} d \left\lfloor \frac{ \ln n }{d \alpha} \right\rfloor - \frac{\ln n}{\alpha} = \vartheta .
\)
Define the limiting intensity measure
\begin{equation}\label{Lambda-d}  
   \Lambda_{d}(\dif x,\dif h) =  \gamma \cdot \left( \alpha d  \sum_{j \in \mathbb{Z}} e^{\alpha j d} \delta_{j d}(\dif x) \right) \otimes  \left( \frac{1}{\sqrt{2 \pi}} e^{-\frac{h^2}{2}}   \dif h \right) . 
\end{equation}
Then under  assumption \eqref{def-alpha},
along the subsequence $N_{\vartheta}$, the point process
\[
\sum_{p \in \mathcal{P}(1,n)} \delta_{\left(  X(p) - d\lfloor \frac{\ln n}{d \alpha} \rfloor ,\frac{ H(p)- \gamma \ln n}{\sqrt{\beta \ln n}}   \right)}
\] 
converges in law to the Cox process with intensity measure $W \tilde{W} e^{\alpha \vartheta} \Lambda_{d}$, and, furthermore, $\P(\mathcal{P}(1,n)\ne \emptyset) \to \P(W\tilde W \ne 0)$.
The approach in this paper can be adapted straightforwardly to obtain this result. See also Remark~\ref{rmk-lattice-2}.
\end{remark}

\subsection{Motivations and related work} 
Our motivation arises from the study of first-passage percolation (FPP) on random graphs, a classical probabilistic model used to analyze the spread of fluid flow or information through some (random) media. 
In this setting, the edges of a graph represent the possible pathways for the flow, and each edge is assigned a random passage time describing the duration required to traverse it.
The first-passage distance (or time) between two vertices is defined as the minimal total passage time needed to travel from one vertex to the other.
FPP investigates the geometric and probabilistic structures induced by these random distances. Since the pioneering work of Hammersley and Welsh \cite{HW65}, the model has been extensively studied, especially on the integer lattice $\mathbb{Z}^d$, and  we refer to the survey \cite{ADH17} for a comprehensive overview.

Beyond the lattice setting, understanding how fluids or information propagate through complex real-world networks naturally motivates the study of FPP on various random graphs.   As early as 1999, Janson  \cite{Jan99}  investigated  the minimum-weight path on complete graph with random positive weights (including exponential and uniform  edge weights.). Subsequent studies on the complete graph include \cite{BvdHH12,BvdHH13}.    
Bhamidi, van der Hofstad and Hooghiemstra further developed the theory by investigating FPP on the configuration model under both exponential and general positive edge weights with a density \cite{BvdHH10b,BvdHH10,BvdHH17b,BvdHH17}. These authors  extended their results to Erd\H{o}s--R\'{e}nyi graphs in \cite{BvdHH11} for exponential weights, with subsequent generalizations to arbitrary non-negative weights by Daly, Schulte, and Shneer \cite{DSS23}.  
Moreover  Komjáthy and Vadon  \cite{KV16} studied FPP on the Newman–Watts small world model  and  
Kistler, Schertzer and Schmidt \cite{KSS20a,KSS20b} studied oriented FPP on the hypercube in the limit of large dimensions. 

All the studies mentioned above deal with non-negative edge weights. However, in the general shortest path problem, the edge weights could be negative values, which could model  a gain instead of a loss or cost. This leads us to consider random graphs with signed edge weights.    This brings a natural difficulty: if a cycle with a negative total weight exists, one could keep going around it to make the total cost decrease without bound. To avoid this, we restrict attention to simple paths. Still, when negative edges become too abundant compared with non-negative edges, one should expect the behavior to be very different. For example, in the extreme case that the weights are equal to a negative constant, then the path with smallest weight is simply the longest path, which is known to be of length proportional to $n$ \cite{AKS81}.  Our assumption \eqref{def-alpha} guarantees that locally around a point, the weight of all paths originating from that point are ultimately positive and in fact grow linearly in their length, see Section~\ref{sec:local_weak_convergence} for details. Under this assumption, we show that the minimum-weight path statistics still belongs to the same universality class as in  \cite{BvdHH17} and \cite{DSS23}. 

We now provide a glimpse into our method and compare it to previous methods to study FPP on sparse graphs. A classical approach, followed for example by Bhamidi, van der Hofstad and Hooghiemstra  \cite{BvdHH10b,BvdHH10,BvdHH17b,BvdHH17} or by Komjáthy and Vadon  \cite{KV16} is to describe the growth of the balls of radius $t\ge 0$ for the first passage percolation distance around a given point by a coupling to a continuous-time branching process. The shortest distance between two points is then given by (twice) the first time the two growing balls around the two points intersect. In order to use this idea effectively, it is convenient if the weights are exponentially distributed. In the general case, this method is more complicated to use, since the continuous-time branching process used in the coupling is not Markovian anymore. An additional difficulty is the need to couple to the branching process for a long time, at which cycles start to appear and which need to be taken care of. 

In order to deal with completely general (non-negative) weights, Daly, Schulte, and Shneer \cite{DSS23} used a radically different method, based on a method of moments. They were able to calculate moments of quantities counting the number of extremal paths and their hopcount and showed that these moments satisfy a certain recurrence. They were then able to identify this recurrence in terms of the Cox process $\mathcal E_\infty$, based on a recurrence of the moments of $W$. It is worth to point out that their moment calculations work \emph{vanilla}, i.e.~without any truncation. One reason for this is that, in the case of non-negative weights, the Biggins martingale limit $W$ of the branching random walk has exponential tails \cite{RoslerFixedPoint1992}. This is not true in the case of signed weights, in fact, under common assumptions, $W$ has a polynomial tail \cite{Liu2000}. One could still hope to adapt the method of moments by truncating paths which become negative, or smaller than some constant $-c$. Indeed, if one does this for the branching random walk, one recovers exponential tails for $W$, as was recently shown by the authors \cite{MaMaillardExponentialMoments2025}. However, it seems not clear how to establish a recurrence for the moments, because of the need to keep track of the starting point of the branching random walk.

In this paper, we therefore propose a method which is closer in spirit to classical methods, but with several twists. First, we explore clusters locally around points, but not as balls with respect to the FPP distance, but rather with respect to graph distance, describing them by a branching random walk instead of a continuous-time branching process. This in particular allows to easily handle signed weights. Second, we do not insist on coupling the clusters to the branching random walk until they meet, but only up to a certain \emph{graph distance} growing slowly with $n$. This allows us not to be bothered with cycles at this stage, which otherwise is a major source of difficulty. Third, we patch the two clusters together by intermediate paths. These paths will be almost independent from each other, so that they will account for the Poisson process. To prove this, we use the Chen-Stein method, which reduces the problem to the calculation of some (truncated) first and second moments. 

Our approach is partly inspired by Kistler, Schertzer and Schmidt \cite{KSS20a,KSS20b} on oriented FPP on the hypercube (with exponentially distributed weights). This is a dense setting and the underlying graph is deterministic, whereas we work on a random, sparse graph. While the results and scalings are different, the underlying idea of conditioning on neighborhoods around the two vertices and using the Chen-Stein method to deal with the paths connecting the two neighborhoods still works in our setting. One difference is that we do not apply the Chen-Stein method as is, since we restrict to a certain good set for the second moment calculations. We therefore derive a variant of the method that incorporates a certain global dependency and which is possibly of independent interest.

To conclude, \textit{we believe that the method in this article paves the way to effectively studying first passage percolation with general, possibly signed weights on other sparse, locally tree-like  graphs, such as the configuration model.}  We also believe that they allow to deal with dense settings, if the FPP distances can be locally approximated by a branching random walk (with infinite number of offspring). Indeed, the method is very robust, relying mainly on (truncated) first- and second-moment computations. It should also extend to the study of more complicated quantities, such as the vector of FPP distances between three or more points.
  
\subsection{Roadmap of the proof}
\label{sec-prood-road-map}
The starting point of our proof is the following lemma, which indicates that to establish weak convergence of a sequence of point processes, it suffices to verify the convergence of the number of points contained in each rectangle.

\begin{lemma}
\label{lem:kallenberg}
Consider the set of rectangles of the form
\[
\mathcal R = \{(x,x']\times (h,h']: x,x'\in\R^{-\infty},\ x\le x',\ h,h'\in\R^{-\infty,+\infty},\ h\le h'\}.
\]
Then $\mathcal{E}_n$ converges in law to $\mathcal{E}_\infty$ with respect to the vague topology on measures on $\R^{-\infty}\times \R^{-\infty,+\infty}$ if and only if $\mathcal{E}_n(R)$ converges in law to $\mathcal{E}_\infty(R)$ for every $R\in\mathcal{R}$.
\end{lemma}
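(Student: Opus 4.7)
The plan is to derive both directions from general convergence theory for simple point processes on the locally compact Polish space $S = \R^{-\infty} \times \R^{-\infty,+\infty}$, viewing $\mathcal{R}$ as a dissecting semi-ring of relatively compact Borel subsets of $S$. That $\mathcal{R}$ has the dissecting-semi-ring property is a routine verification: intersections of rectangles are rectangles, set-theoretic differences decompose into finite disjoint unions of rectangles, and every compact $K\subset S$ is contained in some $R\in\mathcal{R}$ (the first coordinate of any compact set must be bounded above by some finite $x'$, while the second coordinate already lives in the compact space $\R^{-\infty,+\infty}$). A crucial further ingredient is that $\mathcal{E}_\infty$ is almost surely simple: conditionally on $W\tilde W$, it is a Poisson process with intensity $W\tilde W\,\Lambda$, and $\Lambda$ has a density against Lebesgue measure, hence is non-atomic.

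For the forward direction I apply the continuous mapping theorem. For any $R = (x, x'] \times (h, h'] \in \mathcal{R}$, the topological boundary $\partial R$ in $S$ is contained in a finite union of horizontal and vertical segments, each of which has zero $\Lambda$-measure; hence $\mathcal{E}_\infty(\partial R) = 0$ almost surely. By \cite[Lemma~4.1]{KallenbergRandomMeasures2017}, $\nu\mapsto\nu(R)$ is then vaguely continuous at $\mathcal{E}_\infty$ almost surely, and CMT yields $\mathcal{E}_n(R)\Rightarrow \mathcal{E}_\infty(R)$.

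For the backward direction I combine tightness with uniqueness of subsequential limits. Given any compact $K\subset S$, choose $R_K\in\mathcal{R}$ with $K\subset R_K$; by hypothesis $\mathcal{E}_n(R_K)\Rightarrow\mathcal{E}_\infty(R_K)$, so $\{\mathcal{E}_n(R_K)\}_n$ is tight as a family of real-valued random variables, which by Kallenberg's tightness criterion gives vague tightness of $\{\mathcal{E}_n\}_n$. Let $\xi$ be any vague subsequential limit. Applying the forward direction along the subsequence gives $\xi(R)\stackrel{d}{=}\mathcal{E}_\infty(R)$ for every $R\in\mathcal{R}$. Since convergence in distribution of nonnegative integer-valued random variables entails pointwise convergence of their probability mass functions, we obtain $\P(\xi(R)=0)=\P(\mathcal{E}_\infty(R)=0)$ for all $R\in\mathcal{R}$. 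Simplicity of $\mathcal{E}_\infty$ together with the dissecting-semi-ring property of $\mathcal{R}$ then allows us to invoke Kallenberg's uniqueness theorem for simple point processes, concluding $\xi\stackrel{d}{=}\mathcal{E}_\infty$. Hence every subsequential limit coincides with $\mathcal{E}_\infty$ and the full sequence converges vaguely.

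The main point requiring care is verifying the dissecting-semi-ring property and the simplicity of $\mathcal{E}_\infty$ in the compactified space $S$, and ensuring that the uniqueness theorem transfers from void probabilities on $\mathcal{R}$ to the full distribution of an a priori non-simple subsequential limit; once these are in place, both directions reduce to standard results for simple point processes on an lcscH space.
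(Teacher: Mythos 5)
The key issue is in your claim that $\mathcal R$ is a dissecting semi-ring for $S = \R^{-\infty}\times\R^{-\infty,+\infty}$, and in particular that every compact $K\subset S$ is contained in some $R\in\mathcal{R}$. This is false: the singleton $\{(-\infty,0)\}$ is compact in $S$, but no rectangle $(x,x']\times(h,h']$ contains the point $(-\infty,0)$ since $-\infty\notin(x,x']$ for any $x\in\R^{-\infty}$; the same problem arises for $h=-\infty$. Consequently, $\mathcal{R}$ fails to cover compacts and your tightness argument in the backward direction does not go through as written. This is precisely the subtlety the paper addresses by first introducing the enlarged class $\tilde{\mathcal R}$, which also contains sets of the form $[-\infty,x]$ and $[-\infty,h]$ (genuinely a dissecting semi-ring for $S$), proving the statement for $\tilde{\mathcal R}$ via Kallenberg's Theorem~4.15, and then observing that since $\mathcal{E}_n$ and $\mathcal{E}_\infty$ almost surely put no mass on $\{x=-\infty\}\cup\{h=-\infty\}$, the avoidance data on $\mathcal R$ and on $\tilde{\mathcal R}$ coincide, so the equivalence transfers. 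Your proof needs this same reduction step.

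There is a second, smaller gap in the backward direction. When you let $\xi$ be a vague subsequential limit and ``apply the forward direction along the subsequence'' to conclude $\xi(R)\stackrel{d}{=}\mathcal{E}_\infty(R)$, you implicitly need $\xi(\partial R)=0$ almost surely for the continuous mapping theorem; but $\xi$ is an unidentified limit and this continuity condition is not known a priori. The standard fix (restrict to the co-countable set of ``good'' rectangles whose boundary segments are $\xi$-null, then pass to the limit along those, then extend by monotonicity) works, but it is not stated. Finally, it is worth noting that the paper's proof is deliberately short: once $\tilde{\mathcal R}$ is set up, the equivalence is an immediate citation of Kallenberg~Theorem~4.15 (and the remark preceding it, which handles the one-dimensional version for simple limits); your proposal essentially re-derives that theorem from scratch via tightness and Rényi-type uniqueness, which is a valid route but invites exactly the technical slips described above.
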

\begin{proof}
We aim to apply Theorem~4.15 in Kallenberg~\cite{KallenbergRandomMeasures2017}. Define the class
\begin{align*}
\tilde{\mathcal R} &= \Big(\{(x,x']: x,x'\in\R^{-\infty},\ -\infty < x\le x'\}\cup\{[-\infty,x]: x\in \R^{-\infty}\}\Big)\\
&\times \Big(\{(h,h']: h,h'\in\R^{-\infty,+\infty},\ -\infty < h\le h'\}\cup\{[-\infty,h]: h\in \R^{-\infty,\infty}\}\Big)
\end{align*}
Then $\tilde{\mathcal R}$ is, in the language of that reference, a dissecting semi-ring of the Borel sets on $\R^{-\infty}\times \R^{-\infty,+\infty}$ and we have for every $R\in\tilde{\mathcal{R}}$, $\E[\mathcal{E}_\infty(\partial R)] = \E[W\tilde W\Lambda(\partial R)] = 0$. Theorem~4.15 in Kallenberg~\cite{KallenbergRandomMeasures2017} (or the remark that precedes it) then implies the statement with $\tilde{\mathcal R}$ in place of $\mathcal R$. Now notice that $\mathcal{E}_n$ and $\mathcal{E}_\infty$ do not contain points $(x,h)$ with $x = -\infty$ or $h = -\infty$, so that the statement also holds with $\mathcal{R}$ instead.
\end{proof}

Notice that the law of $\mathcal{E}_{\infty}(R)$   is a Poisson distribution, but with a \textit{random} intensity $W\tilde{W} \Lambda(R)$. Inspired by the previous study \cite{BvdHH11} and \cite{KSS20b},   the randomness in the intensity, namely  $W$ and $\tilde{W}$,   comes from the random evolution within the local neighborhood of the  
vertices $1$, $n$.  
Thus,  to  establish the weak convergence of $\mathcal{E}_{n}(R)$,  we therefore   first reveal or explore the local neighborhood of $\{1,n\}$. Then conditionally on this local information, the limiting distribution should be  a Poisson random variable.  There exist various tools for proving Poisson convergence; here the Chen–Stein method seems to be simple and elegant.

We now give a precise description of the framework, starting with the relevant notation. 
For any vertex subset $A \subset [n]$ and  a radius $\rho \in \mathbb{N}$, let $\mathcal{S}_{\rho}(A)$ denote the set of vertices at a distance of $\rho$ from $A$. The $\rho$-neighborhood of $A$ in $G_n$, denoted by $\mathcal{S}_{\leq \rho}(A)$, is the (weighted) subgraph induced by the union of the $k$-spheres $\mathcal{S}_k(A)$ for $k$ from $0$ to $\rho$.
Throughout this paper, we let $(r_n)_{n\ge1}$ be a sequence of natural numbers such that\footnote{We could in fact allow $r_n \le \delta \log n$ for some small $\delta>0$, at the expense of some more complicated arguments. In particular, we would need to extend Lemma~\ref{lem-renewal-function} to a wider range of $x$.}
\begin{equation}
\label{eq:r_n}
r_n\to\infty \quad\text{and}\quad r_n = o(\sqrt{\log n}).
 \end{equation}
 Moreover, we  introduce a good event $\mathsf{G}^{1}_n $ for the $r_{n}$-neighborhoods of vertices $1$ and $n$:
\begin{align}
    \mathsf{G}^{1}_n  &\coloneq   \big\{  \mathcal{S}_{\le r_{n}}(1) \text{ and } \mathcal{S}_{\le r_{n}}(n)  \text{ are both trees, and } |\mathcal{S}_{\le r_{n}}(1)|  \vee  |\mathcal{S}_{\le r_{n}}(n)|  \le (2 \lambda)^{r_{n}} \big\} \\
   & \cap \big\{ \mathcal{S}_{\le r_{n}}(1, n)  \text{ is a forest of two trees}  \big\}. 
\end{align}
For technical reasons, we'll introduce two additional good events $\mathsf{G}^{2}_{n},\mathsf{G}^{3}_{n}$ in \S \ref{sec-good}; and  our analysis will always assume the occurrence of   $\mathsf{G}_{n}\coloneq  \mathsf{G}^{1}_n \cap \mathsf{G}^{2}_{n} \cap \mathsf{G}^{3}_{n}$. Lemma  \ref{lem-good-event} shows that the good event $\mathsf{G}_{n}$ happens with high probability. It makes use of the fact that the neighborhood of vertex $1$ in $\mathcal{G}_{n}$ converges  in the local weak sense  to a Galton-Watson   tree with $\mathrm{Poisson}(\lambda)$ offspring distribution. In the following, let 
\begin{equation*}
  \pi_\lambda(k) = \frac{\lambda^k}{k!}e^{-\lambda} \text{ for all } \lambda >0 \text{ and } k \in \mathbb{N}.
\end{equation*} 
Notice that for $\lambda,\mu>0 $, 
\begin{equation}\label{eq-tvd-poi}
   \sup_{k\ge 0}\bigl|\pi_\lambda(k)-\pi_\mu(k)\bigr|
   \le \P( \mathrm{Poi}(|\lambda-\mu|) \ge 1  ) \le  |\lambda-\mu|.  
\end{equation} 

 Given $R \in \mathcal{R}$, we bound the difference of the probability distribution function of $\mathcal{E}_{n}(R)$ with $\mathcal{E}_{\infty}(R)$ as follows:
\begin{align}
  & \left| \P\left(  \mathcal{E}_{n}(R) = k   \right) -  \P\left(  \mathcal{E}_{\infty}(R) = k \right) \right|     \\
  & \le  \P(\mathsf{G}_{n}^{c} ) + \E \left[ \left| \,  \P\left[  \mathcal{E}_{n}(R) = k ,  \mathsf{G}_{n} \mid \mathcal{S}_{\le r_{n}}(1,n) \right] - \pi_{\lambda_{n}(R) }(k)  \ind{  \mathsf{G}_{n}} \right|  \right]  \label{eq-avoid-prob-error-1}\\ 
  &  \qquad   +   \left|\E \left[      \pi_{\lambda_{n}(R) }(k)   \ind{  \mathsf{G}_{n}}  \right] -  \E \left[  \pi_{ W \tilde{W}\Lambda(R) }(k)   \right] \right|  \label{eq-avoid-prob-error-2} 
\end{align}
where 
\begin{equation}
 \lambda_{n}(R) \coloneq  \E \left[  \mathcal{E}_{n}(R) \mid \mathcal{S}_{\le r_{n}}(1,n) \right] \ge 0 .
\end{equation} 
Therefore, by Lemma \ref{lem:kallenberg}, to establish the weak convergence $\mathcal{E}_{n} \to \mathcal{E}_{\infty}$, 
it suffices to show that  the error terms in \eqref{eq-avoid-prob-error-1} and \eqref{eq-avoid-prob-error-2} vanish as $n \to \infty$.

\medskip

We first explain the idea for proving that  \eqref{eq-avoid-prob-error-2} vanishes.
Notice that when $\mathsf{G}^{1}_n$ occurs, for any vertex $u \in \mathcal{S}_{r_{n}}(1)$, the there exists a unique path from $1$ to $u$ within the subgraph $\mathcal{S}_{\le r_{n}}(1,n)$. We refer to this path as $[1,u]_{r_{n}}$ or just $[1,u]$ for simplicity if there's no risk of confusion.
 Similarly, for any  $v \in \mathcal{S}_{r_{n}}(n)$, $[v,n] $  denotes the unique path from $v$ to $n$ in  $\mathcal{S}_{\le r_{n}}(1,n)$. Hence the following two quantities are well-defined:
\begin{equation}
  W_{r_{n}}  \coloneq  \ind{\mathsf{G}^{1}_n}\sum_{u \in  
  \mathcal{S}_{r_{n}}(1)} e^{-\alpha X( [1, u])} \text{ and } \tilde{W}_{r_{n}} \coloneq  \ind{\mathsf{G}^{1}_n} \sum_{v \in  
  \mathcal{S}_{r_{n}}(n)} e^{-\alpha X([n,v])} . 
\end{equation}  
A simple computation yields that the (conditional) expectation of $\mathcal{E}_{n}(R)$ is related to the (truncated) exponential renewal function of the random walk with step distribution $X$ (see Lemma \ref{lem-set-A}). By using the renewal theory for random walk with positive jumps (see Lemma~\ref{lem-renewal-function}), in Lemma \ref{lem-cond-set}  we will show  that 
 \begin{equation}
  |\lambda_{n}(R) - W_{r_{n}} \tilde{W}_{r_{n}} \Lambda(R) | \to 0 \text{ in probability as } n \to \infty .
 \end{equation}
Then \eqref{eq-tvd-poi} and bounded convergence theorem imply that 
\[  |\E [      \pi_{\lambda_{n}(R) }(k)   \ind{  \mathsf{G}_{n}}  ]- \E [      \pi_{ W_{r_{n}} \tilde{W}_{r_{n}} \Lambda(R) }(k)] | \to 0 \]
 Since the local weak limit of the weighted graph $\mathcal{G}(n,\lambda/n)$ is a branching random walk,   Corollary \ref{lem-W-r_{n}-W} shows that
\begin{equation}
  ( W_{r_{n}} , \tilde{W}_{r_{n}}) \text{ converges weakly to }  (W,\tilde{W})  \text{ as } n \to \infty.
\end{equation}
In particular $ |  \E [      \pi_{ W_{r_{n}} \tilde{W}_{r_{n}} \Lambda(R) }(k) ] - \E [   \pi_{ W \tilde{W}\Lambda(R) }(k) ] | \to 0$, which establishes \eqref{eq-avoid-prob-error-2}.

The proof of \eqref{eq-avoid-prob-error-1} is based on the Chen–Stein method for Poisson approximation, originally developed by Chen \cite{Chen75} and subsequently refined and simplified by Arratia, Goldstein, and Gordon \cite{AGG89}.  This work provides a robust framework for approximating sums of dependent indicator random variables with a Poisson distribution, with broad applications across various models (see, e.g., \cite{CCH15,CCH16,KSS20a,KSS20b,ST20,FTWY24}; the list is by no means exhaustive).
Compared with the   formulations  in \cite[Theorem4.1]{Chen75}, \cite[Theorem 1]{AGG89} or \cite[Theorem 4.7]{Ross11}, in the following version, we have an additional   indicator variable $\chi$ in the covariance terms, see \eqref{eq-Stein} (and we condition on a $\sigma$-algebra $\mathcal F$, but this is a trivial generalization). For completeness we provide a proof in Appendix \ref{app1}.

\begin{lemma}[Chen--Stein method for Poisson Approximation]\label{Stein_method}
Let  \((X_i)_{i\in I}\) be a family of Bernoulli random variables with finite index set $I$. 
Let \( \mathcal F\) be a  sub-$\sigma$-algebra.  For each \(i\in I\), suppose that a conditionally dissociating neighborhood \(N_i\subset I\) is given, that is, conditionally on \(\mathcal F\),  $X_{i}$ is independent of  $\{X_j : j\notin \bar{N}_{i}\}$ where $\bar{N}_{i}\coloneq N_{i} \cup \{i\}$.
Define
\[
Y \coloneq  \sum_{i\in I} X_i, \text{ and }
\lambda \coloneq  \sum_{i\in I} \E_{\mathcal{F}} [X_{i}]= \sum_{i\in I} \P_{\mathcal{F}} \left( X_{i}=1 \right) .
\]
Let $\chi$ be another Bernoulli random variable (not necessarily $\mathcal{F}$-measurable). 
Then  
\begin{equation}\label{eq-Stein}
  \begin{aligned}
   &  \mathrm{d}_{\mathrm{TV}, \mathcal F}(\chi Y, \mathrm{Poi}(\lambda) ) \coloneq  \sup_{A \subset \mathbb{N}}
  \bigg|\, \P_{\mathcal{F}}(\chi Y \in A)-  \sum_{k \in A} \pi_\lambda(k)   \,\bigg| \\
  & \le  2 \P_{\mathcal{F}}(\chi=0)  +  \sum_{i\in I}\sum_{j\in \bar{N}_i}
  \E_{\mathcal{F}} [X_i] \, \E_{\mathcal{F}} [X_j ]
  + \sum_{i\in I}\sum_{j\in N_i}
  \E_{\mathcal{F}} [   X_iX_j \chi ] .
  \end{aligned} 
\end{equation}
\end{lemma}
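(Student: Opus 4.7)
The plan is to adapt the classical Chen--Stein Poisson approximation scheme, carefully tracking the Bernoulli indicator $\chi$ and the conditioning on $\mathcal F$. Let $\phi = f_A$ denote the Stein solution of the Poisson equation $\lambda\phi(k+1) - k\phi(k) = \mathbf{1}_{k \in A} - \pi_\lambda(A)$ with $\phi(0)=0$, which satisfies the Barbour--Eagleson bounds $\|\phi\|_\infty \vee \|\Delta\phi\|_\infty \le 1$, and write $p_i = \E_{\mathcal F}[X_i]$ so that $\lambda = \sum_i p_i$ is $\mathcal F$-measurable. Applying the Stein identity to $\chi Y$ and exploiting $\chi \in \{0,1\}$ via $\phi(\chi Y + 1) = (1-\chi)\phi(1) + \chi\phi(Y+1)$ together with $\chi Y\,\phi(\chi Y) = \chi Y\,\phi(Y)$, one gets
\[
\E_{\mathcal F}[\mathbf{1}_{\chi Y \in A}] - \pi_\lambda(A) = \lambda\phi(1)\,\P_{\mathcal F}(\chi = 0) + \E_{\mathcal F}\bigl[\chi\bigl(\lambda\phi(Y+1) - Y\phi(Y)\bigr)\bigr].
\]
Since $\lambda\phi(1) = \mathbf{1}_{0 \in A} - \pi_\lambda(A)$ has absolute value at most $1$, the first summand is already bounded by $\P_{\mathcal F}(\chi=0)$, accounting for one of the two such terms in the target.

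The remaining expectation expands, using $\lambda = \sum_i p_i$ and $Y = \sum_i X_i$, as $\sum_i D_i$ with $D_i := p_i\E_{\mathcal F}[\phi(Y+1)\chi] - \E_{\mathcal F}[X_i\phi(Y)\chi]$. For each $i$ I decompose $Y = X_i + V_i + Z_i$ with $V_i = \sum_{j \in N_i} X_j$ and $Z_i = \sum_{j \notin \bar N_i} X_j$, and use $X_i \in \{0,1\}$ so that $X_i\phi(Y) = X_i\phi(V_i + Z_i + 1)$. Telescoping each $D_i$ through the intermediate values $\phi(V_i+Z_i+1)$ and $\phi(Z_i+1)$ produces three ``derivative'' contributions (controlled via $\|\Delta\phi\|_\infty \le 1$) plus a residual $R_i := p_i\E_{\mathcal F}[\phi(Z_i+1)\chi] - \E_{\mathcal F}[X_i\phi(Z_i+1)\chi]$. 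Dropping $\chi$ via $\chi\le 1$ on the derivative pieces which do not contain an $X_i$, while retaining it on the one involving $X_iV_i$, the total derivative contribution sums to at most
\[
\sum_i p_i \sum_{j \in \bar N_i} p_j + \sum_i\sum_{j \in N_i}\E_{\mathcal F}[X_iX_j\chi],
\]
matching the two double sums in the claim.

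For the residual $\sum_i R_i$, the conditional dissociation $X_i \perp Z_i \mid \mathcal F$ gives $\E_{\mathcal F}[X_i\phi(Z_i+1)] = p_i\E_{\mathcal F}[\phi(Z_i+1)]$. Inserting $\chi = 1 - (1-\chi)$ into both expectations defining $R_i$ and cancelling the ``$1$'' parts against this identity yields $R_i = \E_{\mathcal F}\bigl[(X_i - p_i)\phi(Z_i+1)(1-\chi)\bigr]$. Grouping back under one expectation, $\sum_i R_i = \E_{\mathcal F}\bigl[(1-\chi)\sum_i (X_i - p_i)\phi(Z_i+1)\bigr]$; bounding via $\|\phi\|_\infty\le 1$ combined with the global cancellation $\E_{\mathcal F}\bigl[\sum_i (X_i-p_i)\phi(Z_i+1)\bigr]=0$ (again by dissociation) confines the effective contribution to the support of $(1-\chi)$ and yields the second $\P_{\mathcal F}(\chi=0)$ contribution, completing the bound.

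The main difficulty will be this last step: a pointwise triangle inequality gives only the useless $|I|\P_{\mathcal F}(\chi=0)$, and the argument must keep $\sum_i (X_i - p_i)\phi(Z_i+1)$ inside a single expectation to simultaneously exploit its $\mathcal F$-conditional mean zero and the boundedness of $\phi$. I expect the appendix to carry this out via an explicit covariance--type identity or by working directly with the Stein operator for $\chi Y$, the refined Stein-factor bound $|\lambda\phi(1)|\le 1$ being the key ingredient ensuring the correction is $O(\P_{\mathcal F}(\chi=0))$ and not $O(\lambda\,\P_{\mathcal F}(\chi=0))$.
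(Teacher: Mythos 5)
Your proposal follows the paper's proof template essentially exactly up to the final residual term. Both of you take the Stein solution with $g(0)=1$ so that $|\lambda g(1)| = |\ind{0\in A}-\pi_\lambda(A)|\le 1$ absorbs one $\P_{\mathcal F}(\chi=0)$; both decompose $Y = X_i + V_i + Z_i$ (the paper writes $Y_i$ for your $V_i$); both telescope through $g(1+V_i+Z_i)$ and $g(1+Z_i)$ and use $\|\Delta g\|_\infty\le1$ to extract $\sum_i\sum_{j\in\bar N_i}p_ip_j$ and $\sum_i\sum_{j\in N_i}\E_{\mathcal F}[X_iX_j\chi]$. So structurally the two arguments agree.

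The point of divergence is the residual $R_i := \E_{\mathcal F}\left[(X_i-p_i)\,g(1+Z_i)\,\chi\right]$, and I believe there is a genuine gap there — in both your sketch and, as far as I can tell, in the paper's written proof. You correctly flag that the pointwise triangle inequality gives only $|I|\,\P_{\mathcal F}(\chi=0)$ and speculate that the paper avoids this via a covariance-type identity applied to $\sum_i R_i$ kept inside one expectation. It does not: the paper's appendix bounds each individual $|R_i|$ by $\P_{\mathcal F}(\chi=0)$, using $\E_{\mathcal F}[(X_i-p_i)g(1+Z_i)]=0$, $|X_i-p_i|\le1$ and $\|g\|_\infty\le1$, and then declares the proof complete; summing over $i\in I$ this is exactly $|I|\,\P_{\mathcal F}(\chi=0)$, not $\P_{\mathcal F}(\chi=0)$. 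Your proposed global cancellation also does not produce the stated constant: writing $\sum_i R_i = -\E_{\mathcal F}\left[(1-\chi)\sum_i(X_i-p_i)g(1+Z_i)\right]$ and using $\|g\|_\infty\le1$ one can only bound the inner sum pointwise by $Y+\lambda$, which yields $\E_{\mathcal F}\left[(Y+\lambda)\ind{\chi=0}\right]$ rather than $\P_{\mathcal F}(\chi=0)$. Thus neither your outline nor the paper's written argument justifies the bound $2\P_{\mathcal F}(\chi=0)$ as stated; the argument as carried out supports a bound in which the second $\P_{\mathcal F}(\chi=0)$ is replaced by a term of the form $\E_{\mathcal F}\left[(Y+\lambda)\ind{\chi=0}\right]$ (or $\lambda\P_{\mathcal F}(\chi=0)+\E_{\mathcal F}[Y\ind{\chi=0}]$), and some additional control would then be needed in the application where the lemma is invoked.
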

 
To apply this Lemma, we rewrite $\mathcal{E}_{n}(R) $ as summation of Bernouli random variables:
\begin{equation*}
  \mathcal{E}_{n}(R) = \sum_{p} I(p;R) 
\end{equation*} 
where  $p$ represents the potential paths starting from $1$ to $n$ (see \eqref{eq-expression-EnR} for the precise definition) and $I(p,R)$ indicates that the potential path $p$ contributes to $\mathcal{E}_{n}(R)$, i.e.,  $X(p)$ and $H(p)$, after rescaling, falls in set $R$. 
We will apply Lemma \ref{Stein_method} with 
\begin{equation*}
  Y=\mathcal{E}_{n}(R) \,, \ \chi= \ind{\mathsf{G}_{n}} \,, \ \mathcal{F}=\mathcal{S}_{\le r_{n}}(1,n) ;
\end{equation*}
 and the dissociating neighborhood of for a potential path $p$, denoted by  $\mathcal{N}_{p}$,  consists of other potential paths that share at least one edge with $p$ outside $\mathcal{S}_{\le r_{n}}(1,n)$. 

 In Lemma \ref{lem-correlation-1}  we will show that  $\sum_{p} \sum_{p' \in \mathcal{N}_{p}} \mathbb{E}_{\mathcal{F}}[ I(p;R)]  \mathbb{E}_{\mathcal{F}}[I(p';R)] $  vanishes as $n \to \infty$.
 This first-moment estimate follows from a straightforward counting argument on $|\mathcal{N}{p}| $ together with a renewal-theoretic result (see Lemma~\ref{lem-renewal-function}).   In Lemma \ref{lem-correlation-2} we further prove that 
 the term $\sum_{p} \sum_{p' \in \mathcal{N}_{p}}  \mathbb{E}_{\mathcal{F}}[ I(p;R)I(p';R)] $ also vanishes, based on a more refined counting of  subclasses of $p'\in \mathcal{N}_{p}$ with specific correlation structures between $I(p';R)$ and $I(p;R)$. We will see that the introduction of the good event $\mathsf{G}_{n}$ within the correlation terms is necessary when $\psi_{X}(2 \alpha) \ge \psi_{X}(\alpha)$. 
In contrast, when $\psi_{X}(2\alpha) < \psi_{X}(\alpha)$, which includes the case $X \ge 0$, we may simply take $\chi = 1$.



\section{Asymptotic of Expectations}
 
\subsection{Preliminary: Renewal function.}
Let $(S_{n})_{n \ge 0}$ denote a random walk with step distribution $X$ and initial position $S_0=0$. We define the renewal function 
\begin{equation}
  V(x)\coloneq   \sum_{k \ge 1} \lambda ^{k} \P( S_{k} \le x ) \quad  \text{ for all } x \in \mathbb{R}. 
\end{equation}
Furthermore, for $h\in\R^{-\infty,+\infty}$, let 
\begin{equation}
    \label{eq:def_knh}
k_{n}(h)\coloneq  \lfloor  \gamma \ln n + h \sqrt{\beta \ln n} \rfloor \vee 0.
\end{equation}

\begin{lemma}\label{lem-renewal-function} 
  \begin{enumerate}[(i)]
 \item There exists $C<\infty$ such that for all $x \ge 0$, 
\begin{equation}\label{eq-renewal-function-bnd}
 V(x) \le C e^{\alpha x}  
\end{equation} 
Moreover, we have the following limits:
\[
\lim_{x \to \infty} \frac{V(x)}{e^{\alpha x}} = \gamma,
\]
and there exists $\alpha' > \alpha$ such that
\[
\lim_{x \to -\infty} \frac{V(x)}{e^{\alpha' x}} = 0.
\]
 \item Let $x\in\R$ and $h \in \R\cup\{+\infty\}$. Let $(\Gamma_{n})_{n \ge 1}$ be any sequence of positive numbers such that $\Gamma_{n}=o(\sqrt{\log n})$.  Then, as $n\to\infty$,   
\begin{equation}\label{lem-renewal-function-lim}
  \lim_{n \to \infty} \sup_{|x| \le \Gamma_{n}}  \left|\frac{1}{n \, \Lambda((-\infty,x]\times(-\infty,h])} \sum_{k=1}^{k_{n}(h)}  \lambda^k
 \P \left( S_k \le \frac{1}{\alpha} \ln n + x \right)  -1\right| = 0 .
\end{equation}
  \end{enumerate}
\end{lemma}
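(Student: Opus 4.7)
The plan is to reduce both parts to renewal theory and moderate deviations for the random walk obtained by exponential tilting. I would introduce the tilted measure $d\tilde\P/d\P = e^{-\alpha X - \psi_X(\alpha)}$ on each coordinate, under which the i.i.d.\ steps $\tilde X_i$ have positive mean $\mu := -\psi_X'(\alpha)$ and variance $\sigma^2 := \psi_X''(\alpha)>0$. Using $\lambda e^{\psi_X(\alpha)}=1$ yields the key identity $\lambda^k\P(S_k\le x) = \tilde\E[e^{\alpha\tilde S_k}\ind{\tilde S_k\le x}]$, rewriting $V(x) = \int_{-\infty}^x e^{\alpha y}\,U(\dif y)$, where $U := \sum_{k\ge 1}\tilde\P(\tilde S_k\in\cdot)$ is the renewal measure of the positive-drift walk $\tilde S$.

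For part (i), I would bound $V$ on $x\ge 0$ by splitting $\int_{-\infty}^x = \int_{-\infty}^0 + \int_0^x$: the first integral is $\le U((-\infty,0])<\infty$, finite by a large-deviation Chernoff bound on $\tilde\P(\tilde S_k\le 0)$; the second by a dyadic decomposition combined with Blackwell's theorem, which gives $\sup_a U([a,a+1])<\infty$ under the non-arithmetic assumption \eqref{eq-assumption-4}, yielding a bound $\le Ce^{\alpha x}$. The limit $V(x)/e^{\alpha x}\to\gamma$ as $x\to\infty$ then comes from the key renewal theorem for random walks applied to $g(u)=e^{-\alpha u}\ind{u\ge 0}$, which is directly Riemann integrable: $e^{-\alpha x}V(x) = \int g(x-t)\,U(\dif t)\to \mu^{-1}\int_0^\infty e^{-\alpha u}\,\dif u = (\alpha\mu)^{-1}=\gamma$. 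For the decay at $-\infty$, convexity of $\psi_X$ together with $\psi_X'(\alpha)<0$ supplies $\alpha_1>\alpha$ in $\mathcal D$ with $\lambda e^{\psi_X(\alpha_1)}<1$; a Chernoff bound then gives $V(x)\le Ce^{\alpha_1 x}$, whence any $\alpha'\in(\alpha,\alpha_1)$ satisfies $V(x)/e^{\alpha' x}\to 0$.

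For part (ii), set $y := \alpha^{-1}\ln n + x$. The plan is to use a second-order Taylor expansion of the Legendre transform $I$ of $\psi_X$ around $\mu$ (using $I(\mu)=\ln\lambda-\alpha\mu$, $I'(\mu)=-\alpha$, $I''(\mu)=\sigma^{-2}$) combined with the Bahadur--Rao expansion at the saddle-point $s^*(y/k)\approx\alpha$ to establish, uniformly in $|x|\le\Gamma_n$ and in $k$ with $|k-\gamma\ln n|\le M\sqrt{\ln n}$,
\[
\lambda^k\P(S_k\le y) = \frac{1+o(1)}{\alpha\sigma\sqrt{2\pi k}}\,e^{\alpha y}\exp\!\left(-\frac{(y-k\mu)^2}{2k\sigma^2}\right).
\]
Under the change of variables $k = \gamma\ln n + t\sqrt{\beta\ln n}$, the exponent tends to $t^2/2$ uniformly and the sum over $1\le k\le k_n(h)$ becomes a Riemann sum for $\gamma n e^{\alpha x}\Phi(h) = n\,\Lambda((-\infty,x]\times(-\infty,h])$. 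Contributions from $k$ outside the CLT window can be handled separately: for $k\le \lfloor y/\E[X]\rfloor$ the crude bound $\lambda^k\P(S_k\le y)\le \lambda^k$ together with strict convexity of $\psi_X$ (giving $\ln\lambda/(\alpha\E[X])<1$) yields a total of $O(n^c)$ for some $c<1$; for $k$ with $|k-\gamma\ln n|> M\sqrt{\ln n}$, the Gaussian factor in the Bahadur--Rao expansion yields an upper bound of order $n e^{\alpha x}(1-\Phi(M/\sqrt{\beta}))$, which is made negligible by sending $M\to\infty$ after $n\to\infty$.

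The hard part will be proving uniformity of the Bahadur--Rao expansion in $k$ and $x$ simultaneously, which requires a non-arithmetic Edgeworth-type expansion for the tilted walk $\tilde S$ with a tilting parameter $s^*(y/k)$ varying in a shrinking neighborhood of $\alpha$. The non-arithmetic hypothesis \eqref{eq-assumption-4} is essential both here and for the key renewal theorem used in part (i); in the arithmetic case only subsequential limits are available, cf.\ Remark~\ref{rmk-lattice-1}.
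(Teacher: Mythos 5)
Your proposal is correct in spirit and part (i) is essentially the paper's argument: both tilt by $\alpha$ to pass to a positive-drift random walk, both invoke Blackwell/key renewal asymptotics to get $V(x)e^{-\alpha x}\to\gamma$, and both exhibit $\alpha'>\alpha$ using the strict decrease of $\psi_X$ at $\alpha$. Your Chernoff bound at an $\alpha_1\in(\alpha,\alpha')$ for the $x\to-\infty$ decay is a clean alternative to the paper's second change of measure; the two are of comparable weight.

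For part (ii), however, your route is genuinely different and materially harder than the paper's. You propose a termwise Bahadur--Rao/Edgeworth expansion of $\lambda^k\P(S_k\le y)$ valid uniformly over the moderate-deviation window $k=\gamma\ln n+t\sqrt{\beta\ln n}$ and over $|x|\le\Gamma_n$, followed by a Riemann-sum argument. The arithmetic you sketch (tilting parameter $\approx\alpha$ at the dominant $k$, the Gaussian factor integrating to $\Phi(h)$, the prefactor reducing to $\gamma/\sqrt{2\pi}$, the small-$k$ cutoff via $\ln\lambda<\alpha\E[X]$) is all correct, but you yourself flag that the uniformity of the local expansion is ``the hard part,'' and that would require importing or reproving a non-trivial piece of uniform local limit theory. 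The paper avoids this entirely: it first telescopes to reduce the claim to $\sum_{k>k_n(h)}\lambda^k\P(S_k\le y)$, then writes this via the Markov property as $\lambda^{k_n(h)}\E[V(y-S_{k_n(h)})]$, changes measure to the tilted walk to pull out a factor $ne^{\alpha x}$, and finishes by applying the ordinary CLT to $\hat S_{k_n(h)}$ together with dominated convergence using exactly the global bound $V(y)e^{-\alpha y}\le C$ and the two limits of $V$ established in part (i). Nothing beyond the CLT and part (i) is needed. In short: your plan would work and yields sharper termwise local asymptotics, but it requires a uniform saddle-point/Edgeworth argument that the paper bypasses by leaning more heavily on the renewal function built in part (i); if you intend to carry it out, the uniform local expansion step needs to be proved or cited explicitly, as it does not follow from the ordinary Bahadur--Rao theorem at fixed slope.
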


\begin{proof}[Proof of Lemma \ref{lem-renewal-function}]
In the case $X \ge 0$ the lemma is proven in \cite[Section 2.2]{DSS23}, based on \cite[Theorem 2.2]{IM10} which essentially proves assertion (i) and covers the general case. For completeness, we provide a detailed proof. 

  Let us denote $( \hat{S}_{k})$ the random walk with initial position $ \hat{S}_{0}=0$ and step distribution $\hat{X}$  given by 
\begin{equation}\label{eq-hat-S}
  \E[  f( \hat{X}) ] = \lambda \E[ e^{-\alpha X} f(X) ]  \ , \  \forall \ f \in \mathrm{C}_{b}(\mathbb{R}). 
\end{equation} 
In particular,   $\E[  f( \hat{S}_{k}) ] = \lambda^{k}\E[ e^{-\alpha S_{k}} f({S}_{k}) ] $ for any $k \ge 1$ and 
\begin{equation}
\label{eq:exp_hatS1}
\E[\hat{S}_1] = \lambda \E[ \hat{S}_1 e^{-\alpha S_{1}}] = -\psi_{X}'(\alpha) > 0.
\end{equation}
using \eqref{def-alpha} for the last equality.

We now have
\begin{equation}
    V(x) = \sum_{k \ge 0} \E\left[  e^{\alpha \hat{S}_{k} }  \ind{\hat{S}_{k} \le  x } \right] = e^{\alpha x} \sum_{k \ge 0} \E\left[  e^{\alpha( \hat{S}_{k}-x) }  \ind{\hat{S}_{k} \le  x } \right].
\end{equation}
The first two statements of the first part now follow from the two-sided renewal theorem, together with the fact that the function $y\mapsto e^{-\alpha y}$ is directly Riemann integrable, see Feller~\cite[Sections XI.1 and XI.9]{Feller1971}. For the third statement of the first part, by assumption~\eqref{def-alpha}, there exists $\alpha'>\alpha$ such that $\psi_X(\alpha') < \psi_X(\alpha)$ and $\psi_X'(\alpha') <0$. Set  $\rho = e^{\psi_X(\alpha') - \psi_X(\alpha)} < 1$. Denote by $\tilde{S}_k$ the random walk with step distribution $\tilde{X}$ where 
\[
\E[f(\tilde X)] = e^{-\psi_X(\alpha')}\E[e^{-\alpha' X} f(X)] = \frac1\rho \E[e^{-(\alpha'-\alpha)\hat X}f(\hat X)].
\]
We then have
\[
V(x) =  \sum_{k \ge 0} \rho^{k}\E\left[  e^{\alpha' \tilde{S}_{k} }  \ind{\tilde{S}_{k} \le  x } \right] \le \sum_{k \ge 0}\E\left[  e^{\alpha' \tilde{S}_{k}} \ind{\tilde{S}_{k} \le  x }\right].
\]
Since $\E[\tilde X] = -\psi_X'(\alpha') > 0$, the two-sided renewal theorem implies as above that
\[
V(x)e^{-\alpha' x} \to 0,\quad x\to\infty,
\]
which finishes the proof of point (i).

To prove assertion (ii), note that for $h = +\infty$, it follows from (i). Hence, by additivity, it is enough to show that for every $h\in\R$,
\begin{equation}\label{eq:renewal-function-lim-toshow}
  \lim_{n \to \infty} \sup_{|x| \le \Gamma_{n}}  \left|\frac{1}{n \, \Lambda((-\infty,x]\times(h,\infty))} \sum_{k=k_{n}(h)+1}^\infty  \lambda^k
 \P \left( S_k \le \frac{1}{\alpha} \ln n + x \right)  -1\right| = 0 .
\end{equation}
We have,
\begin{equation}
\sum_{k = k_n(h)+1 }^{\infty} \lambda^k
 \P \Big( S_k \le \frac{1}{\alpha} \ln n + x \mid S_{k_{n}(h)}  \Big) 
  = \lambda^{k_{n}(h)}    V\Big(  \frac{1}{\alpha} \ln n + x - S_{k_{n}(h)} \Big) .
\end{equation}
 Thus by using the change of measure,  we obtain 
 \begin{align}
\sum_{k = k_n(h)+1 }^{\infty} \lambda^k
 \P \Big( S_k \le \frac{1}{\alpha} \ln n + x  \Big) &= \lambda^{k_{n}(h)} \E \bigg[    V\Big(  \frac{1}{\alpha} \ln n + x - S_{k_{n}(h)} \Big)    \bigg] \\
 &=   n e^{\alpha x}  \,  \E \bigg[  \frac{  V(  \frac{1}{\alpha} \ln n + x - \hat{S}_{k_{n}(h)} )  }{e^{\alpha [  \frac{1}{\alpha} \ln n + x - \hat{S}_{k_{n}(h)} ]}} \bigg] . 
 \end{align}
We set $\xi_n = (\hat{S}_{k_{n}(h)} -  \E[ \hat{S}_{1}] k_{n}(h))/\sqrt{\Var(\hat{S}_1) k_n(h) },$
so that
 \begin{equation}
   \hat{S}_{k_{n}(h)} = \E[ \hat{S}_{1}] k_{n}(h) + \xi_{n}\sqrt{\Var(\hat{S}_1) k_n(h) }
 \end{equation} 
According to the central limit theorem, $\xi_n$ converges in distribution to a standard normal random variable.
Recall that $\E[ \hat{S}_{1}]= |\psi_{X}'(\alpha)|$ and $\Var( \hat{S}_{1} )=\psi_{X}''(\alpha)$, and 
$k_{n}(h)$ is the integer part of $ \frac{1}{\alpha |\psi'_{X}(\alpha)|} \ln n + h \big[\frac{\psi''_{X}(\alpha)}{\alpha |\psi'_{X}(\alpha)|^3}  \ln n \big]^{1/2}$. Writing $c_{\alpha} = \big[ \frac{\psi''_{X}(\alpha)}{\alpha |\psi'_{X}(\alpha)|} \big]^{1/2}$,  simple calculations yield 
\begin{align}
   \frac{1}{\alpha} \ln n + x - \hat{S}_{k_{n}(h) } &= \frac{1}{\alpha} \ln n + x - \Big[ \, \frac{1}{\alpha} \ln n  +  c_{\alpha} (\xi_{n}+h+o_{n}(1)) \sqrt{\ln n  } \, \Big]\\
   & = x - c_{\alpha}  (\xi_{n}+h+o_{n}(1)) \sqrt{\ln n } .
\end{align}
Using \eqref{eq-renewal-function-bnd} and the asymptotics of $V(x)$ for $x\to \pm \infty$ from point (i), we can apply the dominated convergence theorem to get
\begin{equation}
 \lim_{n \to \infty} \E \left[  \frac{  V(  \frac{1}{\alpha} \ln n + x - \hat{S}_{k_{n}(h)} )  }{e^{\alpha [  \frac{1}{\alpha} \ln n + x - \hat{S}_{k_{n}(h)} ]}} \right] =  \lim_{n \to \infty} \P( \xi_{n}+h < 0) \lim_{y \to \infty} \frac{V(y)}{e^{\alpha y}} = (1-\Phi(h))\gamma, 
\end{equation}
uniformly in $|x| \le \Gamma_{n}=o(\sqrt{\log n})$,
where $\Phi(\cdot)$ denotes the c.d.f. of a standard Gaussian distribution.
This completes the proof of \eqref{eq:renewal-function-lim-toshow} and therefore finishes the proof of the lemma.
\end{proof}

\begin{remark}\label{rmk-lattice-2} 
As discussed in Remark \ref{rmk-lattice-1}, when the weight distribution $X$ has a span $d>0$, we should apply the key renewal theorem in arithmetic case. Then  assertion (i) becomes $\lim_{k \to \infty} V(kd)/e^{\alpha kd} = \frac{ \alpha d}{1-e^{-\alpha d}}  \gamma$. Assertion (ii) becomes 
  \begin{equation} 
  \lim_{n \to \infty} \sup_{ j \in \mathbb{Z}, |j| \le \Gamma_{n} }\left| \frac{1}{e^{d \left\lfloor \frac{\ln n}{d \alpha} \right\rfloor  } \Lambda_{d}(R_{j d,h})} \sum_{k=0}^{k_{n}(h)}  \lambda^k
 \P \left( S_k \le d \lfloor \frac{\ln n}{d \alpha} \rfloor + d j\right)  - 1  \right| =0.
\end{equation} 
where  $\Lambda_{d}(R_{j d,h})=  \frac{\gamma\alpha d}{1-e^{-\alpha d}} e^{\alpha j d} \Phi(h) $ by \eqref{Lambda-d}.
\end{remark}

\subsection{First moment asymptotic}
In this subsection we study the asymptotic behavior of $\E[ \mathcal{E}_{n} (R  ) ]$ by using the results on renewal function obtained above.

\begin{lemma}\label{lem-set-A}
  Let $R\in\mathcal R$. Then  
 \begin{equation}
    \lim_{n \to \infty}  \E[ \mathcal{E}_{n} (R ) ] = \Lambda ( R  ) = \gamma e^{\alpha x} \,\Phi(h)   .
 \end{equation}
\end{lemma}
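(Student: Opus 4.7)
I would expand $\E[\mathcal{E}_n(R)]$ by linearity over the potential simple paths from $1$ to $n$. A potential simple path with $k$ edges is specified by an ordered tuple of $k-1$ distinct intermediate vertices from $[n]\setminus\{1,n\}$; there are $(n-2)!/(n-k-1)!$ such tuples, each exists in $\mathcal{G}_n$ independently with probability $(\lambda/n)^k$, and conditionally on its existence has weight distributed as $S_k$. This gives
\begin{equation*}
\E[\mathcal{E}_n(R)] = \sum_{k=1}^{n-1} \frac{(n-2)!}{(n-k-1)!}\Big(\frac{\lambda}{n}\Big)^{\!k} \P\Big(\big(S_k - \tfrac{1}{\alpha}\ln n,\, \tfrac{k-\gamma\ln n}{\sqrt{\beta\ln n}}\big)\in R\Big).
\end{equation*}

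Next, by additivity of $\Lambda$ and of the expectation, it is enough to treat rectangles of the form $R = (-\infty,x]\times(-\infty,h]$ with $h\in\R\cup\{+\infty\}$; for such $R$ the hopcount condition restricts the sum to $k\le k_n(h)$. For $k = O(\log n)$ the combinatorial prefactor satisfies
\[
\frac{(n-2)!}{(n-k-1)!}\Big(\frac{\lambda}{n}\Big)^{\!k} = \frac{\lambda^k}{n}\prod_{j=2}^{k}\Big(1-\tfrac{j}{n}\Big) = \frac{\lambda^k}{n}(1+o(1)),
\]
uniformly in $k$ in this range, since $\log\prod_{j=2}^k(1-j/n) = O((\log n)^2/n)\to 0$. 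Substituting this and applying Lemma~\ref{lem-renewal-function}(ii) with constant $\Gamma_n$ then yields
\[
\E[\mathcal{E}_n(R)] = \frac{1+o(1)}{n}\sum_{k=1}^{k_n(h)} \lambda^k\,\P\big(S_k \le \tfrac{1}{\alpha}\ln n + x\big) \;\longrightarrow\; \gamma e^{\alpha x}\Phi(h) = \Lambda(R).
\]

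The only remaining point is to handle $h=+\infty$, where $k$ may range up to $n-1$ and so one must verify that the contribution from large $k$ is negligible. Since $\psi_X'(\alpha) < 0$, assumption~\eqref{def-alpha} supplies an $\alpha' \in \operatorname{int}\mathcal{D}$ slightly above $\alpha$ with $\rho := \lambda e^{\psi_X(\alpha')} < 1$. The Markov bound $\lambda^k\P(S_k \le y) \le e^{-\alpha' y}\rho^k$, together with the trivial prefactor estimate $\frac{(n-2)!}{(n-k-1)!}(\lambda/n)^k \le \lambda^k/n$, shows that the contribution from $k > k_n(h^+)$ is at most $Cn^{-\alpha'/\alpha-1}e^{-\alpha' x}\rho^{k_n(h^+)}\to 0$ for any fixed $h^+$; and $\Lambda((-\infty,x]\times(h^+,+\infty])\to 0$ as $h^+\to\infty$. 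The main obstacle — the delicate renewal/CLT asymptotics in the moderate-deviation window $k \approx \gamma\log n + O(\sqrt{\log n})$ — is already packaged in Lemma~\ref{lem-renewal-function}(ii), so what remains is essentially bookkeeping: verifying the uniform combinatorial approximation $(n-2)!/(n-k-1)!(\lambda/n)^k = (1+o(1))\lambda^k/n$ on the relevant range of $k$, and trimming the tail via the exponential Chebyshev bound at exponent $\alpha' > \alpha$.
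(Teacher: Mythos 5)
Your proof follows essentially the same route as the paper's: reduce by additivity to rectangles $R_{x,h}=(-\infty,x]\times(-\infty,h]$, expand by linearity over free paths, count paths of a given hopcount, show the combinatorial prefactor is $(1+o(1))\lambda^k/n$ uniformly for $k=O(\log n)$, and invoke Lemma~\ref{lem-renewal-function}(ii). The one place you add something is the explicit tail treatment for $h=+\infty$, and there you have a genuine slip.

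The Markov bound for a lower tail reads $\P(S_k\le y)\le e^{\alpha' y}\E[e^{-\alpha' S_k}]$, so $\lambda^k\P(S_k\le y)\le e^{+\alpha' y}\rho^k$, not $e^{-\alpha' y}\rho^k$. With $y=\tfrac1\alpha\ln n+x$, the corrected tail estimate becomes
\[
\frac1n\sum_{k>k_n(h^+)}\lambda^k\P\Big(S_k\le\tfrac1\alpha\ln n+x\Big)\;\lesssim\; n^{\alpha'/\alpha-1}\,e^{\alpha' x}\,\rho^{k_n(h^+)} \;\asymp\; n^{\,\alpha'/\alpha-1+\gamma\ln\rho+o(1)}.
\]
This exponent is in fact \emph{nonnegative}: writing $\ln(1/\rho)=\psi_X(\alpha)-\psi_X(\alpha')$ and $\gamma=1/(\alpha|\psi_X'(\alpha)|)$, the condition $\alpha'/\alpha-1+\gamma\ln\rho<0$ is equivalent to $\psi_X(\alpha')<\psi_X(\alpha)+\psi_X'(\alpha)(\alpha'-\alpha)$, i.e.\ the log-Laplace transform lying strictly below its tangent at $\alpha$, which contradicts convexity of $\psi_X$. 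So the Chernoff bound at exponent $\alpha'$ cannot show that the tail vanishes for fixed $h^+$ — and indeed it does not vanish: the truth is that $\frac1n\sum_{k>k_n(h^+)}\lambda^k\P(\cdot)\to\gamma e^{\alpha x}(1-\Phi(h^+))$, a positive constant for fixed $h^+$.

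Your overall strategy still works, it just needs the tail estimate replaced. The clean fix is to note that Lemma~\ref{lem-renewal-function}(ii) already covers $h=+\infty$ (the sum there is $V(\tfrac1\alpha\ln n+x)$, handled by part (i)), and the combinatorial prefactor satisfies the trivial bound $\frac{(n-2)!}{(n-k-1)!}(\lambda/n)^k\le\lambda^k/n$ for \emph{all} $k$. Then the contribution from $k>\ln^2 n$ (the range where the uniform $(1+o(1))$ approximation fails) is at most $\frac1n\big(V(\tfrac1\alpha\ln n+x)-\sum_{k\le\ln^2 n}\lambda^k\P(S_k\le\tfrac1\alpha\ln n+x)\big)\to0$ because $\ln^2 n\gg k_n(+\infty)=\gamma\ln n+O(\sqrt{\ln n})$ in the scale of Lemma~\ref{lem-renewal-function}(ii). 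Alternatively, keep your diagonal structure but replace the Chernoff estimate by the subtraction $\frac1n\big(V-\sum_{k\le k_n(h^+)}\big)\to\gamma e^{\alpha x}(1-\Phi(h^+))$, then let $h^+\to\infty$.
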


We begin with some necessary notation.
Let us denote by $\mathcal{P}_{\mathrm{f}}(1,n)$ all the  free paths (or potential paths) from $1$ to $n$:
\begin{equation}
  \mathcal{P}_{\mathrm{f}}(1,n) \coloneq  \left\{ p=(v_{i})_{i=0}^{\ell} : v_0=1, v_{\ell}=n; v_{i } \neq v_{j} \text{ for } i \neq j  \right\}. 
\end{equation}   
 For $p \in \mathcal{P}_{\mathrm{f}}$ and $R\in\mathcal R$,  set
\begin{equation}
  I(p;R) \coloneqq \boldsymbol 1 \left\{  p \in \mathcal{P}(1,n) \text{ and }   \left(  X(p) - \frac{1}{\alpha} \ln n , \frac{H(p)- \gamma \ln n }{\sqrt{\beta \ln n}}  \right) \in R \right\}. 
\end{equation} 
Then we can rewrite $  \mathcal{E}_{n}(R)$ as the sum of a family of dependent Bernoulli variables:
\begin{equation}
  \mathcal{E}_{n}(R) =\sum_{p \in \mathcal{P}_{\mathrm{f}}(1,n)} I(p;R) . 
\end{equation}

\begin{proof}[Proof of Lemma \ref{lem-set-A}]
By linearity of expectation, it is enough to consider $R$ of the form
\[
R = R_{x,h} = (-\infty,x] \times (-\infty,h],
\]
for $x\in\R$, $h\in \R \cup \{+\infty\}$. We have,
 \begin{equation}
\E[\mathcal{E}_{n} (R_{x,h}) ] 
    = \sum_{p \in \mathcal{P}_{\mathrm{f}}(1,n) } \E \left[   I(p;R_{x,h})  \right]  = \sum_{\ell=1}^{ k_{n} (h) } \sum_{p \in \mathcal{P}_{\mathrm{f}}(1,n) , H(p)=\ell } \E \left[   I(p;R_{x,h})  \right]  .
 \end{equation} 
Observe that, for any $p \in \mathcal{P}_{\mathrm{f}}(1,n)$ with $ H(p)=\ell$, the expectation equals
 \begin{equation}
\E \left[   I(p;R_{x,h})  \right] =  \left(\frac{\lambda}{n}\right)^{\ell} \,  \P( S_{\ell} \le \alpha \ln n + x ) .
 \end{equation}
The number of free paths of hopcount $\ell$ is given by 
\begin{equation}
|\{ p \in \mathcal{P}_{\mathrm{f}}(1,n) , H(p)=\ell  \} | = \prod_{j=0}^{\ell-1} (n-2-j)  =(n-2)_{\ell},
\end{equation}
using the Pochhammer symbol.
Combining the previous formulas with the fact that   $\frac{(n-2)_{\ell} }{n^{\ell}} \to 1$  uniformly in $1 \le \ell \le \ln^{2} n$ as $n \to \infty$, it follows that 
 \begin{align}
  \E[\mathcal{E}_{n} (R_{x,h}) ]   
    = [1+o_{n}(1)] \sum_{\ell=1}^{k_{n}(h) }\lambda^{\ell} \P( S_{\ell} \le \alpha \ln n + x ) = [1+o_{n}(1)] \Lambda ( R_{x,h}  ).
  \end{align}
  Above,   we have used assertion (ii) of  Lemma \ref{lem-renewal-function}. We now complete the proof.
\end{proof}

\subsection{Local weak convergence to branching random walk} 
\label{sec:local_weak_convergence}
A key result for our analysis is the local weak convergence of the  sparse Erd\H{o}s--R\'{e}nyi graph $\mathcal{G}(n,\lambda/n)$ to a $\mathrm{Poi}(\lambda)$ Galton–Watson tree, referred to as $\mathrm{PGW}(\lambda)$ tree. 
This has been a quite  standard and widely-used technique. In the following we follow  \cite[Lemma 2.2]{RW10}.

Given fixed  vertices $u,v \in \mathcal{G}_{n}$, we explore  
 their local neighborhoods up to a radius $r_{n}$ using a two-stage breadth-first search (BFS). First, we run a BFS starting from the root $u$.  Starting with $\mathcal{S}_0(u) = \{u\}$ at level $\rho=0$, to find $\mathcal{S}_{\rho+1}(u)$, we iterate through the vertices $w \in \mathcal{S}_{\rho}(u) \subset [n]$ one by one in increasing order of their labels. For each $w$, we examine  all edges from $w$ to vertices not yet reached.
If an edge $(v,w)$ connects to an unvisited vertex $w$, we add $w$ to $\mathcal{S}_{\rho+1}(u)$ and call this edge 'revealed.'  
This process continues until we have identified all vertices in $\mathcal{S}_{\le r_{n}}(u)$.  
Then we perform the same BFS procedure starting from $v$,   but   only exploring vertices that were not discovered in the first search.  This second stage   identifies the vertex set of $\mathcal{S}_{\le r_{n}}(v) \setminus \mathcal{S}_{\le r_{n}}(u) $.
 The two sets of revealed edges from these searches constitute the graphs $\mathcal{S}^{\mathtt{BFS}}_{\leq r_{n}}(u), \mathcal{S}^{\mathtt{BFS}}_{\leq r_{n}}(v)$. By construction, these are both  trees. We treat them as rooted, ordered trees with respective roots $u$ and $v$, where the children of any node are ordered by their increasing labels.

\begin{lemma}\label{lem-local-weak-convergence}
Let $\mathcal{T}$ be a $\mathrm{PGW}(\lambda)$-tree up to height $r_{n}$. Let $\Gamma_{n}$ be any sequence of positive numbers such that $\Gamma_n = o(\sqrt{n})$. Then for any rooted ordered trees $\tau, \tilde{\tau}$ with $|\tau|, |\tilde{\tau}| \le  \Gamma_{n}$,  
\begin{equation}
  \P \left( \mathcal{S}_{\le r_{n}}(1)  = \tau ,  \mathcal{S}_{\le r_{n}}(n)= \tilde{\tau} ,   \mathcal{S}_{\le r_{n}+1}(1) \cap \mathcal{S}_{\le r_{n}}(n) =\emptyset \right) = (1 + o(1)) \P \left( \mathcal{T} = \tau \right)  \P \left( \mathcal{T} = \tilde{\tau} \right) 
\end{equation}
 as $n \to \infty$, where $o(1)$ depends only on $\Gamma_{n}$.

\end{lemma}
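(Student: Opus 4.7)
The plan is to compute the probability on the left-hand side directly by means of the two-stage BFS exploration described just before the lemma, and to compare the resulting product to the probability that two independent PGW$(\lambda)$ trees equal $\tau$ and $\tilde\tau$ respectively. The key observation is that each BFS step amounts to drawing a $\mathrm{Bin}(k,\lambda/n)$ number of children, where $k$ is the number of vertices not yet touched by the exploration, and that a $\mathrm{Bin}(k,\lambda/n)$ distribution with $k=n-O(\Gamma_n)$ is close in total variation to $\mathrm{Poi}(\lambda)$, with a per-step error of order $\Gamma_n/n$.

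First, I would fix rooted ordered trees $\tau,\tilde\tau$ with $|\tau|,|\tilde\tau|\le \Gamma_n$ and write the event in the lemma as the intersection of three sub-events: (a) the first-stage BFS from vertex $1$ produces exactly $\tau$; (b) the second-stage BFS from vertex $n$, using only vertices in $[n]\setminus V(\tau)$, produces exactly $\tilde\tau$; (c) no edge of $\mathcal{G}_n$ joins a vertex of $\mathcal{S}_{r_n}(1)$ to a vertex of $\mathcal{S}_{\le r_n}(n)$ (the extra condition that upgrades disjointness to the condition on $\mathcal{S}_{\le r_n+1}(1)$). Because the edges probed in these three sub-events are pairwise disjoint, their joint probability factorizes as a product of independent Bernoulli-$(\lambda/n)$ trials; see \cite[Lemma 2.2]{RW10} for the same combinatorial set-up.

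For step (a), processing the vertices of $\tau$ in BFS order, at each internal node $w$ of $\tau$ we see a $\mathrm{Bin}(k_w,\lambda/n)$ number of children where $n-|\tau|\le k_w\le n-1$. Hence
\begin{equation}
\P(\text{BFS from }1 = \tau) = \prod_{w\in\tau} \binom{k_w}{c_w}\Bigl(\frac{\lambda}{n}\Bigr)^{c_w}\Bigl(1-\frac{\lambda}{n}\Bigr)^{k_w-c_w},
\end{equation}
where $c_w$ is the number of children of $w$ in $\tau$. Using the elementary bounds $k_w(k_w-1)\cdots(k_w-c_w+1)/n^{c_w} = 1 + O(c_w\Gamma_n/n)$ and $(1-\lambda/n)^{k_w-c_w} = e^{-\lambda}(1+O(\Gamma_n/n))$, together with $\sum_{w} c_w = |\tau|-1 \le \Gamma_n$, this product equals $(1+O(\Gamma_n^2/n))\prod_w \pi_\lambda(c_w) = (1+o(1))\P(\mathcal{T}=\tau)$, where I used $\Gamma_n^2/n = o(1)$. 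Step (b) is entirely analogous, now with $k_w$ in the range $[n-|\tau|-|\tilde\tau|, n-|\tau|-1]$; the same estimate gives $(1+o(1))\P(\mathcal{T}=\tilde\tau)$. For step (c), the conditional probability (given the outcomes of the first two BFS stages) that none of the at most $|\mathcal{S}_{r_n}(1)|\cdot|\mathcal{S}_{\le r_n}(n)|\le \Gamma_n^2$ untouched potential edges is present equals $(1-\lambda/n)^{|\mathcal{S}_{r_n}(1)|\cdot|\mathcal{S}_{\le r_n}(n)|} = 1 + O(\Gamma_n^2/n) = 1+o(1)$.

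Multiplying the three factors yields the claimed asymptotic $(1+o(1))\P(\mathcal{T}=\tau)\P(\mathcal{T}=\tilde\tau)$, with the $o(1)$ depending only on $\Gamma_n$ and not on the particular trees. The main thing to be careful about is the bookkeeping of the relative errors: one needs to check that the cumulative multiplicative error after $|\tau|+|\tilde\tau|\le 2\Gamma_n$ exploration steps plus the additional non-intersection step stays of order $\Gamma_n^2/n$, which is $o(1)$ precisely under the hypothesis $\Gamma_n = o(\sqrt{n})$. No step is truly difficult, but the argument relies on the disjointness of the edge sets probed at each stage of the BFS so that the joint probability is genuinely a product of independent Bernoulli trials.
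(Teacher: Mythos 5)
Your overall strategy — computing the BFS-exploration probability step by step and comparing each binomial factor to its Poisson limit — is exactly the route the paper takes, and your error bookkeeping ($\Gamma_n^2/n$ from multiplying $O(\Gamma_n)$ factors each with a $O(\Gamma_n/n)$ relative error) is correct. However, your opening claim that the event in the lemma \emph{equals} the intersection of (a), (b), (c) is not accurate, and this creates a genuine (if small) gap.

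The event $\{\mathcal{S}_{\le r_n}(1)=\tau\}$ requires the \emph{induced subgraph} on the explored vertex set to be the tree $\tau$, whereas your sub-event (a) — and the product formula $\prod_w \binom{k_w}{c_w}(\lambda/n)^{c_w}(1-\lambda/n)^{k_w-c_w}$ — describes only the event that the BFS tree equals $\tau$. The BFS explicitly does \emph{not} examine edges between vertices that are already visited at the time a given vertex is processed (e.g.\ between siblings, or from a vertex to an already-discovered aunt/uncle), nor edges among the leaves at level $r_n$. Those unexamined within-$\tau$ and within-$\tilde\tau$ edges must be absent for the target event to hold, and (a), (b), (c) place no constraint on them, so $(a)\cap(b)\cap(c)$ is strictly larger than the lemma's event. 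The paper's proof handles this explicitly: after computing the probability that the two BFS trees equal $(\tau,\tilde\tau)$, it conditions on that outcome and multiplies by the probability that \emph{all} remaining unexamined edges among the $|\tau|+|\tilde\tau|$ discovered vertices are absent, which is $(1-\lambda/n)^{O(\Gamma_n^2)}=1+O(\Gamma_n^2/n)$. The fix is therefore trivial — add this factor, which is absorbed into the same $1+o(1)$ — but as written your factorization asserts an equality of events that does not hold, so you only obtain an upper bound on the left-hand side. You should replace the three-way intersection with ``BFS from $1$ $=\tau$'' $\cap$ ``BFS from $n$ $=\tilde\tau$'' $\cap$ ``no unexamined edge among the discovered vertices is present,'' and note that the last piece subsumes your (c).
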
 

\begin{proof} 
Suppose before examining $u$ we have reached $k=k_{u}$ vertices in total so far. Then the probability of revealing $\ell=\ell_{u}$ edges in the next step is 
$\binom{n-k}{\ell} \left(\frac{\lambda}{n}\right)^{a} (1-\frac{\lambda}{n})^{n-k-\ell}$. In the PGW$(\lambda)$ tree a vertex $u$ produces $\ell=\ell_{u}$ children has probability $\frac{\lambda^{\ell}}{\ell !} e^{-\lambda}$.
 The ratio of these probabilities is 
\begin{equation}
 \frac{ (n-k)_{\ell}}{n^{\ell}} \left(1-\frac{\lambda}{n}\right)^{n-k-\ell}  e^{-\lambda} = \exp\big( O({k \ell}/{n}) + O({k + \ell}/{n})\big) . 
\end{equation}
Since $\sum_{u \in \tau \cup \tilde{\tau}} k_{u}\ell_{u} \le 2 \Gamma_{n} \sum_{u \in \tau \cup \tilde{\tau}}  \ell_{u} \le 4 \Gamma_{n}^2=o(n)$,  we get 
\begin{equation}
  \frac{\P \left( (\mathcal{S}^{\mathtt{BFS}}_{\le r_{n}}(1)  ,  \mathcal{S}^{\mathtt{BFS}}_{\le r_{n}}(n)= (\tau, \tilde{\tau}) )   \right) }{\P \left( \mathcal{T} = \tau \right)  \P \left( \mathcal{T} = \tilde{\tau} \right) }= \exp \Big( O( {\Gamma_{n}^2}/{n}) \Big). 
\end{equation}
In addition,  conditionally on $(\mathcal{S}^{\mathtt{BFS}}_{\le r_{n}}(1) , \mathcal{S}^{\mathtt{BFS}}_{\le r_{n}}(n))=( \tau, \tilde{\tau})$, the event $ \{\mathcal{S}_{\le r_{n}}(1)  = \mathcal{S}^{\mathtt{BFS}}_{\le r_{n}}(1)\} \cap \{ \mathcal{S}_{\le r_{n}}(n)= \mathcal{S}^{\mathtt{BFS}}_{\le r_{n}}(1) \} \cap \{  \mathcal{S}_{\le r_{n}+1}(1) \cap \mathcal{S}_{\le r_{n}}(n) = \emptyset \} $ occurs if and only if  none of the unexamined edges between $|\tau|+|\tilde{\tau}|$ vertices found is also present, which is of probability at most $\exp \Big( O( {\Gamma_{n}^2}/{n}) \Big).$ Then the desired result follows.
\end{proof}
 
Consequently, the weighted Erd\H{o}s--R\'{e}nyi graph converges locally to a weighted PGW $(\lambda)$ tree, more commonly known as a branching random walk (BRW). Precisely,
given a $\mathrm{PGW}(\lambda)$ tree $\mathcal{T}$ with root $\varrho$, assign to each edge $e$ an i.i.d. weight $X(e)$ with common distribution $X$. For each $u \in \mathcal{T}$, let $V(u)$ denote the sum of weights on the unique path from $\varrho$ to $u$; and let $|u|$ denote denote its graph distance from the root. Define 
\begin{equation}
  W^{\mathtt{BRW}}_{r_{n}} \coloneq  \sum_{|u|=r_{n}}  e^{-\alpha V(u)} .  
\end{equation}
Then   $( W^{\mathtt{BRW}}_{r_{n}})_{r_{n} \ge 0}$
forms a non-negative martingale, and   converges almost surely to a limit:
\begin{equation}\label{eq-Biggins-martingale-limit}
  W = W^{\mathtt{BRW}}_{\infty} \coloneq  \lim_{n \to \infty}   W^{\mathtt{BRW}}_{r_{n}} \ \text{ a.s. }
\end{equation}

Let $(\tilde{W}^{\mathtt{BRW}}_{r_{n}}, \tilde{W})$ be an independent copy of  $({W}^{\mathtt{BRW}}_{r_{n}}, {W})$.

\begin{corollary}\label{lem-W-r_{n}-W}
As $n \to \infty$, $\P( \mathsf{G}_{n}^{1} ) \to 1$. As a consequence,  
  \begin{equation}\label{eq-w-conv-d}
    \lim_{n \to \infty}  ({W}_{r_{n}}, \tilde{W}_{r_{n}}) =  (W,\tilde{W}) \text{ in distribution. }
  \end{equation}  
In particular, the sequence $  ({W}_{r_{n}}, \tilde{W}_{r_{n}})_{n \ge 1}$ is tight. Moreover, for any $\lambda>1$, with 
\begin{equation}\label{eq-w-b-conv}
  \lim_{n \to \infty}   \ind{\mathsf{G}^{1}_n}\sum_{u \in  
  \mathcal{S}_{r_{n}}(1)} e^{-\alpha X( [1, u])} \ind{ |X( [1, u])|> 2 |\psi_{X}'(\alpha) | r_{n} } = 0  \text{ in probability.}
\end{equation}
\end{corollary}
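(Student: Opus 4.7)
My plan is to establish the three claims of the corollary in sequence, with Lemma~\ref{lem-local-weak-convergence} serving as the bridge between the Erd\H{o}s--R\'enyi neighborhoods and two independent weighted $\mathrm{PGW}(\lambda)$ trees.

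To prove $\P(\mathsf{G}_n^1)\to 1$, I would take $\Gamma_n = (2\lambda)^{r_n}$ in Lemma~\ref{lem-local-weak-convergence}; since $r_n = o(\sqrt{\log n})$, we have $\Gamma_n = e^{o(\sqrt{\log n})} = o(\sqrt n)$, so the lemma applies. Summing over all pairs $(\tau,\tilde\tau)$ of rooted ordered trees with $|\tau|,|\tilde\tau|\le \Gamma_n$ yields $\P(\mathsf{G}_n^1) = (1+o(1))\,\P(|\mathcal T|\le \Gamma_n)^2$, where $\mathcal T$ is a $\mathrm{PGW}(\lambda)$ tree up to generation $r_n$. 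Since $\E[|\mathcal T|] = \sum_{k=0}^{r_n}\lambda^k = O(\lambda^{r_n})$, Markov's inequality gives $\P(|\mathcal T|>\Gamma_n) = O(2^{-r_n}) = o(1)$, and the first claim follows.

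For the distributional convergence $(W_{r_n}, \tilde W_{r_n}) \to (W, \tilde W)$, I would use Lemma~\ref{lem-local-weak-convergence} (with the same $\Gamma_n$) to assert that on $\mathsf{G}_n^1$ the combinatorial $r_n$-neighborhoods of $1$ and $n$ are jointly close in total variation, uniformly in the size bound, to two independent $\mathrm{PGW}(\lambda)$ trees truncated at generation $r_n$. Since the edge weights are i.i.d.\ copies of $X$ in both models, the weighted neighborhoods inherit the same TV closeness. Because $(W_{r_n},\tilde W_{r_n})\,\mathbf 1_{\mathsf{G}_n^1}$ is a measurable function of these weighted neighborhoods, its law is within $o(1)$ in TV of the joint law of $(W^{\mathtt{BRW}}_{r_n},\tilde W^{\mathtt{BRW}}_{r_n})$. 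Biggins' almost-sure martingale convergence then gives $(W^{\mathtt{BRW}}_{r_n},\tilde W^{\mathtt{BRW}}_{r_n}) \to (W,\tilde W)$ almost surely, hence in distribution, which combined with $\P(\mathsf G_n^1)\to 1$ yields~\eqref{eq-w-conv-d}. Tightness is an immediate consequence.

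Finally, for the vanishing of the truncated martingale~\eqref{eq-w-b-conv}, I would use the many-to-one lemma on the BRW side, together with $\lambda e^{\psi_X(\alpha)} = 1$, to rewrite
\begin{equation*}
\E\Big[\sum_{|u|=r_n} e^{-\alpha V(u)}\,\mathbf 1_{\{|V(u)|>2|\psi_X'(\alpha)|r_n\}}\Big] = \P\big(|\hat S_{r_n}|>2|\psi_X'(\alpha)|r_n\big).
\end{equation*}
Using~\eqref{eq:exp_hatS1}, $\E[\hat S_1] = -\psi_X'(\alpha) = |\psi_X'(\alpha)|$, so the weak law of large numbers gives $\hat S_{r_n}/r_n \to |\psi_X'(\alpha)|$ in probability and the right-hand side tends to $0$. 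Thus the BRW sum tends to $0$ in $L^1$, hence in probability, and transferring via the same coupling from Lemma~\ref{lem-local-weak-convergence} (the indicator $\mathbf 1_{\mathsf{G}_n^1}$ places us in the coupling regime) yields~\eqref{eq-w-b-conv}. The only delicate point I anticipate is the simultaneous compatibility of the size bound $(2\lambda)^{r_n}$ built into $\mathsf{G}_n^1$ with the range $\Gamma_n = o(\sqrt n)$ required by Lemma~\ref{lem-local-weak-convergence}; this rests crucially on the hypothesis $r_n = o(\sqrt{\log n})$. Beyond that, the proof is a routine combination of local weak convergence, the many-to-one lemma, and the law of large numbers.
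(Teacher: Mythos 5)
Your proof is correct and follows essentially the same route as the paper: take $\Gamma_n=(2\lambda)^{r_n}=e^{o(\sqrt{\log n})}=o(\sqrt n)$ in Lemma~\ref{lem-local-weak-convergence}, transfer to the independent $\mathrm{PGW}(\lambda)$ trees (Markov for the size bound, then TV closeness for the weighted functionals and Biggins a.s.~convergence for \eqref{eq-w-conv-d}), and use the many-to-one lemma with $\E[\hat S_1]=|\psi_X'(\alpha)|>0$ for \eqref{eq-w-b-conv}. The paper is slightly terser but the decomposition, key lemma, and estimates are the same.
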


\begin{proof}
  The  assertion $\P( \mathsf{G}_{n}^{1} ) \to 1$ follows from the Lemma \ref{lem-local-weak-convergence},  combined with   the fact   $\E|\{u \in \mathcal{T}: |u|=r_{n}\}|=\lambda^{r_{n}}$. For the second assertion \eqref{eq-w-conv-d}, observe that for any rooted ordered trees $\tau, \tilde{\tau}$
  \begin{align}
    & \E[ f ({W}_{r_{n}}, \tilde{W}_{r_{n}}) \mid \mathcal{S}_{\le r_{n}}(1)  = \tau ,  \mathcal{S}_{\le r_{n}}(n)= \tilde{\tau} ,   \mathcal{S}_{\le r_{n}+1}(1) \cap \mathcal{S}_{\le r_{n}}(n) =\emptyset ]  \\
    & =   \E[ f ({W}^{\mathtt{BRW}}_{r_{n}}, \tilde{W}^{\mathtt{BRW}}_{r_{n}}) \mid \mathcal{T}  = \tau ,   \tilde{ \mathcal{T} }= \tilde{\tau}  ].
  \end{align}  
Then it follows from the law of total probability and Lemma \ref{lem-local-weak-convergence}. For the last assertion, recalling the definition of $(\widehat{S}_n)_{n\ge0}$ from the proof of Lemma~\ref{lem-renewal-function}, we have 
\begin{equation*}
\mathbb{E} \left[ \sum_{|u|=r_{n}}  e^{-\alpha V(u)}  \ind{ |V(u)|> 2 |\psi_{X}'(\alpha) | r_{n} } \right]=  \mathbb{P} \left( |\hat{S}_{r_n}|> 2 |\psi_{X}'(\alpha) | r_{n} \right).   
\end{equation*}
This goes to 0 as $n\to\infty$, because $(\widehat{S}_n)_{n\ge0}$ is a random walk with $\E[\widehat{S}_1] = |\psi_X'(\alpha)| > 0$ by \eqref{eq:exp_hatS1}.
This completes the proof.
\end{proof}

We will later need the following:
\begin{lemma}[Biggins, see e.g.~Theorem 3.2 in Shi~\cite{ShiLectureNotes}]
\label{lem:W equals 0}
The events $\{W=0\}$ and $\{|\mathcal{T}|<\infty\}$ coincide almost surely.
\end{lemma}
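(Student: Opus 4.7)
One inclusion is immediate: if $|\mathcal T|<\infty$, then $W^{\mathtt{BRW}}_{r}=0$ for every $r$ exceeding the height of $\mathcal T$, so $W=0$. Thus $\{|\mathcal T|<\infty\}\subseteq\{W=0\}$ almost surely, and it suffices to prove that the two events have the same probability; the nested inclusion then forces almost-sure equality.

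\emph{Smoothing equation and dichotomy.} Decomposing the BRW at the root $\varrho$ of $\mathcal T$, whose $N\sim\mathrm{Poi}(\lambda)$ children are joined by edges of i.i.d.\ weights $X_1,\dots,X_N$, and letting $W^{(1)},\dots,W^{(N)}$ denote the martingale limits of the subtrees rooted at these children, one obtains
\[
W \;=\; \sum_{i=1}^{N} e^{-\alpha X_i}\,W^{(i)},
\]
where the $W^{(i)}$ are i.i.d.\ copies of $W$, independent of $(N,X_1,\dots,X_N)$. Since each summand is a product of a strictly positive factor and a nonnegative $W^{(i)}$, the sum vanishes iff $W^{(i)}=0$ for every $i\le N$. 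Setting $q\coloneq \P(W=0)$, this yields $q=\E[q^N]=\exp(-\lambda(1-q))$. Because $\lambda>1$, this equation has exactly two solutions in $[0,1]$, namely $q=1$ and $q=q^*\coloneq \P(|\mathcal T|<\infty)\in(0,1)$, the extinction probability of the $\mathrm{PGW}(\lambda)$ tree.

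\emph{Ruling out $q=1$.} The process $(W^{\mathtt{BRW}}_r)_{r\ge 0}$ is Biggins' martingale at parameter $\alpha$ for our BRW, whose associated log-Laplace transform is $\widetilde\psi(t)\coloneq \log\lambda+\psi_X(t)$. By \eqref{def-alpha}, $\widetilde\psi(\alpha)=0$ and $\widetilde\psi'(\alpha)=\psi_X'(\alpha)<0$, so the derivative condition
\[
\widetilde\psi(\alpha)-\alpha\,\widetilde\psi'(\alpha) \;=\; -\alpha\,\psi_X'(\alpha) \;>\;0
\]
of Biggins' $L^1$-convergence theorem holds. The companion $L\log L$ moment condition follows from $\alpha\in\mathrm{int}\,\mathcal D$: there exists $\epsilon>0$ with $\E[e^{-(\alpha+\epsilon)X}]<\infty$, which combined with the polynomial moments of the Poisson random variable $N$ makes $\E[W^{\mathtt{BRW}}_1\log^+ W^{\mathtt{BRW}}_1]$ finite via the elementary bound $\log^+ W^{\mathtt{BRW}}_1 \le \log^+ N + \alpha \sum_i X_i^-$. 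Biggins' theorem (Theorem~3.2 in \cite{ShiLectureNotes}) then gives $W^{\mathtt{BRW}}_r\to W$ in $L^1$, hence $\E[W]=1$ and in particular $q<1$. The dichotomy above thus forces $q=q^*$, completing the proof.

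\emph{Main obstacle.} All steps except the application of Biggins' theorem are elementary. The paper's hypothesis $\psi_X'(\alpha)<0$ is precisely what supplies the derivative condition, and the assumption $\alpha\in\mathrm{int}\,\mathcal D$ is precisely what makes the $L\log L$ side-condition routine rather than delicate; once both are in hand the lemma reduces to a citation.
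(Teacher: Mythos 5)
Your proof is correct. The paper itself offers no argument for this lemma --- it is stated as a pure citation to Biggins/Shi --- so your reconstruction fills in exactly what that citation points to. A few remarks on the comparison.

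The one-directional inclusion $\{|\mathcal T|<\infty\}\subseteq\{W=0\}$ is indeed trivial, and your use of the smoothing fixed-point equation $q=e^{-\lambda(1-q)}$ together with $\E[W]=1$ to pin down $\P(W=0)=q^*$ is the standard route. Your verification of the two hypotheses of Biggins' theorem --- the "supercritical'' condition $\widetilde\psi(\alpha)-\alpha\widetilde\psi'(\alpha)=-\alpha\psi_X'(\alpha)>0$, which is exactly the paper's assumption $\psi_X'(\alpha)<0$, and the $L\log L$ condition, handled by $\alpha\in\mathrm{int}\,\mathcal D$ and Poisson moment bounds --- is sound; the bound $\log^+ W^{\mathtt{BRW}}_1\le\log^+ N+\alpha\sum_i X_i^-$ is slightly wasteful (one could use $\max_i X_i^-$), but it suffices.

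One possible economy: the theorem in Shi's notes (and Lyons's spinal-decomposition proof of Biggins' theorem) already packages the conclusion "$W>0$ a.s.\ on survival'' into the equivalence with $\E[W]=1$, so the dichotomy step via the smoothing equation --- while perfectly correct and instructive --- could be replaced by a direct invocation of that part of the theorem. This is just a matter of how much of the cited result one wishes to unpack versus re-derive, and does not affect the validity of your argument.
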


\subsection{Conditioned first moment asymptotic}  
Recall that $\lambda_{n}(R)\coloneq \E [ \mathcal{E}_{n}(R) \mid \mathcal{S}_{\le r_{n}}(1,n) ]$ for $R \subset \mathbb{R}^2$. In this subsection we show the  asymptotic behavior of $\lambda_{n}(R)$.

\begin{lemma}\label{lem-cond-set}
We have for every $R\in\mathcal R$,
  \begin{equation}
 \lim_{n \to \infty}  \left|  \lambda_{n}(R) -  W_{r_{n}} \tilde{W}_{r_{n}}  \Lambda(R) \right|  \ind{\mathsf{G}^{1}_n}  =0  \ \text{ in probability}. 
  \end{equation}
\end{lemma}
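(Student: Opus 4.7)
By additivity of both sides in $R$, it suffices to prove the statement for $R = R_{x,h} := (-\infty,x]\times(-\infty,h]$ with $x\in\R$ and $h\in\R\cup\{+\infty\}$, since every $R\in\mathcal{R}$ is a signed sum of four such rectangles. Throughout I work on $\mathsf{G}^{1}_n$. On this event both balls are trees, and any vertex at graph distance $<r_n$ from $1$ (resp.~$n$) has all its neighbours inside the corresponding ball, for otherwise a vertex at distance $r_n+1$ would be reached. Hence a simple path $p$ from $1$ to $n$ can cross the boundary of a ball only through $\mathcal{S}_{r_n}(1)$ or $\mathcal{S}_{r_n}(n)$, and each portion of $p$ inside a ball is itself a tree path. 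I call $p$ \emph{canonical} if it leaves $\mathcal{S}_{\le r_n}(1)$ exactly once via the unique tree path $[1,u]$ for some $u\in\mathcal{S}_{r_n}(1)$ and enters $\mathcal{S}_{\le r_n}(n)$ exactly once via $[v,n]$ for some $v\in\mathcal{S}_{r_n}(n)$; otherwise $p$ is \emph{non-canonical}. This gives a splitting $\lambda_n = \lambda_n^{\mathrm{can}} + \lambda_n^{\mathrm{non}}$.

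\textbf{Canonical contribution and renewal asymptotics.} A canonical path of hopcount $2r_n + m$ consists of $[1,u]$, an outside middle path of $m$ edges through $m-1$ vertices chosen outside $\mathcal{S}_{\le r_n}(1,n)$, and $[v,n]$. On $\mathsf{G}^{1}_n$ one has $|\mathcal{S}_{\le r_n}(1,n)|\le 2(2\lambda)^{r_n} = n^{o(1)}$, so the number of ordered choices of middle vertices is $(1+o(1))\,n^{m-1}$ uniformly in $m\le k_n(h)$. Conditionally on $\mathcal{F} = \sigma(\mathcal{S}_{\le r_n}(1,n))$, the $m$ middle edges are independent $\mathrm{Bernoulli}(\lambda/n)$ and their weights form an independent random walk $S_m$, so with $y_{u,v} := x - X([1,u]) - X([v,n])$,
\begin{equation}
\label{eq:plan-can}
\lambda_n^{\mathrm{can}}(R_{x,h}) = (1+o(1))\sum_{u\in\mathcal{S}_{r_n}(1)}\sum_{v\in\mathcal{S}_{r_n}(n)}\frac{1}{n}\sum_{m=1}^{k_n(h)-2r_n}\lambda^m\,\P\!\left(S_m \le \tfrac{1}{\alpha}\ln n + y_{u,v}\right).
\end{equation}
I then split the outer double sum into the \emph{typical} part, with $|X([1,u])|\vee|X([v,n])|\le 2|\psi_X'(\alpha)|r_n$, and its complement. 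On the typical part $|y_{u,v}| = o(\sqrt{\log n})$, replacing $k_n(h)-2r_n$ by $k_n(h)$ costs an $o(1)$ continuity adjustment to $\Phi$, and Lemma~\ref{lem-renewal-function}(ii) gives the inner sum equal to $(1+o(1))\gamma\Phi(h)e^{\alpha y_{u,v}}$ uniformly. Factoring $e^{\alpha x}$ and assembling yields
\[
\lambda_n^{\mathrm{can,typ}}(R_{x,h}) = (1+o(1))\,\Lambda(R_{x,h})\,W_{r_n}^{\mathrm{typ}}\,\tilde W_{r_n}^{\mathrm{typ}},
\]
where $W_{r_n}^{\mathrm{typ}}$ is the restriction of the sum defining $W_{r_n}$ to typical $u$. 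By \eqref{eq-w-b-conv} and its analogue at $n$, $W_{r_n}-W_{r_n}^{\mathrm{typ}}\to 0$ and $\tilde W_{r_n}-\tilde W_{r_n}^{\mathrm{typ}}\to 0$ in probability, and $(W_{r_n},\tilde W_{r_n})$ is tight by Corollary~\ref{lem-W-r_{n}-W}, so $\lambda_n^{\mathrm{can,typ}}(R_{x,h}) - \Lambda(R_{x,h})\,W_{r_n}\tilde W_{r_n}\to 0$ in probability.

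\textbf{Remainders and main obstacle.} Combining the monotonicity of $V$ with Lemma~\ref{lem-renewal-function}(i) (the second assertion for $y\to+\infty$ and the third, with $\alpha'>\alpha$, to control $V(y)/e^{\alpha y}$ as $y\to-\infty$) gives the uniform bound $V(y)\le C e^{\alpha y}$ for all $y\in\R$; hence the inner sum in \eqref{eq:plan-can} is bounded by $C'n\,e^{\alpha y_{u,v}}$, and the atypical canonical contribution is dominated by a constant times the atypical part of $W_{r_n}\tilde W_{r_n}$, which vanishes in probability by \eqref{eq-w-b-conv} together with the tightness of $(W_{r_n},\tilde W_{r_n})$. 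For non-canonical paths, each additional exit or re-entry of a ball uses at least one extra boundary vertex and at least one extra outside-to-boundary edge, paying a factor $(2\lambda)^{r_n}\cdot(\lambda/n) = n^{-1+o(1)}$ per re-entry relative to the canonical count; a first-moment estimate analogous to \eqref{eq:plan-can}, with the uniform bound $V(y)\le Ce^{\alpha y}$ replacing the sharp renewal asymptotics and restricted to typical boundary weights, then yields $\lambda_n^{\mathrm{non}}(R_{x,h})\,\ind{\mathsf{G}^{1}_n}\to 0$ in probability (after truncating the tight sequence $(W_{r_n},\tilde W_{r_n})$ at an arbitrary constant $M$). I expect this non-canonical bookkeeping to be the main obstacle: one must enumerate all interleaving patterns of tree arcs in the two balls with outside excursions, sum the resulting geometric series in the number of re-entries, and check that the renewal bound still controls tree arcs traversed between arbitrary pairs of boundary vertices rather than rooted at $1$ or $n$.
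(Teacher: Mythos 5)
Your treatment of the canonical (non‑re‑entering) contribution reproduces the paper's proof essentially step by step: the same reduction to rectangles $R_{x,h}$, the same conditional expectation $(\lambda/n)^m\,\P(S_m\le \frac1\alpha\ln n+y_{u,v})$, the same split into typical and atypical boundary weights, the same use of Lemma~\ref{lem-renewal-function}(ii) on the typical set and the crude bound $V(y)\le Ce^{\alpha y}$ on its complement, and the same conclusion via tightness of $(W_{r_n},\tilde W_{r_n})$ and the vanishing of the ``bad'' pieces from \eqref{eq-w-b-conv}. Your explicit accounting of the $k_n(h)-2r_n$ vs.\ $k_n(h)$ shift and of the uniform bound $V(y)\le Ce^{\alpha y}$ for $y<0$ (via the $\alpha'$-decay in Lemma~\ref{lem-renewal-function}(i)) is a little more careful than the paper's write‑up, but the ideas are identical.

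The genuinely new element in your write‑up is the distinction between canonical and non‑canonical paths. You are right that this is a real issue: on $\mathsf{G}^1_n$ a simple path from $1$ to $n$ can exit $\mathcal S_{\le r_n}(1)$ at $u_1$, return at a boundary vertex $u_2$, traverse a tree arc to $u_3$, and leave again, as long as the common ancestor of $u_2,u_3$ is off $[1,u_1]$; such paths are absent from every $\mathcal P_{u,v}(\ell)$, so the paper's asserted equality \eqref{def-B-1-n} is strictly speaking too strong and the paper does not discuss these paths at all. So you have spotted a gap in the paper. However, your handling of it is only an outline, and the obstacle you flag is not cosmetic: the weight of a tree arc between two \emph{arbitrary} boundary vertices enters the renewal bound as $e^{-\alpha X([u_2,u_3]_{\mathrm{tree}})}$, and the sum $\sum_{u_2,u_3\in\mathcal S_{r_n}(1)}e^{-\alpha X([u_2,u_3]_{\mathrm{tree}})}$ is controlled neither by $W_{r_n}$ nor by $\mathsf{G}^1_n$ alone, because arcs go \emph{through} an ancestor and pick up a factor $e^{2\alpha V(u_2\wedge u_3)}$; this cannot be bounded by a first moment without assuming $-2\alpha\in\mathcal D$, which the paper does not. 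One genuinely has to import an additional truncation or use a further good event (e.g.\ $\mathsf{G}^2_n$ forces $X([1,u])\ge s^*r_n/2$ for every boundary $u$, and a variant would control the arc weights) before multiplying by the $n^{-1+o(1)}$ penalty per re‑entry. As written, neither your argument nor the paper's closes this step; you merely have the advantage of having noticed it.
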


Before presenting the proof, we  first introduce some necessary notation.
Since on    $\mathsf{G}^{1}_n$, $ \mathcal{S}_{\le r_{n}}(1, n) $  is a forest of two trees,  whose leaves are, respectively, $\mathcal S_{r_n}(1)$ and $\mathcal S_{r_n}(n)$,
any path $p$ from vertex $1$ and to vertex $n$ in $\mathcal{G}_{n}$ can be viewed as a path from some vertex  $u \in \mathcal{S}_{r_{n}}(1)$ to some vertex $v \in \mathcal{S}_{r_{n}}(n)$. Precisely, whenever $\mathsf{G}^{1}_{n}$ occurs, we can decompose $\mathcal{P}(1,n)$ as follows:
\begin{equation}\label{def-B-1-n}
\mathcal{P}(1,n) = \bigcup_{\ell \ge 2} \bigcup_{u \in \mathcal{S}_{r_{n}}(1)}    \bigcup_{v \in \mathcal{S}_{r_{n}}(n)}  \, \mathcal{P}_{u,v}(\ell; \mathcal{S}_{\le r_{n}}(1,n)) ,   
\end{equation} 
 where for each $u \in \mathcal{S}_{r_{n}}(1)$ and $v \in \mathcal{S}_{r_{n}}(n)$, 
 \begin{align}
  \mathcal{P}_{u,v}(\ell; \mathcal{S}_{\le r_{n}}(1,n)) &\coloneq  \Big\{  [1,v_0]_{\mathcal{S}_{\le r_{n}}(1)} \oplus (v_{i})_{i=1}^{\ell-1} \oplus [v_{\ell},n]_{\mathcal{S}_{\le r_{n}}(n)} : \\
 & \qquad v_{0}=u, v_{\ell}=v,  v_{i} \in [n] \setminus \mathcal{S}_{\le r_{n}}(1,n), v_{i} \neq v_{j} \text{ for } i \neq j \Big\}. \label{eq-def-b-u-v-l}
 \end{align} 
For brevity, we write $\mathcal{P}_{u,v}(\ell)$ instead of $\mathcal{P}_{u,v}(\ell; \mathcal{S}_{\le r_{n}}(1,n))$ when the context is clear.
With this notation, $\mathcal{E}_{n}(R)$ can be expressed as
\begin{equation}\label{eq-expression-EnR}
 \mathcal{E}_{n}(R) =  \sum_{u \in \mathcal{S}_{r_{n}}(1)}  \sum_{v \in \mathcal{S}_{r_{n}}(n)} \sum_{\ell \ge 2} \,\sum_{p \in \mathcal{P}_{u,v}(\ell)} I(p; R)  \quad \text{ on } \mathsf{G}^{1}_n. 
\end{equation}
\begin{figure}[tbp]
  \includegraphics[width=0.95\textwidth]{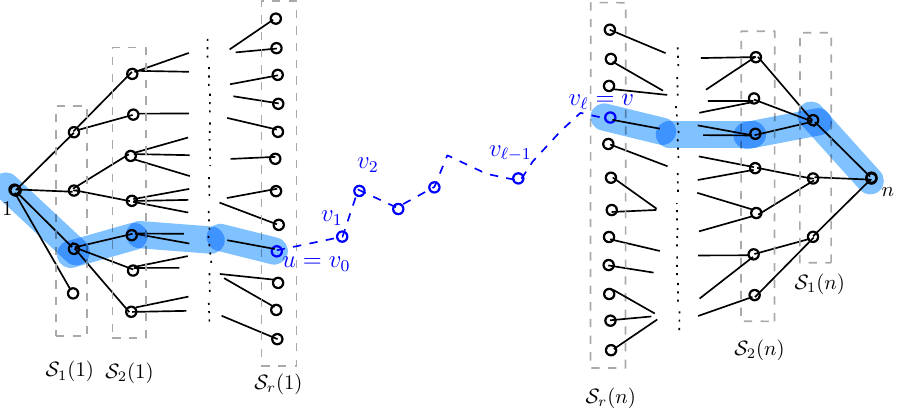}
  \caption{Illustration of a free path $p=[1,u] \oplus (v_{i})_{i=1}^{\ell-1} \oplus [v,n] \in \mathcal{P}_{u,v}(\ell)$.}
\end{figure} 
 
\begin{proof}[Proof of Lemma \ref{lem-cond-set}]
  As in the proof of Lemma~\ref{lem-set-A}, we can assume that $R = R_{x,h} = (-\infty,x]\times (-\infty,h]$, for $x\in\R^{-\infty}$ and $h\in \R^{-\infty,+\infty}$.
  First, we calculate the expectation $\E[I(p;R_{x,h})]$ for any given   $p \in \mathcal{P}_{u,v}(\ell)$ where $u \in \mathcal{S}_{ r_{n}}(1)$, $v \in \mathcal{S}_{ r_{n}}(n)$, and $\ell \ge 2 $.
  Note that by the choice of $R_{x,h}$, this expectation vanishes whenever $\ell > k_{n}(h)$. For notational convenience, define 
   \begin{equation}\label{eq-def-X-F}
  \begin{aligned}
  \mathsf{X}_{u,v} & \coloneq X([1,u]) + X([v,n]) \ , \ \text{ and } \\
  \mathsf{F}(  \ell,n ,y ) & \coloneq  \P \left(  S_{\ell} \le \frac{1}{\alpha} \ln n + x - y  \right)  \text{ for } y \in \mathbb{R}.
  \end{aligned}
  \end{equation}
Then the conditional expectation is given by 
\begin{equation}\label{eq-exp-I-p-R}
  \E \left[  I(p;R_{x,h}) \mid \mathcal{S}_{\le r_{n}}(1,n)\right] \ind{  \mathsf{G}^{1}_n } =    \left( \frac{\lambda}{n} \right)^{\ell}  \mathsf{F}( \ell, n, \mathsf{X}_{u,v} )  \ind{ \mathsf{G}^{1}_n }  \ind{\ell \le k_{n}(h)}. 
\end{equation} 
Notice that for any  $u \in \mathcal{S}_{ r_{n}}(1)$,  $v \in \mathcal{S}_{ r_{n}}(n)$ and $\ell \ge 2$, 
 \begin{equation}
 |  \mathcal{P}_{u,v}(\ell) | = (n- |\mathcal{S}_{\le r_{n}}(1,n)|)_\ell= [1+o_{n}(1)]   n^{\ell} \text{ on } \mathsf{G}^{1}_{n} .
 \end{equation}
 where  $o_{n}(1) \to 0$ as $n \to \infty$ uniformly in $u,v$ and  $1\le \ell \le \ln^2 n$.   Combined this with \eqref{eq-expression-EnR} and \eqref{eq-exp-I-p-R}, we obtain 
\begin{align}
\lambda_{n}(R_{x,h})  \ind{\mathsf{G}^{1}_n}  & = [ 1+o_{n}(1) ] \sum_{u \in \mathcal{S}_{r_{n}}(1)}  \sum_{v \in \mathcal{S}_{r_{n}}(n)} \frac{1}{n}\sum_{\ell=2}^{k_{n}(h)} \lambda^{\ell} \mathsf{F}( \ell, n, \mathsf{X}_{u,v} )    \ind{\mathsf{G}^{1}_n} .
 \end{align}  

 Let $\mathcal{S}^{\mathrm{g}} $ denote the set of pairs $(u,v) \in \mathcal{S}_{r_{n}}(1) \times  \mathcal{S}_{r_{n}}(n) $ satisfying $|X([1,u])|   \le2  |\psi'_{X}(\alpha)| r_{n}$ and $|X([v,n])|   \le2  |\psi'_{X}(\alpha)| r_{n}$. Set $\mathcal{S}^{\mathrm{b}}  = (\mathcal{S}_{r_{n}}(1) \times \mathcal{S}_{r_{n}}(n)) \setminus  \mathcal{S}^{\mathrm{g}} $.  
 By using part (ii) of Lemma \ref{lem-renewal-function}, since $r_{n}=o(\sqrt{\log n})$ by \eqref{eq:r_n}, we have    $\frac{1}{n}\sum_{\ell=2}^{k_{n}(h)} \lambda^{\ell} \mathsf{F}( \ell, n, \mathsf{X}_{u,v} )  = [1+o_{n}(1)] e^{- \alpha \mathsf{X}_{u,v}} \Lambda(R_{x,h})$ uniformly in $(u,v) \in \mathcal{S}^{\mathrm{g}}$. While for $(u,v) \in \mathcal{S}^{\mathrm{b}}$,   inequality \eqref{eq-renewal-function-bnd} gives $\frac{1}{n}\sum_{\ell=2}^{k_{n}(h)} \lambda^{\ell} \mathsf{F}( \ell, n, \mathsf{X}_{u,v} )  \le C e^{-\alpha \mathsf{X}_{u,v}} e^{\alpha x}$. Employing  these bounds, we deduce that on  $\mathsf{G}^{1}_{n}$, 
 \begin{equation*}
  \lambda_{n}(R_{x,h}) = [1+o_{n}(1) ]
\sum_{(u,v) \in \mathcal{S}^{\mathrm{g}}}  e^{- \alpha \mathsf{X}_{u,v}} \Lambda(R_{x,h})  + O\Big(  e^{\alpha x} \sum_{(u,v) \in \mathcal{S}^{\mathrm{b}}}   e^{- \alpha \mathsf{X}_{u,v}} \Big). 
 \end{equation*}
Note that $\sum_{(u,v)}  e^{- \alpha \mathsf{X}_{u,v}} \ind{\mathsf{G}^{1}_{n}} = W_{r_{n}}  \tilde{W}_{r_{n}} $. So we can rewrite the previous expression as, on $\mathsf{G}^{1}_{n}$,
\begin{equation*}
  \lambda_{n}(R_{x,h}) = [1+o_{n}(1)]  W_{r_{n}}  \tilde{W}_{r_{n}} \Lambda(R_{x,h})   +  e^{\alpha x} O\Big(  W^{\mathrm{b}}_{r_{n}} \tilde{W} _{r_{n}}  + W_{r_{n}} \tilde{W}^{\mathrm{b}}_{r_{n}}  \Big). 
 \end{equation*} 
 where $W^{\mathrm{b}}_{r_{n}}\coloneq \ind{\mathsf{G}^{1}_{n}} \sum_{u \in \mathcal{S}_{r_{n}} (1 )} e^{- \alpha X([1,u]) } \ind{|X([1,u])|> 2 |\psi'_{X}(\alpha)|r_{n}} $ and $\tilde{W}^{\mathrm{b}}_{r_{n}}$ is defined analogously.
According to Corollary \ref{lem-W-r_{n}-W}, $W_{r_{n}} , \tilde{W}_{r_{n}} $ are tight and $W_{r_{n}}^{\mathrm{b}} ,\tilde{W}_{r_{n}}^{\mathrm{b}}\to 0$ in probability as $n \to \infty$. Thus the desired result follows.
\end{proof}

\section{Limiting Extremal Process}

\subsection{Good events}\label{sec-good}
In this subsection we do some preparation work for the proof of Theorem \ref{thm1}.  We introduce two additional good events. First, define the constant
\begin{equation}\label{eq-def-s-star}
  s^{*} \coloneq  - \inf_{t > \alpha} \frac{\psi_{X}(t)+\ln \lambda }{t}  > 0 .
\end{equation}
The positivity of $s^*$ is guaranteed 
by assumption \eqref{def-alpha}.
We then define the event
\begin{equation}
   \mathsf{G}^{2}_{n} \coloneq   \Big\{ \, \forall \text{ path } p \text{ in $\mathcal{G}_{n}$ starting from $1$ or $n$, if } H(p)  \ge r_{n}, \text{ then }  X(p) \ge \frac{ s^{*} }{2} H(p)  \Big\} . 
\end{equation} 
Finally, with $\alpha'>\alpha$ as in Lemma~\ref{lem-renewal-function}, define
\begin{align} \mathsf{G}^{3}_{n}  &\coloneq   \Big\{  \, \forall \text{ path } p \text{ in $\mathcal{G}_{n}$ },  
  \,  X(p) > -\frac 1 {\alpha'} \ln n  \Big\}  
\end{align}  
and set 
\begin{equation}
    \mathsf{G}_{n} \coloneq   \mathsf{G}^{1}_{n} \cap  \mathsf{G}^{2}_{n} \cap  \mathsf{G}^{3}_{n} . 
\end{equation}   

\begin{lemma}\label{lem-good-event} The good event $ \mathsf{G}_{n}$ occurs with high probability. Precisely, we have 
  \begin{equation}
    \limsup_{n \to \infty} \P( \mathsf{G}_{n}^{c} ) = 0. 
  \end{equation}
\end{lemma}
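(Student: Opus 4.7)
The plan is to union bound $\P(\mathsf{G}_n^c) \le \P((\mathsf{G}^1_n)^c) + \P((\mathsf{G}^2_n)^c) + \P((\mathsf{G}^3_n)^c)$ and control each term by a separate first-moment computation. All three estimates boil down to counting free paths (or potential tree configurations), weighting them by the probability of their realization in $\mathcal{G}_n$, and applying an exponential Markov inequality to the sum of edge weights.

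For $\mathsf{G}^1_n$, I would feed Lemma~\ref{lem-local-weak-convergence} with the auxiliary sequence $\Gamma_n = (2\lambda)^{r_n}$, which is $o(\sqrt{n})$ thanks to $r_n = o(\sqrt{\log n})$. Summing the identity of that lemma over rooted ordered trees $\tau,\tilde\tau$ of size at most $\Gamma_n$ gives $\P(\mathsf{G}^1_n) \ge (1+o(1))\,\P(|\mathcal{T}_{r_n}| \le (2\lambda)^{r_n})^2$, where $\mathcal{T}_{r_n}$ is a $\mathrm{PGW}(\lambda)$ tree truncated at height $r_n$ (the separation condition $\mathcal{S}_{\le r_n+1}(1) \cap \mathcal{S}_{\le r_n}(n) = \emptyset$ appearing in that lemma is in fact stronger than the forest-of-two-trees property in $\mathsf{G}^1_n$). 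Markov's inequality combined with $\mathbf{E}|\mathcal{T}_{r_n}| = \sum_{k=0}^{r_n}\lambda^k = O(\lambda^{r_n})$ yields $\P(|\mathcal{T}_{r_n}| > (2\lambda)^{r_n}) = O(2^{-r_n}) \to 0$.

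For $\mathsf{G}^2_n$, I would bound the expected number of paths $p$ starting at vertex $1$ or $n$ of length $\ell \ge r_n$ with $X(p) < (s^*/2)\,\ell$. There are at most $n^\ell$ ordered vertex sequences of length $\ell+1$ from a fixed source, each realized as a path in $\mathcal{G}_n$ with probability $(\lambda/n)^\ell$; a Chernoff bound at any $t > \alpha$ gives $\P(S_\ell \le (s^*/2)\ell) \le e^{\ell(ts^*/2 + \psi_X(t))}$, so the expected count is at most $e^{\ell(\ln\lambda + ts^*/2 + \psi_X(t))}$. By the very definition $s^* = -\inf_{t>\alpha}(\psi_X(t)+\ln\lambda)/t$, there exists $t > \alpha$ with $(\psi_X(t)+\ln\lambda)/t < -s^*/2$, making the bound $e^{-\delta \ell}$ for some $\delta > 0$. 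Summing over $\ell \ge r_n$ and doubling for the symmetric contribution from $n$ gives $O(e^{-\delta r_n}) \to 0$.

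For $\mathsf{G}^3_n$, the plan is similar but more delicate, and this is the step I expect to be the main obstacle. The event fails as soon as some simple path in $\mathcal{G}_n$, possibly of length one, has weight below $-\frac{1}{\alpha'}\ln n$, so a Chernoff exponent strictly larger than $\alpha'$ is required in order to make a union bound over all edges summable. I would first refine the choice of $\alpha'$ from Lemma~\ref{lem-renewal-function} so that $\alpha' \in \operatorname{int}\mathcal{D}$, which is possible because $\alpha \in \operatorname{int}\mathcal{D}$ and the conditions $\psi_X(\alpha')<\psi_X(\alpha)$, $\psi_X'(\alpha')<0$ persist on a right-neighborhood of $\alpha$. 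I would then pick $\beta \in \mathcal{D}$ with $\beta > \alpha'$ and $\psi_X(\beta)+\ln\lambda < 0$: at $\beta = \alpha$ the latter quantity vanishes, with derivative $\psi_X'(\alpha)<0$, so it remains strictly negative on $(\alpha, \alpha+\eta)$ for some $\eta > 0$, and it suffices to have chosen $\alpha' < \alpha + \eta$. A Chernoff bound at exponent $\beta$ then gives $\P(S_\ell \le -\tfrac{1}{\alpha'}\ln n) \le n^{-\beta/\alpha'} e^{\ell \psi_X(\beta)}$, so the expected number of offending length-$\ell$ paths is at most $n^{\ell+1}(\lambda/n)^\ell \cdot n^{-\beta/\alpha'} e^{\ell\psi_X(\beta)} = n^{1-\beta/\alpha'}(\lambda e^{\psi_X(\beta)})^\ell$. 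Summing over $\ell \ge 1$ yields $O(n^{1-\beta/\alpha'}) \to 0$ since $\beta > \alpha'$. Once the joint choice of $\alpha'$ and $\beta$ is made, the rest is bookkeeping.
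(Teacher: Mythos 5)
Your proof is correct, and for $\mathsf{G}^1_n$ and $\mathsf{G}^2_n$ it follows exactly the route the paper takes (local weak convergence plus Markov, and a Chernoff bound keyed to the defining optimization of $s^*$, respectively). The genuine divergence is in $\mathsf{G}^3_n$: the paper simply upper-bounds $\P((\mathsf{G}^3_n)^c)$ by $n\,V(-\tfrac{1}{\alpha'}\ln n)$ and then invokes the third statement of Lemma~\ref{lem-renewal-function}(i), namely $V(x) = o(e^{\alpha' x})$ as $x\to-\infty$, which kills the factor of $n$ outright. You instead unwind that renewal-function tail bound into a bare-hands Chernoff argument with a carefully selected second exponent $\beta \in (\alpha', \alpha+\eta) \cap \mathcal D$ satisfying $\psi_X(\beta) + \ln\lambda < 0$, yielding $O(n^{1-\beta/\alpha'}) \to 0$. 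The two arguments are mathematically equivalent — the renewal estimate in Lemma~\ref{lem-renewal-function}(i) is itself proved by tilting to exponent $\alpha'$ and exploiting $\psi_X(\alpha') < \psi_X(\alpha)$, which is the same underlying mechanism as your $\beta$-Chernoff bound — but the paper's version is shorter because it reuses an already-established estimate, while yours is more self-contained and makes the polynomial rate $n^{1-\beta/\alpha'}$ explicit. One small refinement you identified correctly: to make your argument close, $\alpha'$ must be chosen close enough to $\alpha$ (inside $(\alpha,\alpha+\eta)$ with $\eta$ small enough that $\psi_X + \ln\lambda$ stays negative and $\operatorname{int}\mathcal D$ is not exited), so that there is room for a second exponent $\beta > \alpha'$ in the same window. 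This is harmless since the paper only requires $\alpha' > \alpha$, $\psi_X(\alpha') < \psi_X(\alpha)$, $\psi_X'(\alpha') < 0$, all of which persist arbitrarily close to $\alpha$.
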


\begin{proof} In Corollary \ref{lem-W-r_{n}-W} we have proved $ \P((\mathsf{G}_{n}^{1})^{c} )\to 0$. Now let us prove  $\P( (\mathsf{G}_{n}^{2})^{c} )\to 0$. By applying the union bound, we get 
  \begin{align}
    \P(  (\mathsf{G}_{n}^{2})^{c} ) & \le 2\sum_{j =2}^{n} \sum_{\ell \ge r_{n} } \sum_{p \in \mathcal{P}_{\mathrm{f}}(1,j), H(p)=\ell} \P( p \in \mathcal{P}(1,j) ,  X(p) \le  s^{*}\ell /2)  \\ 
    & \le 2 \sum_{\ell \ge r_{n} } n n^{\ell-1} \left(\frac{\lambda}{n} \right)^{\ell} \P( S_{\ell} \le  s^{*}\ell /2)  =  2 \sum_{\ell \ge r_{n} }\lambda^{\ell} \P( S_{\ell} \le  s^{*}\ell /2) .
  \end{align}
Above we have used  $| \{ p \in \mathcal{P}_{\mathrm{f}}(1,j), H(p)=\ell \} | \le n^{\ell-1} $. By employing the Chernoff bound, it follows that for $t>0$
\begin{equation}
  \lambda^{\ell} \P( S_{\ell} \le  s^{*}\ell /2) \le \exp \left( \ell [\psi_{X}(t) + \ln \lambda +s^{*}t/2  ]  \right)
\end{equation}
According to the definition \eqref{eq-def-s-star} of $s^{*}$, there exist  $t_o >\alpha$ and a constant $c_o$ such that  
\begin{equation}
 \psi_{X}(t_o) + \ln \lambda +s^{*}t_o/2    < - c_o <0 .
\end{equation}
Hence the required result $ \P(  (\mathsf{G}_{n}^{2})^{c} ) \le 2 \sum_{\ell \ge r_{n} } e^{- c_o \ell} =O( e^{-c_o r_{n}}) =o(1)$ follows. 

In order to bound  $ \P((\mathsf{G}_{n}^{3})^{c})$, by applying the union bound again, we derive 
\begin{align}
    \P(  (\mathsf{G}_{n}^{3})^{c} ) & \le  \sum_{i,j} \sum_{\ell \ge 1 } \sum_{p \in \mathcal{P}_{\mathrm{f}}(i,j), H(p)=\ell} \P\left( p \in \mathcal{P}(1,j) ,  X(p) \le -\frac 1 {\alpha'} \ln n\right)  \\ 
    & \le   n \sum_{\ell \ge 1}\lambda^{\ell} \P\left( S_{\ell} \le -\frac 1 {\alpha'} \ln n  \right) = nV\left(-\frac 1 {\alpha'} \ln n\right).
  \end{align} 
Part (i) of Lemma \ref{eq-renewal-function-bnd} now implies that $\P(  (\mathsf{G}_{n}^{3})^{c} )\to 0$ as $n\to\infty$. This completes  the proof. 
\end{proof}

\subsection{Convergence of extremal process} In this and the next subsection, we  lay the groundwork for the proof of the main theorem by proving the following convergence result of the conditional avoidance function. Recall that 
\[
\pi_\lambda(k) = e^{-\lambda}\frac{\lambda^k}{k!},\quad k\ge 0.
\]

\begin{lemma}\label{lem-avoidance-con} 
Let $R\in\mathcal R$. Then for every $k\ge 0$,
  \begin{equation}
 \lim_{n \to \infty} \left| \P\left[  \mathsf{G}_{n} ,  \mathcal{E}_{n}(R) = k   \mid \mathcal{S}_{\le r_{n}}(1,n) \right] -  \pi_{\lambda_{n}(R) }(k)    \right| = 0 \text{ in probability.}
  \end{equation}
\end{lemma}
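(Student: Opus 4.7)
The plan is to apply the conditional Chen--Stein bound of Lemma \ref{Stein_method} with $Y = \mathcal{E}_n(R) = \sum_{p \in \mathcal{P}_{\mathrm{f}}(1,n)} I(p;R)$, conditioning $\sigma$-algebra $\mathcal{F} = \sigma(\mathcal{S}_{\le r_n}(1,n))$, indicator $\chi = \mathbf{1}_{\mathsf{G}_n}$, and, for each free path $p$, dissociating neighborhood $\mathcal{N}_p$ consisting of all $p' \in \mathcal{P}_{\mathrm{f}}(1,n) \setminus \{p\}$ that share at least one edge with $p$ lying outside $\mathcal{S}_{\le r_n}(1,n)$. The conditional dissociation requirement is immediate: given $\mathcal{F}$, both the event $\{p \in \mathcal{P}(1,n)\}$ and the weight $X(p)$ are measurable with respect to the edge random variables attached to edges of $p$ outside $\mathcal{S}_{\le r_n}(1,n)$, so $I(p;R)$ is conditionally independent of $\{I(p';R) : p' \notin \bar{\mathcal{N}}_p\}$. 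With these choices the Poisson parameter produced by the lemma is exactly $\lambda_n(R)$.

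Lemma \ref{Stein_method} then yields
\begin{equation}
\mathrm{d}_{\mathrm{TV},\mathcal{F}}\bigl(\mathbf{1}_{\mathsf{G}_n}\mathcal{E}_n(R),\,\mathrm{Poi}(\lambda_n(R))\bigr) \le 2\,\P_{\mathcal{F}}(\mathsf{G}_n^c) + \Sigma_1(n) + \Sigma_2(n),
\end{equation}
where $\Sigma_1(n) = \sum_p \sum_{p' \in \bar{\mathcal{N}}_p} \E_{\mathcal{F}}[I(p;R)]\E_{\mathcal{F}}[I(p';R)]$ and $\Sigma_2(n) = \sum_p \sum_{p' \in \mathcal{N}_p} \E_{\mathcal{F}}[I(p;R)I(p';R)\mathbf{1}_{\mathsf{G}_n}]$. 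By Lemma \ref{lem-good-event}, $\P(\mathsf{G}_n^c) \to 0$, and hence $\P_{\mathcal{F}}(\mathsf{G}_n^c) \to 0$ in probability. The forthcoming Lemmas \ref{lem-correlation-1} and \ref{lem-correlation-2} are exactly the statements that $\Sigma_1(n)$ and $\Sigma_2(n)$ vanish (in probability, via $L^1$ bounds obtained from the renewal estimates of Lemma \ref{lem-renewal-function}); the missing diagonal contribution $\sum_p \E_{\mathcal{F}}[I(p;R)]^2$ in $\Sigma_1$ is controlled by $\max_p \E_{\mathcal{F}}[I(p;R)] \cdot \lambda_n(R) = o(1)$. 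Thus the whole Chen--Stein bound converges to $0$ in probability.

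To deduce the lemma, specialize the supremum in the total-variation distance to the singleton $A = \{k\}$. For $k \ge 1$, $\{\mathbf{1}_{\mathsf{G}_n}\mathcal{E}_n(R) = k\} = \{\mathsf{G}_n,\,\mathcal{E}_n(R) = k\}$, so the required difference is bounded directly by $\mathrm{d}_{\mathrm{TV},\mathcal{F}}$. For $k = 0$, the identity $\P_{\mathcal{F}}(\mathsf{G}_n,\mathcal{E}_n(R)=0) = \P_{\mathcal{F}}(\mathbf{1}_{\mathsf{G}_n}\mathcal{E}_n(R)=0) - \P_{\mathcal{F}}(\mathsf{G}_n^c)$ reduces the estimate to the same total-variation bound plus an extra $\P_{\mathcal{F}}(\mathsf{G}_n^c)$ term, which vanishes in probability.

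The main obstacle in this scheme is not the assembly carried out above, but the input Lemma \ref{lem-correlation-2}. Because the Biggins martingale limit $W$ only has polynomial tails in the signed-weight regime, when $\psi_X(2\alpha) \ge \psi_X(\alpha)$ one cannot hope to control $\E[I(p;R)I(p';R)]$ summed over overlapping pairs uniformly, and the truncation by $\mathbf{1}_{\mathsf{G}_n}$ (via the polynomial growth guarantees of $\mathsf{G}_n^{2}$ and the lower-tail cutoff of $\mathsf{G}_n^{3}$) is precisely what makes the correlation sum summable. This is the step where the proof genuinely departs from the non-negative-weight analysis of \cite{DSS23}, and where the introduction of the indicator $\chi$ into the Chen--Stein bound pays off.
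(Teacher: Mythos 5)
Your plan — Chen--Stein with $\chi = \ind{\mathsf{G}_n}$, $\mathcal{F} = \mathcal{S}_{\le r_n}(1,n)$, and neighborhoods determined by shared edges outside $\mathcal{S}_{\le r_n}(1,n)$, fed by the two correlation lemmas — is exactly the paper's approach. But there is one real gap in the way you assemble it.

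You apply Lemma~\ref{Stein_method} directly to $Y = \mathcal{E}_n(R) = \sum_{p \in \mathcal{P}_{\mathrm{f}}(1,n)} I(p;R)$, so your $\Sigma_1(n)$ and $\Sigma_2(n)$ range over \emph{all} free paths, and likewise your dissociating neighborhood $\mathcal{N}_p$ has no hopcount restriction. Then you assert that Lemmas~\ref{lem-correlation-1} and~\ref{lem-correlation-2} ``are exactly the statements that $\Sigma_1(n)$ and $\Sigma_2(n)$ vanish.'' They are not: in the paper, both the outer sum in $\Sigma_{\eqref{correlation-1}}, \Sigma_{\eqref{correlation-2}}$ and the definition of $\mathcal{N}_p$ carry the restriction $H(p), H(p') \le 2\gamma\ln n$. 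This truncation is used in an essential way in the counting arguments — it is what lets one bound the $\ell \ell'$ (resp.\ $(\ell\ell')^q$) factors coming from the choice of shared-journey indices by $(2\gamma\ln n)^{2}$ (resp.\ $(4\gamma\ln n)^{2q}$), producing the decaying prefactor $(\ln n)^2/n$ in Lemma~\ref{lem-correlation-1} and the geometric series in Lemma~\ref{lem-correlation-2}. For rectangles $R$ with $h' = +\infty$ (which Lemma~\ref{lem:kallenberg} does require), the hopcount is not automatically bounded by $I(p;R)$, so without the truncation your $\Sigma_1, \Sigma_2$ are not covered by the paper's estimates; you would need a refined renewal bound of the type $\sum_\ell \ell\,\lambda^\ell \P(S_\ell \le a) = O(a\, e^{\alpha a})$, which is not part of Lemma~\ref{lem-renewal-function}. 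The paper circumvents this by first replacing $\mathcal{E}_n(R)$ with $Y_n \coloneqq \sum_{H(p)\le 2\gamma\ln n} I(p;R)$ (and $\lambda_n(R)$ with $\lambda'_n = \E[Y_n\mid \mathcal{S}_{\le r_n}(1,n)]$), showing the tail $\mathcal{E}_n(R) - Y_n$ has expectation $o(1)$ by Lemma~\ref{lem-set-A} applied to the strip $\R^{-\infty} \times ((\gamma/\sqrt{\beta})\sqrt{\ln n}, +\infty]$, and only then applying Chen--Stein. You should insert this reduction.

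Two smaller points. Your worry about a ``missing diagonal contribution'' is unfounded: $\Sigma_{\eqref{correlation-1}}$ is already defined with the inner sum over $\bar{\mathcal{N}}_p = \mathcal{N}_p \cup \{p\}$, so the term $\sum_p \E_{\mathcal F}[I(p;R)]^2$ is included in Lemma~\ref{lem-correlation-1}, and the counting bound $|\bar{\mathcal{N}}_p \cap \mathcal{P}_{u',v'}(\ell')| \le 2\ell\ell' n^{\ell'-2}$ covers the case $p'=p$. Your handling of $k=0$ via the identity $\P_{\mathcal F}(\mathsf{G}_n,\mathcal{E}_n(R)=0) = \P_{\mathcal F}(\ind{\mathsf{G}_n}\mathcal{E}_n(R)=0) - \P_{\mathcal F}(\mathsf{G}_n^c)$ is correct and essentially what the paper does implicitly.
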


This lemma provides the final piece needed to establish the convergence of the extremal process.


\begin{proof}[Proof of Theorem \ref{thm1} admitting Lemma \ref{lem-avoidance-con}]
Combining Lemmas \ref{lem-cond-set}, \ref{lem-avoidance-con} and \ref{lem-good-event}, we have that $\mathcal{E}_n(R)$ converges in law to $\mathcal{E}_\infty(R)$ for every $R\in\mathcal{R}$. Lemma~\ref{lem:kallenberg} then implies that $\mathcal{E}_n$ converges in law to $\mathcal{E}_\infty$. For the second statement, the convergence just proven directly implies that
\[
\lim_{n\to\infty} \P(\mathcal{E}_n = 0) \le \P(\mathcal{E}_\infty = 0) = \P(W\tilde W = 0).
\]
It therefore remains to show the reverse inequality. By Lemma~\ref{lem:W equals 0}, we have $\P(W\tilde W = 0) = \P(|\mathcal{T}|<\infty,\ |\tilde{\mathcal{T}}|<\infty)$, where $\mathcal T$ and $\tilde{\mathcal T}$ are independent Poi($\lambda$)-BGW trees. But by the local limit result (Lemma~\ref{lem-local-weak-convergence}), we have that 
\[
\P(|\mathcal{T}|<\infty,\ |\tilde{\mathcal{T}}|<\infty) \le \lim_{n\to\infty} \P(\mathcal{P}(1,n) = \emptyset) = \lim_{n\to\infty}\P(\mathcal{E}_n = 0).
\]
This proves the second statement.
\end{proof}

We now turn to the proof of Lemma \ref{lem-avoidance-con}. Our strategy is to apply Lemma \ref{Stein_method} with 
\begin{align*}
  Y&= Y_n \coloneqq \sum_{\substack{p \in \mathcal{P}(1,n)\\H(p) \le 2\gamma\ln n}} I(p;R) \ , \ \chi= \ind{\mathsf{G}_{n}} \ \text{ and } \ \mathcal{F}= \mathcal{S}_{\le r_{n}}(1,n) .
\end{align*}
Note that we have 
\[
\mathcal{E}_n(R) = Y_n + \mathcal{E}_n(R\cap (\R^{-\infty} \times ((\gamma/\sqrt\beta)\sqrt{\ln n},+\infty])),
\]
and the latter term vanishes in $L^1$ as $n\to\infty$, by Lemma~\ref{lem-set-A}. Hence, setting $\lambda'_{n} \coloneqq \E[Y_n\,|\,\mathcal S_{\le r_{n}}]$, it suffices to prove that
  \begin{equation}
 \lim_{n \to \infty} \left| \P\left[  \mathsf{G}_{n} ,  Y_n = k   \mid \mathcal{S}_{\le r_{n}}(1,n) \right] -  \pi_{\lambda'_{n} }(k)    \right| = 0 \text{ in probability.}
  \end{equation}

Since we will work on the good event $ \mathsf{G}^{1}_n$ which is $ \mathcal{S}_{\le r_{n}}(1,n) $-measurable, we may rewrite $Y_n$ as follows:
 \begin{equation}
  Y_n = \sum_{\substack{p \in \mathcal{P}(1,n)\\H(p) \le 2\gamma\ln n}} I(p;R)  \quad \text{ on } \mathsf{G}^{1}_n,
 \end{equation} 
where  $\mathcal{P}(1,n )$ is defined in \eqref{def-B-1-n}.
Moreover for each $p \in \mathcal{P}(1,n ) $,  let us define the (conditioned) dissociating neighborhood of $p$  as 
\begin{align}
  \mathcal{N}_{p} \coloneq  \Big\{  p' \in \mathcal{P}(1,n) : p' \neq p, \  &  H(p') \le 2 \gamma \ln n,   p' \text{ and } p \text{ share at least one   }  \\
  & \text{ (undirected)  edge that is not contained in }  \mathcal{S}_{\le r_{n}} (1,n) \Big\}.
\end{align}
Explicitly, if $p=  [1,v_0] \oplus (v_{i})_{i=1}^{\ell-1} \oplus [v_{\ell},n]$; and $p' = [1,v'_0] \oplus (v'_{i})_{i=1}^{\ell'-1} \oplus [v'_{\ell'},n]$, then $ p' \in \mathcal{N}_{p}$ if and only if there exists $1 \le i \le \ell$ and $1 \le i' \le \ell'$ such that 
\begin{equation}\label{eq-dissociating-nbhd}
\text{ either }  ( v_{i-1} , v_{i} ) = ( v'_{i'-1} , v'_{i'} ) \text{ or } ( v_{i} , v_{i-1} ) = ( v'_{i'-1} , v'_{i'} )  . 
\end{equation} 

From the construction of $\mathcal{G}_{n}$ it follows that conditionally on $\mathcal{S}_{\le r_{n}}(1,n)$, $I(p;R)$ is   independent of $I(p';R), p' \notin \{p\} \cup \mathcal{N}_{p} \eqqcolon \overline{\mathcal{N}}_p$. By employing Lemma \ref{Stein_method}, we obtain that, for any $R \in\mathcal R$, on the event $\mathsf{G}^{1}_n$, for any $k\in \N$,
\begin{align}
 & \left|   \P\left( \mathsf{G}_{n}, \mathcal{E}_{n}(R) = k   \mid \mathcal{S}_{\le r_{n}}(1,n) \right) -  \pi_{\lambda_{n} (R)}(k)  \right| \\
 &\le  \underbrace{\sum_{\substack{p \in \mathcal{P}(1,n)\\H(p) \le 2\gamma\ln n}} \sum_{p' \in \overline{\mathcal{N}}_p} \E \left[ I(p;R) \mid  \mathcal{S}_{\le r_{n}} (1,n) \right]\E \left[ I(p';R) \mid  \mathcal{S}_{\le r_{n}} (1,n) \right] }_{\Sigma_{\eqref{correlation-1}}(n,r_{n};R)}\label{correlation-1} \\
 &\quad + \underbrace{\sum_{\substack{p \in \mathcal{P}(1,n)\\H(p) \le 2\gamma\ln n}} \sum_{p' \in     \mathcal{N}_p} \E \left[ I(p;R)   I(p';R) \ind{\mathsf{G}_{n}} \mid  \mathcal{S}_{\le r_{n}} (1,n) \right]}_{\Sigma_{\eqref{correlation-2}}(n,r_{n};R)} + 2\P[ \mathsf{G}_{n}^{c} \mid  \mathcal{S}_{\le r_{n}} (1,n) ]. \label{correlation-2}
\end{align}
   Thus it suffices to show that $ \Sigma_{\eqref{correlation-1}} (n,r_{n}; R )$ and $  \Sigma_{\eqref{correlation-2}} (n,r_{n}; R ) $ converges to zero in probability as $n \to \infty$.

\subsection{Vanishing of correlation terms} 
\label{sec-covar}

\begin{lemma}\label{lem-correlation-1}
Let $R\in\mathcal{R}$. Then  we have 
   \begin{equation}
    \lim_{n \to \infty} \Sigma_{\eqref{correlation-1}} (n,r_{n}; R ) \ind{\mathsf{G}_{n}} = 0 \text{ in probability.}
   \end{equation}
\end{lemma}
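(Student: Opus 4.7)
The plan is to decompose $\overline{\mathcal N}_p = \{p\} \cup \mathcal N_p$ and separately handle the diagonal term $\Sigma_{\mathrm d} := \sum_{p} \E_{\mathcal F}[I(p;R)]^2$ (corresponding to $p' = p$) and the off-diagonal term $\Sigma_{\mathrm{od}} := \sum_{p}\sum_{p' \in \mathcal N_p} \E_{\mathcal F}[I(p;R)]\E_{\mathcal F}[I(p';R)]$, showing each vanishes in probability on $\mathsf G_n^1$ (which contains $\mathsf G_n$).

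For the diagonal, formula \eqref{eq-exp-I-p-R} together with the fact that the middle length $\ell$ satisfies $\ell \ge 2$ on $\mathsf G_n^1$ gives $\E_{\mathcal F}[I(p;R)] \le (\lambda/n)^\ell \le (\lambda/n)^2$ for $n \ge \lambda$. Hence $\Sigma_{\mathrm d} \le (\lambda/n)^2 \lambda_n(R)$, and Lemma~\ref{lem-cond-set} combined with Corollary~\ref{lem-W-r_{n}-W} yields $\lambda_n(R)\mathbf 1_{\mathsf G_n^1} = O_P(1)$, so $\Sigma_{\mathrm d}\mathbf 1_{\mathsf G_n^1} = O_P(n^{-2}) \to 0$.

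For the off-diagonal term, every $p' \in \mathcal N_p$ shares at least one edge $e$ with $p$ outside $\mathcal S_{\le r_n}(1,n)$, whence $\Sigma_{\mathrm{od}} \le \sum_e A_e^2$ with $A_e := \sum_{p \ni e,\, H(p)\le 2\gamma\ln n}\E_{\mathcal F}[I(p;R)]$ and $e$ ranging over the at most $\binom n 2$ potential edges of $\mathcal G_n$. For fixed $e = (a,b)$, I decompose paths $p \in \mathcal P_{u,v}(\ell)$ containing $e$ according to the position $j \in \{0,\ldots,\ell-1\}$ at which $\{v_j, v_{j+1}\} = \{a,b\}$ sits in the middle part. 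An \emph{interior} position $1\le j\le \ell-2$ requires $a, b \notin \mathcal S_{\le r_n}$ and leaves at most $n^{\ell-3}$ choices for the remaining intermediate vertices, whereas a \emph{boundary} position $j \in \{0,\ell-1\}$ forces one endpoint of $e$ to lie in $\mathcal S_{r_n}(1) \cup \mathcal S_{r_n}(n)$ and leaves at most $n^{\ell-2}$ choices. Multiplying by $(\lambda/n)^\ell \mathsf F(\ell,n,\mathsf X_{u,v})$, summing over $(u,v,\ell)$, and invoking $\sum_\ell \lambda^\ell \mathsf F(\ell,n,y) \le C n e^{\alpha x}e^{-\alpha y}$ from Lemma~\ref{lem-renewal-function}(i) together with the cutoff $\ell \le 2\gamma \ln n$, I obtain
\[
A_e \le \frac{C\ln n}{n^2}\, e^{\alpha x}\, W_{r_n}\tilde W_{r_n} + \frac{Ce^{\alpha x}}{n}\Bigl(e^{-\alpha X([1,a])}\tilde W_{r_n}\mathbf 1_{a\in\mathcal S_{r_n}(1)} + \text{three symmetric boundary terms}\Bigr).
\]
Squaring the ``interior'' term and summing over the $\le\binom n 2$ potential edges gives a contribution $O_P((\ln n)^2/n^2) \to 0$, using tightness of $W_{r_n}\tilde W_{r_n}$ by Corollary~\ref{lem-W-r_{n}-W}.

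The main obstacle is the boundary contribution. Squaring the first boundary term and summing over $a \in \mathcal S_{r_n}(1)$, $b \in [n]$ yields a bound of order $\tilde W_{r_n}^2\, Z_{r_n}/n$, where $Z_{r_n} := \sum_{u \in \mathcal S_{r_n}(1)} e^{-2\alpha X([1,u])}$. Unlike $W_{r_n}$, this $L^2$-type quantity is \emph{not} a tight martingale when $\lambda e^{\psi_X(2\alpha)} > 1$, so tightness is unavailable in general. However $\E[Z_{r_n}] = (\lambda e^{\psi_X(2\alpha)})^{r_n} = e^{O(r_n)} = e^{o(\sqrt{\ln n})} = n^{o(1)}$ by the assumption $r_n = o(\sqrt{\ln n})$ in \eqref{eq:r_n}, so Markov's inequality gives $Z_{r_n}/n \to 0$ in probability, and the boundary contribution vanishes. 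The three symmetric boundary terms (swapping $a \leftrightarrow b$ and vertex $1 \leftrightarrow n$) are handled identically, which completes the proof.
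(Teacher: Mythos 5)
Your approach is genuinely different from the paper's, and it is close to working, but it has one real gap. The paper proves the lemma directly by counting $|\overline{\mathcal N}_p \cap \mathcal P_{u',v'}(\ell')| \le 2\ell\ell'n^{\ell'-2}$ (see \eqref{eq-nbhd-counting-1}), which after summation factors into $\frac{(2\gamma\ln n)^2}{n}\bigl(\sum_{u,v}\sum_\ell \frac{\lambda^\ell}{n}\mathsf F(\ell,n,\mathsf X_{u,v})\bigr)^2 \lesssim \frac{(\ln n)^2}{n}e^{2\alpha x}W_{r_n}^2\tilde W_{r_n}^2$; only tightness of $W_{r_n},\tilde W_{r_n}$ is then needed. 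You instead split into diagonal and off-diagonal and bound the off-diagonal via $\Sigma_{\mathrm{od}}\le\sum_e A_e^2$, which is a valid Cauchy--Schwarz-style step but forces you to treat edges near the cluster boundary separately.

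The gap is in your treatment of the boundary contribution. You bound it by a multiple of $\tilde W_{r_n}^2\, Z_{r_n}/n$ with $Z_{r_n}=\sum_{u\in\mathcal S_{r_n}(1)} e^{-2\alpha X([1,u])}$, and then assert $\E[Z_{r_n}]=(\lambda e^{\psi_X(2\alpha)})^{r_n}=n^{o(1)}$ to conclude by Markov. This step implicitly assumes $2\alpha\in\mathcal D$, i.e.\ $\psi_X(2\alpha)<\infty$, which is \emph{not} among the paper's hypotheses (only $\alpha\in\operatorname{int}\mathcal D$ and the existence of some $\alpha'>\alpha$ in $\mathcal D$ are used; $2\alpha$ may well lie outside $\mathcal D$, in which case $\E[Z_{r_n}]=\infty$ and your Markov argument collapses). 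The fix is immediate and side-steps the expectation entirely: since the summands are nonnegative, $Z_{r_n}=\sum_u e^{-2\alpha X([1,u])}\le\bigl(\sum_u e^{-\alpha X([1,u])}\bigr)^2 = W_{r_n}^2$, which is tight by Corollary~\ref{lem-W-r_{n}-W}, so $Z_{r_n}/n\to 0$ in probability without any moment hypothesis. With that substitution your route goes through, though it remains noticeably heavier than the paper's direct counting.
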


\begin{proof}
  Since $I(p;R)$ is increasing in $R$, it is enough to consider $R$ of the form
  \[
  R = (-\infty,x]\times \R^{-\infty,+\infty},
  \]
  for some $x\in \R$. We assume henceforth that this is the case. Recall from \eqref{eq-exp-I-p-R} that on the good event $\mathsf{G}^{1}_n$ the conditional expectation for $I(p;R)$ is given by  
  \begin{equation}
    \E \left[ I(p;R) \mid  \mathcal{S}_{\le r_{n}} (1,n) \right] \ind{\mathsf{G}^{1}_n} =  \left(\frac{\lambda}{n} \right)^{\ell}  \mathsf{F}(  \ell , n ,   \mathsf{X}_{u,v}  )\ind{\mathsf{G}^{1}_n}, 
  \end{equation}
  where $\mathsf{X}_{u,v} $ and $\mathsf{F}$ are defined in \eqref{eq-def-X-F}, note that $\mathsf{F}$ depends on $x$. This yields that on  $\mathsf{G}^{1}_n$, $  \Sigma_{\eqref{correlation-1}} (n,r_{n}; R) $ is bounded from above by 
  \begin{align}
 \sum_{u, u' \in \mathcal{S}_{r_{n}}(1)} \sum_{v, v'\in \mathcal{S}_{r_{n}}(n)}  \sum_{\ell , \ell' = 2}^{2\gamma\ln n} \sum_{ p \in \mathcal{P}_{u,v}(\ell) } \sum_{ p' \in  \mathcal{P}_{u',v'}(\ell') \cap (\mathcal{N}_{p} \cup \{p\}) } \frac{\lambda^{\ell}}{n^{\ell}}  
 \frac{\lambda^{\ell'}}{n^{\ell'}}  \,  \mathsf{F}(   \ell , n,  \mathsf{X}_{u,v}  ) \mathsf{F}(  \ell' , n, \mathsf{X}_{u',v'} ) .
  \end{align} 
  Next,  given any $p \in \mathcal{P}_{u,v}(\ell)$, we claim that
 the number of $p' \in \mathcal{P}_{u',v'}(\ell')$ in $\overline{\mathcal{N}}_p$ can be bounded from above as follows:   
\begin{equation}\label{eq-nbhd-counting-1}
  | \overline{\mathcal{N}}_p \cap \mathcal{P}_{u',v'}(\ell') |  \le \ell \times \ell' \times   2 \times n^{\ell'-2} . 
\end{equation}
In fact,  according to  \eqref{eq-dissociating-nbhd},  constructing such a path $p'$ involves choosing an index $1 \le i \le \ell$ for $p$ ($\ell$ options), an index $1 \le i' \le \ell'$ for $p'$ ($\ell'$ options), 
the vertex $v'_{i'} \in \{v_{i-1}, v_{i}\}$ ($2$ options),   and the remaining  vertices $(v'_{j})_{j=1}^{\ell-1} \backslash \{v'_{i'-1},v'_{i'}\}$ of $p'$ 
(at most $n^{\ell'-2}$ options---note that one might think there are at most $n^{\ell'-3}$ options, but it might be the case that $i'=1$ so that $v'_0=u'=u=v_0$ has been fixed and similar for $i'=n$). 
  
 Now combining \eqref{eq-nbhd-counting-1} with the trivial bound $|\mathcal{P}_{u,v}(\ell)| \le n^{\ell-1}$, we obtain  
\begin{align}
 &\Sigma_{\eqref{correlation-1}} (n,r_{n}; R) \\ 
 &\le \sum_{u, u' \in \mathcal{S}_{r_{n}}(1)} \sum_{v, v'\in \mathcal{S}_{r_{n}}(n)}  \sum_{\ell , \ell' = 2}^{2\gamma\ln n} 
 n^{\ell-1} \times  2 \ell \ell' \, n^{\ell'-2} \times
  \frac{\lambda^{\ell}}{n^{\ell}}  
 \frac{\lambda^{\ell'}}{n^{\ell'}}  \, \mathsf{F}(   \ell , n,  \mathsf{X}_{u,v}  ) \mathsf{F}(  \ell' , n,  \mathsf{X}_{u',v'} )  \\
  & \le \frac{(2\gamma\ln n)^2}{n} \left[ \sum_{u\in \mathcal{S}_{r_{n}}(1), v \in \mathcal{S}_{r_{n}}(n)} \sum_{\ell \ge 2} \frac{\lambda^{\ell}}{n} \mathsf{F}(   \ell  , n, \mathsf{X}_{u,v} ) \right]^2 . 
\end{align}
By applying part (i) in Lemma \ref{lem-renewal-function}, we have 
\begin{equation}
 \sum_{\ell \ge 2}^{\infty} \frac{\lambda^{\ell}}{n}\mathsf{F}(  \ell , n ,   \mathsf{X}_{u,v}  )  \le \frac{C}{n} e^{\alpha [\frac{1}{\alpha} \ln n + x- \mathsf{X}_{u,v}]} =  C  e^{\alpha x} e^{- \alpha X([1,u])} e^{- \alpha X([v,n])} . 
\end{equation}
Consequently, on the event $\mathsf{G}_{n}^{1}$, we have 
\begin{equation}
\Sigma_{\eqref{correlation-1}} (n,r_{n}; R ) \le C \, \frac{(2\gamma\ln n)^2}{n} e^{\alpha x}  \, W_{r_{n}}^2 (\tilde{W}_{r_{n}})^2   \xrightarrow{n \to \infty} 0 
\end{equation} 
in probability. 
Above we have used the $W_{r_{n}} , \tilde{W}_{r_{n}}$ are tight (Corollary \ref{lem-W-r_{n}-W}).
Combining with Lemma \ref{lem-good-event}, this completes the proof. 
\end{proof}

\begin{lemma}\label{lem-correlation-2}
Let $r_{n} \ge 1$ and $R\in\mathcal R$. Then  we have   
   \begin{equation}
    \lim_{n \to \infty} \Sigma_{\eqref{correlation-2}} (n,r_{n}; R )\ind{\mathsf{G}_{n}} = 0 \text{ in probability.}
   \end{equation} 
\end{lemma}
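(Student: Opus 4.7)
The plan is to mirror the structure of Lemma~\ref{lem-correlation-1} but now carefully parametrize each pair $(p, p')$ by its overlap structure. For each $p'\in\mathcal N_p$, I would record its endpoints $u'\in\mathcal S_{r_n}(1)$, $v'\in\mathcal S_{r_n}(n)$, its middle hopcount $\ell'$, the number $j=j(p,p')\ge 1$ of edges shared between the middle parts $q$ of $p$ and $q'$ of $p'$, and the number $s=s(p,p')\ge 1$ of maximal contiguous shared subpaths (so that $q\cap q'$ is a disjoint union of $s$ subpaths of total length $j$). Since the $j$ shared edges are already revealed by $p$, the conditional edge-presence factor for $p'$ given $\mathcal F$ becomes $(\lambda/n)^{\ell'-j}$ rather than $(\lambda/n)^{\ell'}$, and the middle weights decompose as $X(q)=W_s+W_p$ and $X(q')=W_s+W_{p'}$ where $W_s,W_p,W_{p'}$ are independent sums of $j$, $\ell-j$ and $\ell'-j$ iid copies of $X$.

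The key idea is to exploit $\mathsf G_n^3$ to control the shared-weight contribution. On $\mathsf G_n^3$, every path in $\mathcal G_n$ has weight $\ge -\tfrac{1}{\alpha'}\ln n$ with $\alpha'>\alpha$ as in Lemma~\ref{lem-renewal-function}(i), so each of the $s$ shared subpaths has weight $\ge -\tfrac{1}{\alpha'}\ln n$ and consequently $W_s\ge -\tfrac{s}{\alpha'}\ln n=:-D$. Combining exponential Markov at parameter $\alpha$ for $W_p,W_{p'}$ with the tilted bound
\begin{equation*}
\E\bigl[e^{-2\alpha W_s}\mathbf 1_{\{W_s\ge -D\}}\bigr]=e^{j\psi_X(\alpha)}\E\bigl[e^{-\alpha\hat S_j}\mathbf 1_{\{\hat S_j\ge -D\}}\bigr]\le \lambda^{-j}e^{\alpha D},
\end{equation*}
obtained via the change of measure to the tilted walk $\hat S$ used in Lemma~\ref{lem-renewal-function}, I obtain
\begin{equation*}
\E\bigl[I(p;R)I(p';R)\mathbf 1_{\mathsf G_n^3}\,\bigm|\,\mathcal F\bigr]\mathbf 1_{\mathsf G_n^1}\le n^{2+\alpha s/\alpha'-(\ell+\ell'-j)}\,e^{2\alpha x-\alpha(\mathsf X_{u,v}+\mathsf X_{u',v'})}\mathbf 1_{\mathsf G_n^1}.
\end{equation*}
A direct combinatorial count bounds the number of $p'$ with given parameters $(u',v',\ell',j,s)$ by $(C\ln n)^{2s}\cdot n^{\ell'-j-s-1}$: choose the positions, orientations and lengths of the $s$ shared segments in both $p$ and $p'$ (each contributing at most $\ell,\ell'\le 2\gamma\ln n$ options), then pick the remaining $\ell'-j-s-1$ free intermediate vertices in $[n]\setminus\mathcal S_{\le r_n}(1,n)$.

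Combining the conditional bound with $|\mathcal P_{u,v}(\ell)|\le n^{\ell-1}$, the exponents of $n$ collapse to $-s(1-\alpha/\alpha')<0$; summing over $\ell,\ell'\le 2\gamma\ln n$ and $j$ contributes only polylogarithmic factors, the sum over $s\ge 1$ is geometric and dominated by its first term, and the sum over $u,u',v,v'$ of $e^{-\alpha \mathsf X_{u,v}}e^{-\alpha\mathsf X_{u',v'}}$ produces the factor $W_{r_n}^2\tilde W_{r_n}^2$. All told,
\begin{equation*}
\Sigma_{\eqref{correlation-2}}(n,r_n;R)\mathbf 1_{\mathsf G_n}\le C(\ln n)^{O(1)}\,n^{-(1-\alpha/\alpha')}\,e^{2\alpha x}\,W_{r_n}^2\tilde W_{r_n}^2,
\end{equation*}
which tends to $0$ in probability by the tightness of $(W_{r_n},\tilde W_{r_n})$ from Corollary~\ref{lem-W-r_{n}-W}.

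The main obstacle is identifying the correct way to exploit $\mathsf G_n^3$. A naive bound $\E[e^{-2\alpha W_s}\mathbf 1_{\{W_s\ge -D\}}]\le e^{2\alpha D}$ would produce a factor $n^{2\alpha s/\alpha'}$ and force the stronger condition $\alpha'>2\alpha$, which is not guaranteed by our assumptions. The crucial point is first to perform the exponential tilt at parameter $\alpha$ (not $2\alpha$), which converts the problematic factor $e^{\psi_X(2\alpha)j}=(\rho/\lambda)^j$ (where $\rho=\lambda e^{\psi_X(2\alpha)}$ could be $\ge 1$) into a harmless $\lambda^{-j}$, and only then invoke the deterministic lower bound from $\mathsf G_n^3$ to estimate $e^{-\alpha\hat S_j}\le e^{\alpha D}$. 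In the easier case $\rho<1$ (which includes $X\ge 0$, since $\psi_X$ is then decreasing), one could alternatively apply Chernoff directly to $X(q)+X(q')$ without invoking $\mathsf G_n$, yielding a geometric sum in $j$; but the above approach based on $\mathsf G_n^3$ handles both cases uniformly.
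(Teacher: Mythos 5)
Your parametrization of $p'$ by its overlap data $(u',v',\ell',j,s)$, the use of $\mathsf G_n^3$ to cap the shared weight from below, and the change of measure to the tilted walk $\widehat S$ before invoking the deterministic lower bound, is exactly right for the ``generic'' pairs in which $p'$ does not leave $\mathcal S_{\le r_n}(1,n)$ at the same vertex as $p$. This corresponds to what the paper calls Case~1, where the shared segments are all interior to the middle parts of both paths, and there your power counting $-s(1-\alpha/\alpha')<0$ is correct.

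However, there is a genuine gap: your combinatorial bound $(C\ln n)^{2s}\,n^{\ell'-j-s-1}$ on the number of $p'$ with given overlap data is \emph{not} an upper bound when $u'=u$ or $v'=v$. If $u'=u$, the first shared segment is forced to start at the fixed endpoint $u$ (so you lose the $\ell\ell'$ freedom in placing it), but one of the vertices you subtract when counting free intermediate positions is $v'_0=u'$ itself, which was never free. The correct count in that sub-case is $(C\ln n)^{2(s-1)}\,n^{\ell'-j-s}$, which is \emph{larger} than your bound by a factor of order $n/(\ln n)^2$; symmetrically for $v'=v$, and by a factor of order $n^2/(\ln n)^4$ when both hold. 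Plugging the corrected count into your estimate shows the $n$-power in the $u'=u$, $s=1$ sub-case becomes $n^{\alpha/\alpha'}\to\infty$, so the argument breaks down.

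Moreover, even with the corrected combinatorics, $\mathsf G_n^3$ alone is insufficient in these boundary sub-cases. When $u'=u$ the weight factor accumulated by summing over $u$ is $\sum_{u\in\mathcal S_{r_n}(1)} e^{-2\alpha X([1,u])}$, which under $\mathsf G_n^3$ is only bounded by $n^{\alpha/\alpha'}W_{r_n}$, not enough to cancel the extra $n$. The paper resolves this by splitting $\mathcal N_p$ into four sub-families according to whether $u'=u$ and $v'=v$, and for the boundary sub-families it invokes the \emph{second} good event $\mathsf G_n^2$: the first shared segment, concatenated with $[1,u]$, forms a path of length $\ge r_n$ emanating from vertex $1$, and $\mathsf G_n^2$ forces its weight to grow linearly, $\ge\frac{s^*}{2}(r_n+\ell_1)$. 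Tilting at $\alpha$ (rather than $2\alpha$) against this lower bound turns $e^{-2\alpha X([1,u])}$ into $e^{-\alpha X([1,u])}\cdot e^{-\alpha s^* r_n/2}$, recovering a summable $W_{r_n}$ together with a decay $e^{-cr_n}\to 0$. Your sketch never mentions $\mathsf G_n^2$; without it (and without the case split), the proof does not close.
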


 \begin{figure}[htbp]
 \includegraphics[width=\textwidth]{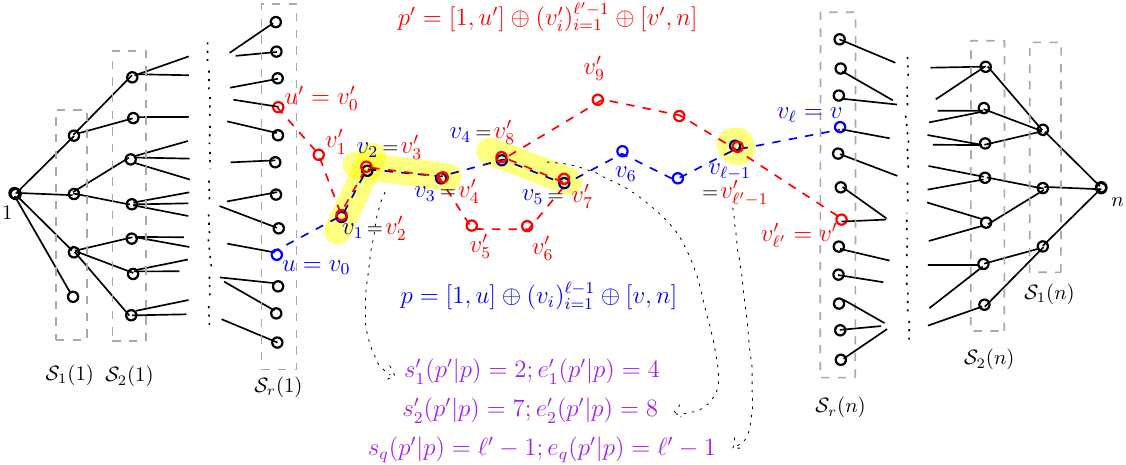}
  \caption{Illustration of $s_{i}'(p'|p)$ and $e'_{i}(p'|p)$.}
\end{figure}

\begin{proof}
As in the proof of Lemma~\ref{lem-correlation-1}, we can assume that $R$ is of the form
  \[
  R = (-\infty,x]\times \R^{-\infty,+\infty},
  \]
  for some $x\in \R$.
  
  Fix any $p  \in \mathcal{P}_{u,v}(\ell)$ with $u \in \mathcal{S}_{r_{n}}(1)$, $  v \in \mathcal{S}_{r_{n}}(n)$ and $2 \le \ell \le 2\gamma\ln n$. 
For each  path $p' \in \mathcal{P}_{u',v'}(\ell')$   that belongs to $\mathcal{N}_{p}$, and $i \ge 1$, we define $ s'_{i}(p'|p) $, $ e'_{i}(p'|p) $ as the start and end indices of the $i$-th ``journey'' in which it shares its trajectory with $p$.  Precisely, let $p=(v_{i})_{i=0}^{\ell}$ and $p'=(v'_{i})_{i=0}^{\ell'}$ be the corresponding representation of $p$, $p'$ \eqref{eq-def-b-u-v-l}.
We define, for $i \ge 1$,
\begin{align}
  s'_{i}(p'|p) & \coloneq  \inf\left\{  j \ge e'_{i-1} : v'_{j} \in \{ v_{i}: 0 \le i \le \ell \} \right\} , \\
   e'_{i}(p'|p) & \coloneq  \sup \left\{  j \ge  s'_{i} : \text{ the pattern  } (v'_{k})_{k=s'_{1}}^{j} \text{ appears in  }  (v_{i})_{i=0}^{\ell}  \text{ or in }  (v_{\ell-i})_{i=0}^{\ell}  \right\} , 
\end{align} 
with the convention  $e'_{0}(p'|p) \coloneq -1$. Denote by $  \ell_{i}(p'|p) $  the number of shared segments during the $i$-th  journey, that is 
\begin{equation}
  \ell_{i}(p'|p) \coloneq  e'_{i}(p'|p)-s'_{i}(p'|p) \ , \ \text{ for } i \ge 1 .  
\end{equation}  
Define $  Q(p'|p)$  as    the total number of  journeys that $p'$  moves along with (or opposite to) $p$: 
\begin{equation}
  Q(p'|p)  \coloneq   \sup\left\{ q: e'_{q} \le \ell' \right\} . 
\end{equation}
Finally, set $ \bm{\ell} (p'|p)\coloneq  \left(  \ell_{i}(p'|p) \right)_{i=1}^{Q(p'|p)} $ and note that 
\begin{equation} 
   |\bm{\ell} (p'|p)| \coloneq   \|\bm{\ell} (p'|p)\|_{1}= \sum_{i=1}^{ Q(p'|p)} \ell_{i}(p'|p) \geq 1 ,  
\end{equation} 
because $p' \in \mathcal{N}_{p}$ shares at least one (undirected) edge with $p$.

Let $(S^{(i)}_{\bullet})_{i \ge 1}$ be i.i.d. copies of the random walk $S_{\bullet}$.
Using this notation, we have 
\begin{equation}\label{eq-correlation-prob}
  \begin{aligned}
  & \E \left[ I(p;R) I(p';R) \ind{\mathsf{G}_{n}} \mid  \mathcal{S}_{\le r_{n}} (1,n) \right]   \\
   & \leq  \left( \frac{\lambda}{n} \right)^{\ell+ \ell'-   |\bm{\ell} (p'|p)|  } \times
   \E \bigg[  \mathsf{F} \Big( n, \ell- |   \bm{\ell} (p'|p)|, \sum_{i=1}^{Q(p'|p)} S^{(i)}_{\ell_{i} (p'|p)} + \mathsf{X}_{u,v}   \Big)  \\
   & \qquad  \times  \mathsf{F}\Big( n, \ell'- |   \bm{\ell} (p'|p)|,    \sum_{i=1}^{Q(p'|p)} S^{(i)}_{\ell_{i} (p'|p)} + \mathsf{X}_{u',v'}   \Big) \ind{S^{(i)}_{\ell_{i}(p'|p)} \ge - \frac{1}{\alpha'} \ln n , \forall \,  1 \le i \le Q(p'|p)  }  \bigg] . 
  \end{aligned}
\end{equation}
Above,    $S^{(i)}_{\ell_{i}}$ comes from the weight of the $i$-th journey that $p'$ follows the trajectory of $p$; and  
we have an inequality not an equality as many constraints in $\mathsf{G}_{n}$ were omitted. 

The  expectation of $I(p;R) I(p';R)$ above suggests a finer classification of $\mathcal{N}_{p}$. To this end, let  $\mathcal{L} \coloneq \left\{ \bm{\ell} =(\ell_{i})_{i=1}^{q}: q \ge 1; \ell_{i} \ge 0,  |\bm{\ell}  |  \coloneq \sum_{i=1}^{q} \ell_{i} \ge 1 \right\}$. 
For  $\bm{\ell}=(\ell_{i})_{i=1}^{q} \in \mathcal{L}$,  define 
\begin{equation}
  \mathcal{P}_{u',v'}(\ell', \bm{\ell} \mid p ) : = \{ p' \in \mathcal{P}_{u',v'}(\ell') \cap \mathcal{N}_{p}: Q(p'|p)=q; \text{ and } \ell_{i}(p'|p)= \ell_{i}, 1 \le i \le q  \} . 
\end{equation}  
This allows us to express $\mathcal{N}_{p}$ as the following disjoint union: 
\begin{equation}\label{eq-union-4}
\mathcal{N}_{p} =  \bigcup_{u' \in \mathcal{S}_{r_{n}}(1), v' \in \mathcal{S}_{r_{n}}(n)} \, \bigcup_{\ell' \ge 2}^{2 \gamma \ln n} \, 
\bigcup_{ \bm{\ell} \in \mathcal{L}}  \, \mathcal{P}_{u',v'}(\ell', \bm{\ell} \mid p )   . 
\end{equation} 
Assuming $p \in \mathcal{P}_{u,v}(\ell)$, we partition $\mathcal{N}_{p}$ into four disjoint subsets: $\mathcal{N}^{\times , \times}_{p}$, $\mathcal{N}^{\checkmark , \times}_{p}$, $\mathcal{N}^{\times,\checkmark}_{p}$, and $\mathcal{N}^{\checkmark , \checkmark}_{p}$. These subsets are the sub-unions of~\eqref{eq-union-4} defined by conditions on the indices $u'$ and $v'$. The first superscript indicates if $u' = u$ ($\checkmark$) or $u' \neq u$ ($\times$), and the second does so analogously for $v'$ and $v$. 
 
\medskip

\underline{\textit{Case 1.}} According to \eqref{eq-union-4} and the definition of $\mathcal{N}^{\times,\times}_{p}$,  we have 
\begin{align}
   \Sigma_{\eqref{eq-sum-FF}} & \coloneq   \sum_{\substack{p \in \mathcal{P}(1,n)\\H(p) \le 2\gamma\ln n}} \sum_{p' \in     \mathcal{N}_p^{\times,\times} } \E \left[ I(p;R)   I(p';R) \ind{\mathsf{G}_{n}} \mid  \mathcal{S}_{\le r_{n}} (1,n) \right]  \\
   &=  \sum_{\substack{u   \in \mathcal{S}_{r_{n}}(1) \\ v  \in \mathcal{S}_{r_{n}}(n) }}  \, \sum_{\ell =2}^{2 \gamma \ln n}  
   \sum_{p \in \mathcal{P}_{u,v}(\ell)} 
   \sum_{\substack{u \neq u'   \in \mathcal{S}_{r_{n}}(1) \\ v \neq v'  \in \mathcal{S}_{r_{n}}(n) }} 
   \sum_{\ell' =2}^{2 \gamma \ln n}   \,  \, \sum_{q \ge 1}\, 
  \sum_{\substack{\bm{\ell}=(\ell_{i})_{i=1}^{q} \\ |\bm{\ell}| \le \ell \wedge \ell'}} \sum_{p' \in \mathcal{P}_{u',v'} (\ell',  \bm{\ell} \mid  p ) }
  \\
  & \qquad  \qquad  \qquad  \qquad  \E \left[ I(p;R)   I(p';R) \ind{\mathsf{G}_{n}} \mid  \mathcal{S}_{\le r_{n}} (1,n) \right]   \label{eq-sum-FF} .
\end{align}
Given any $p\in \mathcal{P}_{u,v}(\ell)$, 
we claim that for any $u' \neq u$,  $v' \neq v$, $\ell' \ge 2$ and  $\bm{\ell}=(\ell_{i})_{i=1}^{q} \in \mathcal{L}$, 
 \begin{equation}\label{enumeration-1}
  |  \mathcal{P}_{u',v'} (\ell',  \bm{\ell} \mid  p ) |  \le (\ell'    \times \ell  \times   2 )^{q} 
  \times n^{\ell'-1- \sum_{i=1}^{q}(\ell_{i}+1)} . 
\end{equation}
Moreover $  |  \mathcal{P}_{u',v'} (\ell',  \bm{\ell} \mid  p ) |=0$ if $|\bm{\ell}| > \ell$.
In fact, to construct a path $p' \in   \mathcal{P}_{u',v'} (\ell',  \bm{\ell} \mid  p ) $, 
we first select, for each $1 \le  i \leq q$, the indices $1 \leq s_{i}'= s'_{i}(p'|p) \le \ell'$   (at most $\ell'$ options), 
and then choose the corresponding vertex $v'_{s'_{i}}$ in $\{ v_{i}\}_{i=0}^{\ell}$ (at most $\ell$ options). If   $\ell_{i} \ge 1$; we may then determine $(v'_{s'_{i}+1}, ..., v'_{s'_{i}+\ell_{i}})$ to follow either the same direction as  $p$ or the opposite one (at most two choices).  Finally, we choose the remaining vertices in  $(v'_{j})_{j=1}^{\ell'-1} \backslash ( \cup_{i=1}^{q}\cup_{s'_{i} \le j \le s'_{i}+\ell_{i}}\{ v'_{j}   \}) $, which gives at most $n^{\ell'-1- \sum_{i=1}^{q}(\ell_{i}+1)}$ possibilities, since $s'_{1} \ge 1$ and $e'_{q} \le \ell'-1$.

 Now, combining \eqref{eq-correlation-prob} and \eqref{eq-sum-FF}, together with the bounds $|\mathcal{P}_{u,v}(\ell)| \le n^{\ell-1}$ and   \eqref{enumeration-1}, we obtain
\begin{align}
    &\Sigma_{\eqref{eq-sum-FF}} \le \sum_{\substack{u, u' \in \mathcal{S}_{r_{n}}(1) \\ v,v' \in \mathcal{S}_{r_{n}}(n) }} \, \sum_{2 \le \ell, \ell' \le 2\gamma\ln n} \,  \, \sum_{q \ge 1}\, 
  \sum_{\substack{\bm{\ell}=(\ell_{i})_{i=1}^{q} \\ |\bm{\ell}| \le \ell \wedge \ell'}}
  \left(\frac{\lambda}{n}\right)^{\ell+\ell'- |\bm{\ell}| } n^{\ell-1}  2^q (2\gamma\ln n)^{2q} 
  \times n^{\ell'-1- |\bm{\ell}|-q}   \\ 
 & \quad \times  \E_{X} \bigg[  \mathsf{F} \Big(  \ell- |   \bm{\ell} |, n, \sum_{i=1}^{q} S^{(i)}_{\ell_{i} } + \mathsf{X}_{u,v}   \Big)   
  \mathsf{F}\Big(  \ell- |   \bm{\ell} |,  n,  \sum_{i=1}^{q} S^{(i)}_{\ell_{i} } + \mathsf{X}_{u',v'}   \Big) \ind{S^{(i)}_{\ell_{i}} \ge - \frac{1}{\alpha'} \ln n , \forall \,  1 \le i \le q  }  \bigg]  . 
\end{align} 
Notice that the part (i) of  Lemma \ref{lem-renewal-function} yields
\begin{equation}
  \sum_{\ell \ge |\bm{\ell}|} \frac{\lambda^{\ell-|\bm{\ell}|}}{n} \mathsf{F}\Big( \ell- |   \bm{\ell} |, n,   \sum_{i=1}^{q} S^{(i)}_{\ell_{i} } + \mathsf{X}_{u,v}   \Big) \le C e^{\alpha x} \prod_{i=1}^{q} e^{-\alpha S^{(i)}_{\ell_{i} }} e^{-\alpha \mathsf{X}_{u,v} } . 
\end{equation}
Substituting this into the previous bound for  $\Sigma_{\eqref{eq-sum-FF}}$, we get 
\begin{align}
 \Sigma_{\eqref{eq-sum-FF}}  
  &\le \sum_{\substack{u, u' \in \mathcal{S}_{r_{n}}(1) \\ v,v' \in \mathcal{S}_{r_{n}}(n) }} \,   \,  \, \sum_{q \ge 1}\, 
  \sum_{ \bm{\ell}=(\ell_{i})_{i=1}^{q}  }
 \lambda^{|\bm{\ell}|}  \left( \frac{4\gamma\ln n }{n}  \right)^{q} \\
 & \qquad \quad  \times  C^{2} \E_{X} \bigg[   e^{2\alpha x} \prod_{i=1}^{q} e^{- 2 \alpha S^{(i)}_{\ell_{i} }} e^{-\alpha \mathsf{X}_{u,v} } e^{-\alpha \mathsf{X}_{u',v'} } \ind{S^{(i)}_{\ell_{i}} \ge - \frac{1}{\alpha'} \ln n , \forall \,  1 \le i \le q  }  \bigg]   \\
 &=C^{2} e^{2 \alpha x }  \sum_{\substack{u, u' \in \mathcal{S}_{r_{n}}(1) \\ v,v' \in \mathcal{S}_{r_{n}}(n) }}  e^{-\alpha \mathsf{X}_{u,v} } e^{-\alpha \mathsf{X}_{u',v'} } \sum_{q \ge 1} \bigg( \frac{4\gamma\ln n }{n}  \sum_{j \ge 1} \lambda^{j} \E \Big[ e^{-2 \alpha S_{j} } \ind{S_{j} \ge - \frac{1}{\alpha'} \ln n }\Big] \bigg)^{q} . 
\end{align}
Using Lemma \ref{lem-renewal-function}, it follows that,
\begin{align}
 &  \sum_{j \ge 1} \lambda^{j}  \E \Big[ e^{-2 \alpha S_{j} } \ind{S_{j} \ge - \frac{1}{\alpha'} \ln n }\Big]   \lesssim \sum_{m \ge -   \frac{1}{\alpha'} \ln n   }    e^{-2 \alpha m}    \sum_{j \ge 1} \lambda^{j} \P \Big( S_{j} \in [m-1,m]\Big) 
  \\
  & \quad \le     \sum_{m \ge   - \frac{1}{\alpha'} \ln n   }    e^{-2 \alpha m} V(  m)  \lesssim \sum_{m \ge   - \frac{1}{\alpha'} \ln n   }    e^{-(2 \alpha-\alpha') m} \lesssim n^{\frac{(2\alpha-\alpha')}{\alpha'}}.
\end{align}
Note that $\frac{(2\alpha-\alpha')}{\alpha'} < 1$, since $\alpha' > \alpha$.
Therefore, we can find a constant $c>0$ such that 
\begin{equation}
   \sum_{q \ge 1} \bigg[ \frac{4\gamma\ln n }{n}  \sum_{j \ge 1} \lambda^{j} \E \Big[ e^{-2 \alpha S_{j} } \ind{S_{j \ge - \frac{1}{\alpha'} \ln n }}\Big] \bigg]^{q} \le n^{-c}
\end{equation}
for  sufficiently large $n$.  
Moreover, note that 
\begin{equation}
    \sum_{\substack{u, u' \in \mathcal{S}_{r_{n}}(1) \\ v,v' \in \mathcal{S}_{r_{n}}(n) }}  e^{-\alpha \mathsf{X}_{u,v} } e^{-\alpha \mathsf{X}_{u',v'} } = W_{r_{n}}^{2} (\tilde{W}_{r_{n}})^2
\end{equation}
We conclude that 
$ \Sigma_{\eqref{eq-sum-FF}}   \le n^{-c}  W_{r_{n}}^{2} (\tilde{W}_{r_{n}})^2 .$ Thus as $n \to \infty$, $ \Sigma_{\eqref{eq-sum-FF}} \ind{\mathsf{G}_{n}}$  converges to zero in probability. 

\medskip 

\underline{\textit{Case 2.}}  
Let us bound $|\mathcal{N}^{\checkmark,\times}_{p}|$, where $p\in \mathcal{P}_{u,v}(\ell)$.
(The case for $|\mathcal{N}^{\times, \checkmark}_{p}|$ is similar.) 
Observe that for $p' \in \mathcal{N}^{\checkmark,\times}_{p} $ by definition we have 
 $s'_{1}(p'|p)=0$ and $e'_{q}(p'|p) \le \ell'-1$.
 In this case we claim that  
 \begin{equation}\label{enumeration-2}
  |  \mathcal{P}_{u,v'}^{(p)}(\ell', q, (\ell_{i})_{i=1}^{q} ) |  \le (\ell'    \times \ell  \times   2 )^{q-1} 
  \times n^{\ell'-1- \sum_{i=1}^{q}(\ell_{i}+1) + 1}  
\end{equation}
Comparing with \eqref{enumeration-1}, the  difference here is that there is no freedom to choose  $(v'_{0}, v'_{1},\cdots, v'_{\ell_{1}})$, they must be exactly the same as $(v_{0},v_{1},\cdots,v_{\ell_{1}})$. 
Consequently,   we obtain 
\begin{align}
 &\Sigma_{\eqref{eq-sum-TF}} \coloneq    \sum_{\substack{p \in \mathcal{P}(1,n)\\H(p) \le 2\gamma\ln n}} \sum_{p' \in     \mathcal{N}_p^{\checkmark,\times} } \E \left[ I(p;R)   I(p';R) \ind{\mathsf{G}_{n}} \mid  \mathcal{S}_{\le r_{n}} (1,n) \right] \label{eq-sum-TF} \\
  &\le \sum_{\substack{u, u' \in \mathcal{S}_{r_{n}}(1) \\ v,v' \in \mathcal{S}_{r_{n}}(n) }} \,    \sum_{2 \le \ell, \ell' \le 2\gamma\ln n}   \, \sum_{q \ge 1}\, 
  \sum_{\substack{\bm{\ell}=(\ell_{i})_{i=1}^{q} \\ |\bm{\ell}| \le \ell \wedge \ell'}}
  \left(\frac{\lambda}{n}\right)^{\ell+\ell'- |\bm{\ell}| } n^{\ell-1}  2^{q-1} (2\gamma\ln n)^{2(q-1)} 
  \times n^{\ell'-1- |\bm{\ell}|-q + 1}   \\ 
 & \quad \times  \E_{X} \bigg[  \mathsf{F} \Big( \ell- |   \bm{\ell} |, n, \sum_{i=1}^{q} S^{(i)}_{\ell_{i} } + \mathsf{X}_{u,v}   \Big)    \mathsf{F}\Big( \ell- |   \bm{\ell} |,  n,  \sum_{i=1}^{q} S^{(i)}_{\ell_{i} } + \mathsf{X}_{u',v'}   \Big) \ind{  \mathrm{Con}_{q}^{\checkmark,\times} } \bigg]  , 
\end{align}
where 
\begin{equation}
  \mathrm{Con}_{q}^{\checkmark,\times} \coloneq  \left\{   S^{(1)}_{\ell_{1}} + X([1,u]) \ge \frac{ s^{*}}{2}(r_{n}+\ell_{1}) \right\}  \cap 
  \bigcap_{i=2}^{q} \left\{  S^{(i)}_{\ell_{i}} \ge - \frac{1}{\alpha'} \ln n  \right\} . 
\end{equation}
Following the same argument as in Case 1, we   bound $ \Sigma_{\eqref{eq-sum-TF}}$ from above  by 
\begin{align}
& \sum_{\substack{u, u' \in \mathcal{S}_{r_{n}}(1) \\ v,v' \in \mathcal{S}_{r_{n}}(n) }} \,  \sum_{q \ge 1}\, 
  \sum_{ \bm{\ell}=(\ell_{i})_{i=1}^{q}  }
 \lambda^{|\bm{\ell}|}  \left( \frac{4\gamma\ln n }{n}  \right)^{q-1} 
 C^{2} \E_{X} \bigg[   e^{2\alpha x} \prod_{i=1}^{q} e^{- 2 \alpha S^{(i)}_{\ell_{i} }} e^{-\alpha \mathsf{X}_{u,v} } e^{-\alpha \mathsf{X}_{u,v'} }  \ind{  \mathrm{Con}_{q}^{\checkmark,\times} } \bigg]\\
 & =C^{2} e^{2 \alpha x }  \sum_{\substack{u \in \mathcal{S}_{r_{n}}(1) \\ v,v' \in \mathcal{S}_{r_{n}}(n) }}  e^{-\alpha X([v,n]) -\alpha X(v',n) } \
 \sum_{j\ge 1} \lambda^{j} \E_{X} \left[  e^{-2\alpha [X([1,u]) + S_{j}]} \ind{  S^{(1)}_{\ell_{1}} + X([1,u]) \ge \frac{ s^{*}}{2}(r_{n}+\ell_{1})  } \right]  \\
 & \qquad  \qquad \times  \sum_{q \ge 1} 
 \bigg[ \frac{4\gamma\ln n }{n}  \sum_{j \ge 1} \lambda^{j} \E \Big[ e^{-2 \alpha S_{j} } \ind{S_{j} \ge - \frac{1}{\alpha'} \ln n }\Big] \bigg]^{q-1}.  
\end{align}
By using the fact $\lambda^{j} \E[e^{- \alpha S_{j}}]=1$, we have 
\begin{align}
 & \sum_{j \ge 1} \lambda^{j} \E_{X} \left[  e^{-2\alpha [X([1,u]) + S_{j}]} \ind{  S_{j} + X([1,u]) \ge \frac{ s^{*}}{2}(r_{n}+ j)  } \right] \\
&  \le  \sum_{j \ge 1} e^{-\alpha s^{*} (r_{n}+ j)} \lambda^{j} \E_{X} \left[  e^{-\alpha [X([1,u]) + S_{j}]}  \right] \lesssim  e^{- \alpha s^{*} r_{n}  } e^{ - \alpha X([1,u]) } .
\end{align}
 Simplifying the expression for the upper bound, we finally conclude 
\begin{equation}
  \Sigma_{\eqref{eq-sum-TF}} \lesssim  e^{2 \alpha x} e^{- \alpha s^{*} r_{n} } W_{r_{n}} (\tilde{W}_{r_{n}})^2 .
\end{equation}
Since  $(W_{r_{n}},\tilde{W}_r)$ is tight by Corollary \ref{lem-W-r_{n}-W}, it follows that as $n \to \infty$, $ \Sigma_{\eqref{eq-sum-TF}} \ind{\mathsf{G}_{n}}$  converges to zero in probability. Moreover the same argument yields
\begin{equation}
  \Sigma_{\eqref{eq-sum-FT}} \coloneq    \sum_{\substack{p \in \mathcal{P}(1,n)\\H(p) \le 2\gamma\ln n}} \sum_{p' \in     \mathcal{N}_p^{\times,\checkmark} } \E \left[ I(p;R)   I(p';R) \ind{\mathsf{G}_{n}} \mid  \mathcal{S}_{\le r_{n}} (1,n) \right] \xrightarrow[n \to \infty]{\text{in prob}} 0  \text{ on } \mathsf{G}_{n}  .\label{eq-sum-FT}
\end{equation}

\medskip 

\underline{\textit{Case 3.}}  
Let us bound $|\mathcal{N}^{\checkmark,\checkmark}_{p}|$.  Observe that for any $p' \in \mathcal{N}^{\checkmark,\checkmark}_{p}$, there must hold   
$Q(p'|p) \ge 2$, otherwise $p'=p$ which is absurd since $p \notin \mathcal{N}_{p}$. We claim that for $\bm{\ell}=(\ell_{i})_{i=1}^{q} \in \mathcal{L} $ with $q \ge 2$, 
 \begin{equation}\label{enumeration-3}
  |  \mathcal{P}_{u,v}(\ell', \bm{\ell} | p )  |  \le (\ell'    \times \ell  \times   2 )^{q-2} 
  \times n^{\ell'-1-\sum_{i=1}^{q}(\ell_{i}+1) + 2} 
\end{equation}
Comparing with \eqref{enumeration-1} and  \eqref{enumeration-2}, the  differences here are that now $s'_{1}(p'|p)=0$ and $e'_{q}(p'|p) = \ell'$; and 
there is no freedom to choose  $(v'_{0}, v'_{1},v'_{\ell_{1}})$, and   $(v'_{s'_{q}},..., v'_{\ell'})$: they must be exactly the same as $(v_{0},v_{1},\cdots,v_{\ell_{1}})$ and $(v_{\ell-\ell_{q}},...\cdots, v_{\ell})$  by definition. 
Consequently,   we obtain 
\begin{align}
 &\Sigma_{\eqref{eq-sum-TT}} \coloneq   \sum_{\substack{p \in \mathcal{P}(1,n)\\H(p) \le 2\gamma\ln n}} \sum_{p' \in     \mathcal{N}_p^{\checkmark,\checkmark} } \E \left[ I(p;R)   I(p';R) \ind{\mathsf{G}_{n}} \mid  \mathcal{S}_{\le r_{n}} (1,n) \right] \label{eq-sum-TT} \\ 
  &\le \sum_{\substack{u \in \mathcal{S}_{r_{n}}(1) \\ v \in \mathcal{S}_{r_{n}}(n) }}  \, \sum_{2 \le \ell, \ell' \le 2\gamma\ln n}   \, \sum_{q \ge 2}\, 
  \sum_{\substack{\bm{\ell}=(\ell_{i})_{i=1}^{q} \\ |\bm{\ell}| \le \ell \wedge \ell'}}
  \left(\frac{\lambda}{n}\right)^{\ell+\ell'- |\bm{\ell}| } n^{\ell-1}  2^{q-2} (2\gamma\ln n)^{2(q-2)} 
  \times n^{\ell'-1- |\bm{\ell}|-q + 2}   \\ 
 & \quad \times  \E \bigg[  \mathsf{F} \Big(   \ell- |   \bm{\ell} |, n, \sum_{i=1}^{q} S^{(i)}_{\ell_{i} } + \mathsf{X}_{u,v}   \Big)    \mathsf{F}\Big(  \ell- |   \bm{\ell} |, n,   \sum_{i=1}^{q} S^{(i)}_{\ell_{i} } + \mathsf{X}_{u',v'}   \Big) \ind{  \mathrm{Con}_{q}^{\checkmark,\checkmark} } \bigg]  . 
\end{align} 
where $
  \mathrm{Con}_{q}^{\checkmark,\checkmark}$ is defined to be the intersection of the events $ \{   S^{(1)}_{\ell_{1}} + X([1,u]) \ge \frac{ s^{*}}{2}(r_{n}+\ell_{1})\} $,  $
  \cap_{i=2}^{q-1} \{  S^{(i)}_{\ell_{i}} \ge - \frac{1}{\alpha'} \ln n  \} $ and $ \{   S^{(1)}_{\ell_{q}} + X([v,n]) \ge \frac{ s^{*}}{2}(r_{n}+\ell_{q}) \} $. 
Following the previous calculation  it can be verified that
\begin{equation}
  \Sigma_{\eqref{eq-sum-TT}} \lesssim   e^{2 \alpha x} e^{-2  \alpha s^{*} r_{n} } W_{r_{n}} \tilde{W}_{r_{n}} .
\end{equation}
Hence, as $n \to \infty$, $ \Sigma_{\eqref{eq-sum-TT}} \ind{\mathsf{G}_{n}}$  converges to zero in probability. 

\medskip

\underline{\textit{Final step}}. Recall the definition of $\Sigma_{\eqref{correlation-2}}(n,r_{n};R)$, and that below \eqref{eq-union-4},  we have decomposed $\mathcal{N}_{p}$ as the disjoint union of $\mathcal{N}^{\times , \times}_{p}$, $\mathcal{N}^{\checkmark , \times}_{p}$, $\mathcal{N}^{\times,\checkmark}_{p}$, and $\mathcal{N}^{\checkmark , \checkmark}_{p}$. Therefore, we have  
\begin{equation}
  \Sigma_{\eqref{correlation-2}}(n,r_{n};R) \le \Sigma_{\eqref{eq-sum-FF}} +  \Sigma_{\eqref{eq-sum-TF}} +  \Sigma_{\eqref{eq-sum-FT}}+\Sigma_{\eqref{eq-sum-TT}} . 
\end{equation}
The desired result then follows directly from the conclusions established in Cases 1–3. This completes the proof.
\end{proof}

\appendix

\section{Proof of Lemma \ref{Stein_method}}
\label{app1}

\begin{proof}[Proof of Lemma \ref{Stein_method}]
  It suffices to prove for the unconditional measure $\mathbb{P}$ and apply to the conditional probability measure $\mathbb{P}_{\mathcal{F}}$. 

According to the classical Stein lemma (see e.g. \cite[Lemma 4.2]{Ross11}), for each $A \subset \mathbb{N}$ with function $f_{\lambda,A}(n) \coloneq  \ind{n \in A}-  \sum_{k \in A} \frac{\lambda^k}{k!} e^{-\lambda} $, the Stein equation   
\begin{equation}
 \lambda g (n+1)- n g(n) = f_{\lambda,A}(n) \quad \forall  n \ge 0 
\end{equation}
has a unique solution $g_{\lambda,A}(n) = - \frac{(n-1)!}{\lambda^n} \sum_{k=n}^{\infty} \frac{\lambda^k}{k!} f_{\lambda,A} (k)  $ for $n \ge 1$ and $g_{\lambda,A}(0)=1$. Moreover,  according to Lemma \cite[Lemma 4.4]{Ross11}, the function $g_{\lambda,A}$ satisfies 
  \begin{equation}\label{stein-solution}
    \sup_{n \ge 0} \, \left| g_{\lambda,A}(n+1)- g_{\lambda,A}(n) \right| \le \min \left\{ 1, \frac{1}{\lambda} \right\}  \ , \  \sup_{n \ge 0} \, g_{\lambda,A} (n) \le 1 .
  \end{equation} 
Then we have 
  \begin{align}
      & \mathrm{d}_{\mathrm{TV}}(\chi Y, \mathrm{Poi}(\lambda) ) = \sup_{g=g_{\lambda,A} } \Big| \E\left[  \chi Y g(\chi Y) \right] - \lambda\E\left[   g(\chi  Y +1)   \right] \Big|  \\
      & \le
      \sup_{g=g_{\lambda,A}}  \underbrace{\left|  \E\left[ Y g( Y)   \chi \right] -  \lambda \E\left[ g( Y+1)  \chi \right]    \right| }_{\Sigma_{\eqref{eq-diff}} (g)}+    
      \sup_{g=g_{\lambda,A}}  \lambda g(1) \P\left(     \chi =0 \right)     \label{eq-diff}\\
      & =: \sup_{g=g_{\lambda,A}} \Sigma_{\eqref{eq-diff}}(g) +   \P\left(     \chi =0 \right) .
  \end{align}
  Above, we have used $\lambda g(1) \le 1$ by   \eqref{stein-solution}. 
  For brevity let us denote $p_{i}\coloneq \P (X_{i}=1)=\E[X_i]$,  $Y_{i}\coloneq \sum_{j \in N_{i}} X_{j}$ and $Z_{i}\coloneq \sum_{j \notin \bar{N}_{i}} X_{j}$.  Then  on the one hand,  we have 
  \begin{align}
  \E \left[  \lambda g( Y +1)  \chi \right] = \sum_{i\in I } p_{i} \, \E \left[ g \left(  1+ X_{i} +Y_{i}+ Z_{i}  \right)\chi   \right]   .
  \end{align}  
On the other hand, there holds 
   \begin{align}
  & \E \left[ Y g( Y)   \chi \right] =  \sum_{i \in I}   \E \left[ X_{i} g \left(  X_{i} + Y_{i}+ Z_{i} \right)\chi \right]  =\sum_{i \in I}   \E \left[ X_{i} g \left(  1+ Y_{i}+ Z_{i} \right)\chi \right] \\
   &   =\sum_{i \in I}  p_{i}   \E \left[   g \left(1 + Y_{i}+ Z_{i} \right)\chi \right]   + \sum_{i \in I}    \E \left[ (X_{i}-p_{i}   )g \left(1+ Y_{i}+ Z_{i} \right)\chi \right] . 
  \end{align} 
Thus it follows that 
  \begin{align}
\Sigma_{\eqref{eq-diff}} 
&  \le \sum_{i\in I } p_{i} \, \E \left[ |g \left(  1+ X_{i} +Y_{i}+ Z_{i}  \right)-   g \left(1 + Y_{i}+ Z_{i} \right) | \chi   \right]   \\
& \quad + \sum_{i \in I}   \Big| \E \left[ (X_{i}-p_{i}   ) \, | g \left(1+ Y_{i}+ Z_{i} \right) - g \left(1+   Z_{i} \right) |\chi \right] \Big| \\
& \quad + \sum_{i \in I}   \Big| \E \left[ (X_{i}-p_{i}   )  g \left(1+   Z_{i} \right) \chi \right] \Big| .  
  \end{align}
To bound the first sum, we apply \eqref{stein-solution}, which gives  $|g \left(  1+ X_{i} +Y_{i}+ Z_{i}  \right)-   g \left(1 + Y_{i}+ Z_{i} \right) | \leq X_{i}$. By simply dropping the indicator function $\chi$, it follows that the first sum is bounded above by $\sum_{i \in I} p_i^2$.  
 Similarly, for the second one, 
since \eqref{stein-solution} yields  $|g \left(  1+ Y_{i}+ Z_{i}  \right)-   g \left(1  + Z_{i} \right) | \leq Y_{i}$ and $|X_{i}-p_{i}|\leq X_{i}+p_{i}$, it is bounded from above by 
 \begin{equation}
\sum_{i \in I}   \E \left[ (X_{i} + p_{i} ) Y_{i}\chi \right] \le  \sum_{i \in I} \sum_{j \in N_{i}} \left( p_{i} p_{j} +   \E \left[ X_{i}X_{j}  \chi \right] \right) .
 \end{equation}
 For the last sum, observing  that by that conditionally on $\mathcal{G}$, $X_{i}$ and $Z_{i}$ are independent,  we have $ \E \left[ (X_{i}-p_{i}   )  g \left(1+   Z_{i} \right) \right] =\E \left[ X_{i}-p_{i}   \right]  \E \left[   g \left(1+   Z_{i} \right) \right] = 0 . $ Since $|X_{i}-p_{i}| \le 1$ and $ \| g\|_{\infty} \le 1$ by \eqref{stein-solution} we finally get   
\begin{equation}
  \Big| \E \left[ (X_{i}-p_{i}   )  g \left(1+   Z_{i} \right) \ind{\chi=1} \right] \Big| 
   =  \Big| \E \left[ (X_{i}-p_{i}   )  g \left(1+   Z_{i} \right) \ind{\chi=0} \right] \Big|  
  \le   \P(\chi=0). 
\end{equation} 
This completes the proof.
\end{proof}

\section*{Acknowledgement}

We thank Omer Angel for a stimulating discussion.  
This material is based upon work supported by the National Science
Foundation under Grant No.~DMS-1928930, while the authors were in
residence at the Simons Laufer Mathematical Sciences Institute in
Berkeley, California, during the Spring 2025 semester.
Heng Ma  is supported in part   from a Lady Davis Fellowship at the Technion and further acknowledges partial support, during his visit to SLMath, from the National Key R\&D Program of China (No.~2023YFA1010103) and the NSFC Key Program (Project No.~12231002). 
Pascal Maillard further acknowledges partial support from Institut Universitaire de France, the MITI interdisciplinary program 80PRIME GEx-MBB and the ANR MBAP-P (ANR-24-CE40-1833) project.
 
 
 \bibliographystyle{alpha}
 \bibliography{biblio}

 \end{document}